\newtheorem*{teo*}{Theorem}
\newtheorem{teo}{Theorem}[section]
\newtheorem{lema}[teo]{Lemma}
\newtheorem{prop}[teo]{Proposition}
\newtheorem{cor}[teo]{Corollary}
\newtheorem*{cor*}{Corollary}
\newtheorem*{conjec*}{Conjecture}
\theoremstyle{definition}
\newtheorem{definition}[teo]{Definition}
\newtheorem{example}[teo]{Example}
\theoremstyle{remark}
\newtheorem{remark}[teo]{Remark}
\numberwithin{equation}{section}
\newcommand{\md}{\operatorname{mod}}
\newcommand{\add}{\operatorname{add}}
\newcommand{\Coh}{\operatorname{Coh}}
\newcommand{\Hom}{\operatorname{Hom}}
\newcommand{\Ext}{\operatorname{Ext}}
\newcommand{\End}{\operatorname{End}}
\newcommand{\gl}{\operatorname{gldim}}
\newcommand{\glf}{\operatorname{sgldim}}
\newcommand{\dpr}{\operatorname{pd}}
\newcommand{\din}{\operatorname{id}}
\begin{document}

\title{Piecewise Hereditary Incidence Algebras}
\thanks{The first named author has been supported by the tematic project of Fapesp 2014/09310-5. The second named author acknowledges support from CAPES, in the form of a PhD Scholarship, PhD made at programa de Matem\'atica, IME-USP, (Brazil).}


\author{Eduardo do N. Marcos}
\address{}
\curraddr{Dto de Matem\'atica, Instituto de Matem\'atica e Estat\'istica, Universidade S\~ao Paulo, Rua do Mat\~ao 1010, Cidade Universit\'aria, CEP 05508-090, S\~ao Paulo-SP, Brasil}
\email{enmarcos@ime.usp.br}
\thanks{}

\author{Marcelo Moreira}
\address{}
\curraddr{Dto de Matem\'atica, 
Instituto de Ci\^encias Exatas, Universidade Federal de Alfenas, Campus Sede, 
Rua Gabriel Monteiro da Silva 700, Centro, CEP 37130-001, Alfenas-MG, Brasil}
\email{marcelo.moreira@unifal-mg.edu.br}
\thanks{}

\subjclass[2010]{Primary 13D05 13D09 14F05 16D10 16E10 16E35 16G20}

\date{}

\begin{abstract}
Let $K\Delta$ be the incidence algebra associated with a finite poset $(\Delta,\preceq)$ over the algebraically closed field $K$. We present a study of incidence algebras $K\Delta$ that are piecewise hereditary, which we denominate PHI algebras. We investigate the strong global dimension, the simply conectedeness and the one-point extension algebras over a PHI algebras. 

 We also give a positive answer to the so-called Skowro\'nski problem for $K\Delta$ a PHI algebra  which is not of wild quiver type. That is for this kind of algebra we 
 show that  $HH^1(K\Delta)$ is trivial if, and only if, $K\Delta$ is a simply connected algebra. 
We determine an upper bound for the strong global dimension of PHI algebras; furthermore, we extend this result to sincere algebras proving that the strong global dimension of a sincere piecewise hereditary algebra is less or equal than three.

\end{abstract}

\maketitle


\section{Introduction}

Throughout the paper, $K$ denotes an algebraically closed field. All algebra will be  finite dimensional basic associative $K$-algebra. Using Gabriel's theorem we will assume all algebras to be of the form $KQ/I$, where $Q$ is a finite quiver and $I$ is an admissible ideal. All modules will be finite dimensional right module.

An algebra $A=KQ/I$ can be considered as a small $K$-category where of objects are the vertices of the quiver, given two vertices $v$, $w$ the set of homomorphism from $v$ to $w$ is 
the $K$-vector space $vAw$, composition is multiplication in $A$. So we can talk about subcategories, etc...

Incidence algebras were introduced in the mid-1960s as a natural way of studying some combinatorial problems. In the representation theory of finite dimensional algebras, the incidence algebras have been the subject of many investigations (see, for instance, \cite{lad1}, \cite{red1}, \cite{ass-pla-red-tre} and \cite{ass-cas-mar-tre}). We will focus the study on 
incidence algebras $K\Delta$ associated with a finite poset $\Delta$ over $K$.
We remark that $K\Delta$ is the isomorphic to the algebra $KQ/I$, where the quiver $Q$ is the Hasse diagram and $I$ is the ideal generated by all commuting relations, i.e. the difference of any pair of parallel paths are in $I$.

Our purpose is to study incidence algebras which are piecewise hereditary, we call them PHI algebras, \emph{piecewise hereditary 
incidence algebras}. We investigate the \textit{strong global dimension}, the \emph{simply conectedeness} and the \emph{one-point extension algebras} of PHI algebras. 

The paper is organized as follows. 
Section \ref{sec2} is devoted to fixing the notation and briefly recalling the necessary concepts and results about 
incidence algebras and piecewise hereditary algebras. Section \ref{sec3} is dedicated to study the simply conectedeness of 
PHI algebras in order to solve the so called Skowro\'nski problem:
\begin{quotation}
Is $A$ simply connected if and only if $HH^1(A)=0$?
\end{quotation}

Let $\mathcal{A}$ and $\mathcal{B}$ be an abelian categories. In this paper the notation $\mathcal{A} \cong' \mathcal{B}$ means that $\mathcal{A}$ is derived equivalently to $\mathcal{B}$, that is $D^b(\mathcal{A})$ and $D^b(\mathcal{B})$ are equivalent as triangulated categories, we also use the notation 
$D^b (A)$ for the category
$D^b(\md A)$, where $\md A$ denotes the category of finitely generated modules over a finite dimensional algebra $A$.

For PHI algebras we show that  the answer for Skowro\'nsky's question is positive if the PHI algebra is not of
the  quiver type for a wild quiver. We conjecture that the restrictive hypothesis on the former statements is not necessary. 

\begin{teo*}
Let $K\Delta$ be a PHI algebra that is not of wild-quiver type. Then $HH^1(\!K\Delta\!)\!$ $=0$, if and only if 
the algebra $K\Delta$ is simply connected.
\end{teo*}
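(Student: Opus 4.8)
The two implications are of quite different difficulty, and I would establish them separately. The implication ``simply connected $\Rightarrow HH^1(K\Delta)=0$'' needs neither piecewise heredity nor the no-wild hypothesis. The Gabriel quiver of $K\Delta$ is the Hasse quiver $Q$ of $\Delta$, and among its admissible presentations is the commutativity presentation $K\Delta\cong KQ/I_c$, whose fundamental group is well known to coincide with the topological fundamental group $\pi_1(|\Delta|)$ of the order complex of $\Delta$; and by the Gerstenhaber--Schack theorem (see also Cibils) one has $HH^1(K\Delta)\cong H^1(|\Delta|;K)$. If $K\Delta$ is simply connected, then in particular $\pi_1(Q,I_c)=1$, so $\pi_1(|\Delta|)=1$; by the Hurewicz theorem $H_1(|\Delta|;\mathbb Z)=0$, hence $H^1(|\Delta|;K)=0$, and therefore $HH^1(K\Delta)=0$.

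For the converse, suppose $HH^1(K\Delta)=0$. I would first identify the hereditary model: since $K\Delta$ is PHI there is a hereditary abelian category $\mathcal H$ with $\md(K\Delta)\cong'\mathcal H$, and since Hochschild cohomology is a derived invariant, $HH^1(\mathcal H)=0$. If $K\Delta$ is not of wild quiver type, $\mathcal H$ is $\md KQ'$ with $Q'$ of Dynkin or Euclidean type, or $\Coh\mathbb X$ for a weighted projective line $\mathbb X$ of domestic or tubular type; and the vanishing $HH^1(\mathcal H)=0$ excludes Euclidean type $\widetilde{\mathbb A}_n$ and the weighted projective lines with at most two weight points, because there $HH^1(\mathcal H)\neq 0$ --- for $\widetilde{\mathbb A}_n$ because the underlying graph of $Q'$ has a cycle, and for such $\mathbb X$ because it is derived equivalent to a tame hereditary algebra of type $\widetilde{\mathbb A}$. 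What remains is $\mathcal H=\md KQ'$ with $Q'$ a tree of Dynkin or of type $\widetilde{\mathbb D},\widetilde{\mathbb E}$, together with the tubular weighted projective lines. Here I would invoke the structural results of Section~\ref{sec3}: the posets $\Delta$ whose incidence algebra has one of these restricted derived types are classifiable, and for each of them one verifies directly that $|\Delta|$ is simply connected and --- using that incidence algebras are schurian and triangular, so that the fundamental group can be read off the commutativity presentation --- that $\pi_1(Q,I)=1$ for \emph{every} admissible presentation $KQ/I$ of $K\Delta$, i.e. that $K\Delta$ is simply connected.

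The main obstacle is precisely this last step. The hypothesis $HH^1(K\Delta)=0$ by itself only constrains the abelianization of $\pi_1(|\Delta|)$, so passing from there to $\pi_1=1$, and to all presentations, genuinely requires knowing that only finitely many, explicitly describable, derived types occur --- which is exactly what the no-wild hypothesis secures. In the wild case $HH^1(K\Delta)=0$ forces only that the derived type is a tree, and among the infinitely many wild trees one has neither a classification of the associated incidence algebras nor any way to rule out a presentation whose fundamental group is nontrivial yet invisible to $HH^1$; this is why the unrestricted statement is left as a conjecture. A subsidiary technical point, used throughout the last step, is the schurian reduction from all admissible presentations to the commutativity one, which I would settle using the available results on fundamental groups of schurian, and in particular incidence, algebras.
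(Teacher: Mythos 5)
Your first direction (simply connected $\Rightarrow HH^1=0$) is fine and close in spirit to what the paper does: the paper quotes De la Pe\~na--Saor\'in, $\Hom_{gr}(\pi_1(Q,I_v),K^+)\cong HH^1(K\Delta)$, while you go through Gerstenhaber--Schack and the order complex; either way this direction needs no hypothesis beyond being an incidence algebra. The problem is the converse, where your plan diverges from the paper and, as written, has a genuine gap. Your key step is: ``the posets $\Delta$ whose incidence algebra has one of these restricted derived types are classifiable, and for each of them one verifies directly that $|\Delta|$ is simply connected and that $\pi_1(Q,I)=1$ for every admissible presentation.'' No such classification is given, cited, or available in the literature (even for the tubular weight types $(2,3,6),(2,4,4),(3,3,3),(2,2,2,2)$ one only knows, by Ladkani, the existence of \emph{some} incidence algebras of each type, not a list of all of them), and ``one verifies directly'' is exactly the content of the theorem, not a proof of it. So the hard implication is asserted rather than established.

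There is also a concrete case omission in your reduction: ``not of wild quiver type'' does \emph{not} force the sheaf type to be domestic or tubular. Tubular and wild weighted projective lines are not of quiver type at all, so a PHI algebra of \emph{wild sheaf} type satisfies the hypothesis and must be covered; your case list drops it (and also the domestic lines with three weights, though those are derived equivalent to $\widetilde{\mathbb D}$, $\widetilde{\mathbb E}$ quivers). The paper avoids both problems by a different mechanism: Le Meur's theorem reduces the equivalence $HH^1(A)=0 \Leftrightarrow A$ simply connected to the connectedness of the tilting graph $\mathcal{K}_{\mathcal H}$ of the hereditary model, and that connectedness is supplied by Happel--Unger for tame quiver type (the only exception, $\widetilde{A}_{1,p}$, has $HH^1\neq 0$, so it is harmless), by Barot--Kussin--Lenzing for tubular type, and by Fu--Geng for any connected hereditary abelian category without nonzero projectives, which covers \emph{all} sheaf types, including wild sheaf type, in one stroke; Dynkin and tree-Euclidean quiver types are likewise covered. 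The wild \emph{quiver} case is excluded precisely because the connectedness of the tilting graph is not available there. If you want to salvage your approach, you would have to replace the ``classify and check'' step by an argument of this kind; as it stands the converse direction is not proved.
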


Section \ref{sec4} is dedicated to the study of the global dimension of PHI algebras. In particular, we show that the representation-finite PHI algebras have global dimension less or equal to two.

Skowro\'nski, Happel and Zacharia have introduced a new homological notion called the strong global dimension. Let $A$ be an algebra. The strong global dimension of $A$, $\glf A$, is defined to be the maximum of the width of indecomposable, minimal complexes in $C^b(\mathcal{P}_A)$. We use an alternative definition of the strong global dimension \cite{alv-lem-mar}. In \cite{ker-sko-yam-zac}, O. Kerner, A. Skowro\'{n}ski, K. Yamagata and D. Zacharia proved that the strong global dimension of a­ finite dimensional radical square zero algebra $A$ over an algebraically closed field is finite if and only if $A$ is piecewise hereditary. Later in \cite{hap-zac} Happel and Zacharia generalized the result showing that an algebras has finite strong global dimension if and only if it is piecewise hereditary. In section \ref{sec5} we determine an upper bound of the strong global dimension for sincere algebras piecewise hereditary algebras.
\begin{teo*}
The strong global dimension of any sincere and piecewise hereditary algebra $A$ is at most three.
\end{teo*}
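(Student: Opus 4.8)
\emph{Plan.} The argument takes place in a hereditary model. Since $A$ is piecewise hereditary, fix a hereditary abelian category $\mathcal H$ with $\md A\cong'\mathcal H$, realised by a triangle equivalence $F\colon D^{b}(\mathcal H)\xrightarrow{\ \sim\ }D^{b}(A)$ induced by a tilting object $T=\bigoplus_{j=1}^{n}T_{j}$ of $D^{b}(\mathcal H)$, so that $F(T_{j})=P_{j}$ is the $j$-th indecomposable projective and, for $U\in\mathcal H$, $H^{i}F(U)=\bigoplus_{j}\Ext^{\,i}_{D^{b}(\mathcal H)}(T_{j},U)$. Because $\mathcal H$ is hereditary, every indecomposable object of $D^{b}(\mathcal H)$ is a shift $Y[l]$ of an indecomposable $Y\in\mathcal H$; in particular each $T_{j}=T_{j}'[m_{j}]$ with $T_{j}'\in\mathcal H$ indecomposable, whence $H^{i}F(U)=\bigoplus_{j}\Ext^{\,i-m_{j}}_{\mathcal H}(T_{j}',U)$. \textbf{Step 1 (sincerity bounds the spread of $T$).} Choose a sincere indecomposable $A$-module $M$ and write $F^{-1}(M)=X[m]$ with $X\in\mathcal H$ indecomposable. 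Sincerity gives $\Hom_{A}(P_{j},M)\neq0$ for all $j$, hence $0\neq\Hom_{D^{b}(\mathcal H)}(T_{j},X[m])\cong\Ext^{\,m-m_{j}}_{\mathcal H}(T_{j}',X)$, and since $\mathcal H$ is hereditary this forces $m-m_{j}\in\{0,1\}$. Thus $m_{j}\in\{m-1,m\}$ for every $j$, and after a global shift we may assume $m_{j}\in\{0,1\}$: the tilting object $T$ occupies only two consecutive shifts of $\mathcal H$.

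\textbf{Step 2 (every $A$-module sits in a window of three shifts).} From the formula for $H^{i}F(U)$ and $m_{j}\in\{0,1\}$ one gets, for any $U\in\mathcal H$, that $F(U)$ has cohomology concentrated in degrees $0,1,2$; hence $F(U[l])=F(U)[l]$ has cohomology in degrees $-l,\dots,2-l$. If $W\in D^{b}(A)$ is a module and $F^{-1}(W)=Y[l]$, then the cohomology of $F(Y[l])$ lies in degree $0$ only, which is possible solely for $l\in\{0,1,2\}$ (otherwise all of it lies strictly above, or strictly below, degree $0$). Consequently every indecomposable $A$-module, in particular every simple $S_{j}$, corresponds under $F^{-1}$ to an object $\widetilde S_{j}:=F^{-1}(S_{j})$ whose cohomology is confined to the fixed window $\{-2,-1,0\}$ of three consecutive degrees.

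\textbf{Step 3 (computing $\glf A$).} By the description of the strong global dimension in \cite{alv-lem-mar}, for an indecomposable $W\in D^{b}(A)$ the minimal complex of projectives representing $W$ has a nonzero term in cohomological degree $-k$ exactly when $\Hom_{D^{b}(A)}(W,S_{j}[k])\neq0$ for some $j$; so $\glf A$ is the supremum over indecomposables $W$ of the length of the interval of such $k$. Transporting through $F$ and using that $\mathcal H$ is hereditary, $\Hom_{D^{b}(\mathcal H)}(Y[l],\widetilde S_{j}[k])$ is assembled from a $\Hom_{\mathcal H}$ between equal-degree cohomologies and an $\Ext^{1}_{\mathcal H}$ between consecutive-degree cohomologies of $Y[l]$ and $\widetilde S_{j}[k]$. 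Since $Y[l]$ has cohomology only in degree $-l$ and, by Step 2, $\widetilde S_{j}[k]$ has cohomology only in degrees $\{-2-k,-1-k,-k\}$, this group vanishes unless $k\in\{l-3,l-2,l-1,l\}$, independently of $j$ and of $Y$. Hence the minimal complex of every indecomposable $W$ is supported in at most four consecutive degrees, i.e.\ has width at most $3$, and therefore $\glf A\le3$.

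\textbf{Main obstacle.} The decisive content is Steps 1--2: recognising that sincerity is exactly the hypothesis that pins the tilting object into two consecutive shifts, and then propagating this, via the hereditariness of $\mathcal H$ and elementary cohomological-amplitude bookkeeping, to a three-shift window for \emph{all} $A$-modules; once that is in place, Step 3 is a formal consequence of the hereditary $\Hom$/$\Ext^{1}$ decomposition. In other words, Step 1 says that a sincere piecewise hereditary algebra is close to quasitilted, and it is this proximity, rather than the mere smallness of $\gl A$, that bounds $\glf A$. A minor point is that nothing is used about $\mathcal H$ beyond its being hereditary, so the cases $\mathcal H=\md H$ and $\mathcal H=\Coh\mathbb X$ require no separate treatment.
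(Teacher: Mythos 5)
Your proof is correct and follows essentially the same route as the paper: the heart of both arguments is your Step 1, where sincerity of $M$ forces all indecomposable summands of the tilting object realizing $A$ into two consecutive shifts $\mathcal{H}[m-1]$, $\mathcal{H}[m]$ of the hereditary category. The only difference is that where the paper then concludes $\glf A \leq 3$ by invoking the quoted results of Alvares--Le Meur--Marcos, you re-derive that implication by hand in Steps 2--3 (cohomological window for modules, then reading off the terms of the minimal projective complex from $\Hom_{D^b(A)}(W,S_j[k])$), which is valid up to a harmless shift-convention slip: the window of admissible $k$ is $\{l-2,\dots,l+1\}$ rather than $\{l-3,\dots,l\}$, but in either case four consecutive integers, so the bound $\glf A\leq 3$ stands.
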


We apply this result for PHI algebras and get the following corollary.

\begin{cor*} 
The strong global dimension of any PHI algebra is less or equal than $3$.
\end{cor*}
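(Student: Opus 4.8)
The corollary should follow almost immediately from the sincere theorem, so the work is really a reduction. First I would recall the standard fact that strong global dimension is a derived invariant only up to the class of the algebra, but more to the point, that it behaves well with respect to the full-subcategory structure coming from idempotents. Given a PHI algebra $A=K\Delta$, if $A$ is sincere as a module over itself --- which for a \emph{connected} incidence algebra is automatic, since the indecomposable projective $P = \bigoplus_{v\in\Delta} P_v$ visibly has every simple as a composition factor once $\Delta$ is connected --- then the sincere theorem gives $\glf A \le 3$ directly. So the only issue is the non-sincere / disconnected case.

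**Reduction to the connected, sincere case.** I would argue as follows. Write $\Delta = \Delta_1 \sqcup \cdots \sqcup \Delta_r$ for the connected components of the Hasse quiver; then $K\Delta \cong \prod_i K\Delta_i$ as algebras, and $\glf$ of a product is the max of the $\glf$ of the factors (an indecomposable minimal complex of projectives lives over a single block), so it suffices to treat each $K\Delta_i$. Each $K\Delta_i$ is again an incidence algebra of a connected poset, hence piecewise hereditary (being a direct factor of a piecewise hereditary algebra — or simply noting a block of a PHI algebra is a PHI algebra) and sincere over itself by the observation above. Now apply the sincere theorem to conclude $\glf K\Delta_i \le 3$, and therefore $\glf K\Delta \le 3$.

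**The subtle point.** The step that needs a little care is the claim that a connected incidence algebra is sincere \emph{as a left/right module over itself} in the sense required by the sincere theorem — i.e. that the relevant notion of ``sincere algebra'' is exactly ``there is a sincere indecomposable-free module'' or ``the regular module is sincere'', and that this is what connectedness buys us. I would make this explicit: for $K\Delta$ with $\Delta$ connected, the poset has, for any two vertices, a path between them in the Hasse diagram, and the projective at a maximal (or minimal) vertex already has support meeting several simples; summing over all vertices the regular module has all simples in its support, so $K\Delta$ is sincere. If the paper's definition of ``sincere algebra'' is instead phrased via a sincere indecomposable module, then one must observe that a piecewise hereditary algebra that is sincere in the ``regular module'' sense is covered, or invoke the directedness/tilting structure of PHI algebras to produce a sincere indecomposable; in either case no new input beyond Section~\ref{sec5} is needed. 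I expect the disconnected-to-connected reduction and the precise bookkeeping of which ``sincere'' is meant to be the only things worth spelling out; everything else is a direct citation of the preceding theorem.
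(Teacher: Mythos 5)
There is a genuine gap, and it sits exactly where you flagged ``the subtle point.'' The paper's notion of a sincere algebra (stated just before Proposition~\ref{sincero}) is the existence of a sincere \emph{indecomposable} module, and the proof of the theorem $\glf A\le 3$ for sincere piecewise hereditary $A$ really uses indecomposability: one writes $FM=M'[m]$ with $M'\in\mathcal{H}[0]$, i.e.\ the image of $M$ must be concentrated in a single shift $\mathcal{H}[m]$ of the hereditary heart, which holds because an indecomposable object of $D^b(\mathcal{H})$ lies in one $\mathcal{H}[m]$. Your substitute notion --- ``the regular module is sincere'' --- is vacuous: for \emph{every} finite dimensional algebra each simple is the top of some indecomposable projective, so $\bigoplus_v P_v$ is sincere, with or without connectedness; if that reading sufficed, the theorem would say that all piecewise hereditary algebras have $\glf\le 3$, which is false (Happel--Zacharia show $\glf$ takes arbitrarily large finite values on piecewise hereditary algebras). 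Also, calling $P=\bigoplus_{v\in\Delta}P_v$ ``the indecomposable projective'' is simply incorrect: it is decomposable whenever $\Delta$ has more than one point, and the decomposable module $FM$ could spread over several shifts, so the argument in the sincere theorem does not go through for it.

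What is actually needed --- and what the paper supplies as Proposition~\ref{sincero} --- is the canonical sincere module: the representation with $K$ at every vertex and the identity on every arrow, which is well defined over the incidence algebra (all commutativity relations are satisfied), visibly sincere, and indecomposable precisely because the Hasse quiver is connected ($\End M\cong K$ by propagating a nonzero scalar along any walk). Your proposal never produces such a module; the fallback you mention (``invoke the directedness/tilting structure of PHI algebras to produce a sincere indecomposable'') is not available in general, since representation-infinite PHI algebras need not be directed and directedness in any case does not manufacture sincere modules. The component-wise reduction you describe is harmless but unnecessary (the paper assumes connectedness throughout); once you insert the canonical sincere module, the rest of your argument --- cite the sincere theorem and conclude $\glf K\Delta\le 3$ --- coincides with the paper's.
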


Usually the PHI algebras are not of global dimension two, so this gives a large class of examples of algebras which have global dimension equal strong global dimension. In general it is hard to find classes of algebras where these two invariants are equal.

Let $p_1,\dots,p_n$ be a set of positive integers and let $\mathbb{X} = \mathbb{X}(p_1,\dotsc,p_n)$ be a weighted projective line of type $p_1,\dotsc,p_n$, in the sense of \cite{gei-len}. Let $\Coh \mathbb{X}$ be the category of coherent sheaves on $\mathbb{X}(p_1,\dotsc,p_n)$. 
For the PHI algebras $K\Delta \cong' \Coh \mathbb{X}$, in Section \ref{Phiasfeixe} we study the canonical sincere $K\Delta$-module $M$ and the one-point extension algebra $K\Delta[M]$. 
Let $A$ a representation-infinite quasi-tilted of domestic-sheaf type. We show that the canonical sincere $A$-module $M$ is exceptional. We conjecture that this module is always excepcional in the case of PHI algebras. This condition is necessary to create new PHI algebras of wild type as one-point extension algebra $K\Delta[M]$. 

\section{Preliminaries} \label{sec2}
In this Section, for the sake of completeness, we will recall some definitions. The reader should see the references for more detail.

We begin with the definition of incidence algebras. There are several equivalent ways of defining incidence algebras of finite posets, we give one of them below. 

\begin{definition}[incidence algebra]
Let $(\Delta,\preceq)$ be a poset with $n$ elements. The incidence algebra $K\Delta$ is a quotient of the path algebra of the following quiver $Q$. 
The set of vertices, $Q_0$, is in bijection with the elements of the poset $\Delta$ and the set of arrows $Q_1$ is defining by declaring that there is an arrow $\alpha$ from a vertex $a$ to a vertex $b$, whenever $a \preceq b$ and there is no $a \preceq c \preceq b$, with $c \neq a$ and $c \neq b$.
Let $I$ be the ideal generated by all commutativity relations $\gamma - \gamma'$, with $\gamma$ and $\gamma'$ parallel paths. 
The incidence algebra $K\Delta$ is $KQ/I$.
\end{definition}

The quiver $Q$ of the incidence algebra, in the former definition, is also called the Hasse quiver of the poset.

We are going to assume always that our incidence algebras are connected, that is the Hasse quiver is connected.

For more details in the subject of incidence algebras we refer to \cite{lou} and \cite{bau-vil}. 

We want to define next the notion of piecewise hereditary algebras. In order to do this we need to introduce, very briefly, some previous notions. 

Given an abelian category $\mathcal{A}$ we denote $D^b(\mathcal{A})$ its bounded derived category, as usual if $A$ is a $K$-algebra then $D^b(A)$ denotes the bounded derived category of $\md A$.
  
An abelian category $\mathcal{H}$ is called \emph{hereditary} if the extension groups $\Ext_\mathcal{H}^n (\!X,Y\!)$ are zero for all $n \geq 2$ for any pair of objects $X$ and $Y$ of $\mathcal{H}$.

\begin{remark}
All hereditary categories considered in this paper have splitting idempotents, finite dimension $\Hom$ spaces, and tilting object. See below the definition of tilting object.
\end{remark}

\begin{definition}[piecewise hereditary algebra]
We say that $A$ is piecewise hereditary algebra of type $\mathcal{H}$ if there exists a hereditary abelian category $\mathcal{H}$, with splitting idempotents, finite dimension $\Hom$ spaces,
such that $D^b(\mathcal{A})$ is triangle-equivalent to the bounded derived category $D^b(\mathcal{H})$.
\end{definition}

For more details in the subject of piecewise hereditary algebra we refer to \cite{hap-rei-sma}, \cite{che-kra}, \cite{hap-ric-scho}, \cite{hap2}, \cite{lad2}, \cite{len}, \cite{len-sko}, \cite{bar-len1}, \cite{bar-len2}, \cite{hug-koe-liu}, \cite{hap-zac2}, \cite{meu} and \cite{alv-lem-mar}.

The definition of tilting modules inspired the definition of \emph{tilting object} that follows:

\begin{definition}[tilting object]
Let $\mathcal{H}$ be a hereditary abelian $K$-category. An object $T \in \mathcal{H}$ is called tilting if
\begin{enumerate} [\itshape i)]
\item $\Ext^1_{\mathcal{H}} (T,T)=0$, and
\item for every $X \in \mathcal{H}$ the condition $\Ext^1_{\mathcal{H}} (T,X)=0=Hom_{\mathcal{H}}(T,X)$ implies that $X=0$.
\end{enumerate}
\end{definition}

Let $A$ piecewise hereditary algebra of type $\mathcal{H}$. It follows from Rickard's theorem \cite{ric}, the existence of a tilting object $T$ in $D^b(\mathcal{H})$ such that $A = \End T$. 

Given a sequence  $p_1,\dotsc,p_n$ of positive integers, $\mathbb{X}(p_1,\dotsc,p_n)$, will denote the  weighted projective line of type $p_1,\dotsc,p_n$, in the sense of \cite{gei-len}, and $\Coh \mathbb{X}$  the category of coherent sheaves over $\mathbb{X}(p_1,\dotsc,p_n)$. 
Let $Q$ be a finite, connected quiver without oriented cycles and let $KQ$ denote the path algebra of $Q$.
We state one of the most important theorems about piecewise hereditary algebras.

\begin{teo}[Happel \cite{hap2}]
Let $\mathcal{H}$ be an abelian hereditary connected $K$-catego\-ry with tilting object. Then $\mathcal{H}$ is derived equivalent to $\md KQ$ or derived equivalent to $\Coh \mathbb{X}$ for some weighted projective line $\mathbb{X}$.
\end{teo}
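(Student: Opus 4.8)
The plan is to pass to the bounded derived category, use the tilting object to install Serre duality, and then split the argument into two cases according to whether $\mathcal{H}$ has a nonzero projective object.

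First I would fix a tilting object $T \in \mathcal{H}$ and set $A = \End T$. Because $\mathcal{H}$ is hereditary, $\Ext^n_{\mathcal{H}}$ vanishes for $n \geq 2$, so $T$ is a tilting object in $D^b(\mathcal{H})$ and, by the result on endomorphism algebras of tilting objects in hereditary categories \cite{hap-rei-sma}, $A$ is finite dimensional with $\gl A \leq 2$; in particular $\mathcal{H}$ is $\Hom$- and $\Ext$-finite. By Rickard's theorem \cite{ric} the functor $\Hom_{D^b(\mathcal{H})}(T,-)$ induces a triangle equivalence $D^b(\mathcal{H}) \simeq D^b(A)$. Since $A$ has finite global dimension, $D^b(A)$ admits a Serre functor, i.e. Auslander--Reiten triangles exist; transporting it across the equivalence equips $D^b(\mathcal{H})$, and hence $\mathcal{H}$, with Serre duality. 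One also checks that $\mathcal{H}$ may be taken noetherian. Serre duality together with the noetherian and $\Ext$-finiteness conditions is the structural input that drives everything that follows.

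Next I would run the dichotomy on projective objects. Suppose first that $\mathcal{H}$ contains a nonzero projective object $P$. Using Serre duality together with the finiteness forced by the tilting object $T$, one shows that $\mathcal{H}$ has only finitely many isomorphism classes of indecomposable projectives $P_1,\dotsc,P_m$, that $P = \bigoplus_i P_i$ is a projective generator, and that $\End P$ is a connected finite dimensional hereditary algebra; as $K$ is algebraically closed this algebra is $KQ$ for a finite connected acyclic quiver $Q$, and $\Hom(P,-)$ yields an equivalence $\mathcal{H} \simeq \md KQ$. Dually, a nonzero injective object leads to the same alternative. This already produces the first case of the theorem, with $\mathcal{H}$ derived equivalent (indeed equivalent) to $\md KQ$.

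The hard case, and the main obstacle, is when $\mathcal{H}$ has neither nonzero projective nor nonzero injective objects. Here I would exploit the Serre functor $\tau$ to analyze the connected components of the Auslander--Reiten quiver of $\mathcal{H}$: these are stable tubes together with components on which $\tau$ acts freely (of type $\mathbb{Z}A_\infty^{\infty}$). The existence of a tilting object is very restrictive: it forces the underlying "genus" to be zero and permits only finitely many tubes of rank larger than one, of ranks $p_1,\dotsc,p_n$. Reconstructing the category from this data---introducing rank and degree functions, locating the structure sheaf and the grading shift, and recognizing the result as $\Coh \mathbb{X}(p_1,\dotsc,p_n)$---is the technical core. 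This step rests on the classification of connected $\Ext$-finite noetherian hereditary abelian $K$-categories with Serre duality due to Reiten and Van den Bergh: any such category admitting a tilting object is derived equivalent to $\md KQ$ or to $\Coh \mathbb{X}$ for a weighted projective line. Matching the no-projectives, no-injectives case against that classification eliminates every possibility except the weighted projective line, which completes the proof.
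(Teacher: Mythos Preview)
The paper does not prove this theorem at all: it is quoted as a result of Happel \cite{hap2} and used as background, with no proof or sketch given. There is therefore nothing in the paper to compare your proposal against.

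That said, your outline is a reasonable summary of how Happel's argument actually runs. The dichotomy on the existence of nonzero projectives, the use of Serre duality obtained by transporting the Serre functor of $D^b(A)$ across the Rickard equivalence, and the identification of the no-projectives case with $\Coh\mathbb{X}$ are the correct structural steps. One small comment: invoking the Reiten--Van den Bergh classification at the end is somewhat circular as an explanation of Happel's proof, since that classification is contemporaneous and in part relies on the same circle of ideas; Happel's original argument in the no-projectives case constructs rank and degree directly and identifies the category with $\Coh\mathbb{X}$ by hand rather than by appeal to an external classification. But since the paper only cites the result, none of this affects your comparison task.
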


An algebra $A$ is called a \emph{piecewise hereditary algebras of quiver type} (or of type $Q$) or \emph{of sheaf type} if $A \cong' KQ$ for some quiver $Q$ or $A \cong' \Coh \mathbb{X} $ for some weighted projective line $\mathbb{X}$, respectively.

Observe that an algebra can be, at the same time, of quiver and sheaf type.

\section{Simply connectedness} \label{sec3}
In 1993, in the article \cite{sko}, Skowro\'nski proposed the following:

\begin{quotation}
Describe classes of algebras for which is it true that if an algebra 
$A$ is in one of these classes then it is simply connected if and only if $HH^1(A)=0$?
\end{quotation}

Given a $K$-algebra $A$ we recall the definition of the first Hochschild cohomology group of $A$. This can also be done via a complex, defined by Hochschild, where all the cohomology groups are defined at the same time, but since we need only the first group, we decided to give an ad hoc definition.

Let $A$ a $K$-algebra then a derivation of $A$ is a $K$-linear endomorphism of $A$, $\alpha$ such that $\alpha(ab) =a\alpha(b) + \alpha(a)b$, for all pair of elements $a,b$ in $A$. The set of 
derivations, $Der(A)$ form a $K$-subspace of $\End_K (A)$.  Given an element $x \in A$ we can define a derivation, denoted by $\mathcal{A}dd_x$ using the formula $\mathcal{A}dd_x(a) = ax -xa$, for all $a \in A $, such derivation is called inner derivation. The set of inner derivations, $\mathcal{I}nn(A)$ form a subspace and the first Hochschild cohomology of $A$ is the quotient $Der(A) / \mathcal{I}nn(A)$. 

We will show, in this section, the following statement. In the statement we have a restriction which we believe it is not necessary, but we where not able to show
the result without this restriction. 
\begin{teo*}
Let $K\Delta$ be a PHI algebra, which is not of type a wild quiver,  then $HH^1(K\Delta)=0$ if and only if $K\Delta$ is simply connected.
\end{teo*}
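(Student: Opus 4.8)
The plan is to reduce the statement to the known solutions of the Skowroński problem for the two classes appearing in Happel's theorem: piecewise hereditary algebras of (tame or Dynkin) quiver type and piecewise hereditary algebras of sheaf type. Since $K\Delta$ is a PHI algebra, it is piecewise hereditary of type $\mathcal{H}$, and by Happel's theorem $\mathcal{H}$ is derived equivalent to $\md KQ$ for some finite connected acyclic quiver $Q$, or to $\Coh\mathbb{X}$ for some weighted projective line $\mathbb{X}$. The hypothesis excludes the case where $Q$ is wild, so we are left with: (i) $K\Delta \cong' KQ$ with $Q$ of Dynkin or extended Dynkin (tame) type, and (ii) $K\Delta \cong' \Coh\mathbb{X}$. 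One direction is general and easy: if $K\Delta$ is simply connected then $HH^1(K\Delta)=0$; this is classical for incidence algebras (simple connectedness in the sense of the fundamental group of the presentation vanishing forces the outer derivations to vanish), and I would simply cite it. The work is the converse.

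First I would treat the quiver-type case. If $K\Delta \cong' KQ$ with $Q$ representation-finite (Dynkin), then $K\Delta$ is a tilted algebra of Dynkin type, and for such algebras — indeed for all representation-finite incidence algebras — the equivalence ``$HH^1 = 0 \iff$ simply connected'' is already known (this is covered by results on representation-finite algebras and tilted algebras of Dynkin type appearing in the Skowroński-circle literature; for incidence algebras it also follows from the combinatorial description of $HH^1(K\Delta)$ in terms of the Hasse quiver). If $Q$ is tame (extended Dynkin), then $K\Delta$ is a tilted algebra of tame type, or more generally one controlled by the trichotomy tilted/tubular; here again the Skowroński problem has a positive answer for tilted and tubular algebras, so I would invoke that.

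Next, the sheaf-type case $K\Delta \cong' \Coh\mathbb{X}$. Here $K\Delta$ is a quasi-tilted algebra of canonical (sheaf) type. The key point is to compute $HH^1(K\Delta)$ using derived invariance: $HH^1$ is a derived invariant, so $HH^1(K\Delta) \cong HH^1$ of any canonical algebra $C$ with $C \cong' \Coh\mathbb{X}$, and the Hochschild cohomology of canonical algebras is known explicitly (it is generally nonzero, reflecting the one-parameter families / the structure of $\mathbb{X}$). On the other hand, an incidence algebra derived equivalent to $\Coh\mathbb{X}$ has a Hasse quiver with many commutativity relations, and I would argue that such a $K\Delta$ can never be simply connected when it is of genuine sheaf type — or, dually, that whenever $HH^1(K\Delta)=0$ the category $\Coh\mathbb{X}$ must in fact be of a degenerate form forcing $K\Delta$ to actually be of Dynkin quiver type, landing us back in case (i). So the real content here is a dichotomy: either $HH^1(K\Delta)\neq 0$, or the weighted projective line is ``trivial enough'' that $K\Delta$ is tilted of Dynkin type and simply connected.

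The main obstacle I anticipate is the sheaf-type analysis: controlling exactly which incidence algebras are derived equivalent to $\Coh\mathbb{X}$ and showing that $HH^1$ vanishing is incompatible with being genuinely of sheaf type (equivalently, pinning down that the only PHI algebras of sheaf type with $HH^1=0$ are the ones that are simultaneously of Dynkin quiver type). This requires a careful use of the classification of tame and tubular quasi-tilted algebras together with the combinatorics of Hasse quivers of posets, and it is where the ``not of wild quiver type'' hypothesis is genuinely used — in the wild-sheaf and wild-quiver regimes the Hochschild cohomology computations and the simple-connectedness criteria are not available in the same clean form. I would organize the proof so that the easy direction and the Dynkin subcase are dispatched quickly by citation, and then spend the bulk of the argument on the tame/tubular and sheaf cases, reducing each to an already-solved instance of the Skowroński problem via derived invariance of $HH^1$ and of simple connectedness-type data.
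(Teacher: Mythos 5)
Your easy direction matches the paper: for incidence algebras the isomorphism $\Hom_{gr}(\pi_1(Q,I_v),K^+)\cong HH^1(K\Delta)$ of De la Pe\~na--Saor\'in gives ``simply connected $\Rightarrow HH^1=0$'' with no piecewise hereditary hypothesis. The converse, however, contains a genuine gap, and it is exactly in the sheaf-type case that you locate the ``real content.'' Your plan there rests on the claim that $HH^1$ of a canonical algebra is ``generally nonzero,'' so that either $HH^1(K\Delta)\neq 0$ or the weighted projective line degenerates and $K\Delta$ is really of Dynkin quiver type. This dichotomy is false. For canonical algebras with at least three weights one has $HH^1=0$ (the parameters $\lambda$ of the weighted projective line appear in $HH^2$, not $HH^1$), and by Ladkani's theorem, quoted in the paper, for every weight type $(p_1,p_2,p_3)$ --- including tubular and wild-sheaf types --- there exist incidence algebras derived equivalent to $\Coh\mathbb{X}$. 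Such a $K\Delta$ is genuinely of sheaf type (not derived equivalent to a Dynkin path algebra), has $HH^1(K\Delta)=0$ by derived invariance, and the whole point of the theorem is to prove that it \emph{is} simply connected; your argument would instead try to show this situation cannot occur, so the crucial case is never proved.

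The paper's route is different and is what you are missing: it invokes Le Meur's theorem, which gives the equivalence $HH^1(A)=0\iff A$ simply connected whenever $A$ is derived equivalent to a hereditary abelian category $\mathcal{H}$ whose tilting graph $\mathcal{K}_\mathcal{H}$ is connected, and then reduces everything to connectedness of that graph: Fu--Geng for any connected hereditary category without nonzero projective objects (hence for every $\Coh\mathbb{X}$, covering domestic, tubular and wild sheaf type), Barot--Kussin--Lenzing for the tubular case, and Happel--Unger for tame quiver type, where the single exception $Q=\widetilde{A}_{1,p}$ is disposed of by noting $HH^1(K\widetilde{A}_{1,p})\cong K\neq 0$, so it cannot arise under the hypothesis $HH^1(K\Delta)=0$ (your write-up never addresses this exceptional tame case either). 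Your quiver-type paragraph, which appeals to known positive answers to the Skowro\'nski problem for tilted and tubular algebras, is also looser than needed: a PHI algebra of tame quiver type is only iterated tilted, not necessarily tilted, so those citations do not directly apply; the tilting-graph criterion is what actually closes that case. In short, the skeleton (split by Happel's trichotomy, cite the easy direction) agrees with the paper, but the decisive step --- simple connectedness in the sheaf-type and tame quiver-type cases via connectedness of the tilting graph and Le Meur's criterion --- is absent, and the substitute argument you propose would fail.
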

The implication $K\Delta$ is simply connected then $HH^1(K\Delta)=0$ has already been proved for incidence algebras in general, De la Pe\~na and Saor\'in showed the following result:
\begin{teo}[De la Pe\~na, Saor\'in \cite{pen-sao}]
Let $K\Delta$ be an incidence algebra and $(Q,I_v)$ a presentation of $K\Delta$. Then
$$
\Hom_{gr} (\pi_1 (Q , I_v ) , K^+) \cong HH^1 (K\Delta).
$$
\end{teo}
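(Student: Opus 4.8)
The plan is to exhibit an explicit isomorphism $\Phi\colon \Hom_{gr}(\pi_1(Q,I_v),K^+)\to HH^1(K\Delta)$ by producing, from a homomorphism on the fundamental group, a distinguished ``diagonal'' derivation, and then showing that every derivation is cohomologous to one of this form. Throughout write $A=K\Delta=KQ/I$, let $\{e_i\}_{i\in Q_0}$ be the complete set of primitive orthogonal idempotents attached to the vertices, and recall the two features of an incidence algebra that drive the argument: $A$ is \emph{schurian}, i.e. $\dim_K e_iAe_j\le 1$ for all $i,j$, and $A$ is directed, so that $e_iAe_i=Ke_i$. It is convenient to translate the left-hand side into combinatorial cocycle language. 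Writing $C^0=\{c\colon Q_0\to K\}$ and $C^1=\{\lambda\colon Q_1\to K\}$, I extend $\lambda$ to paths additively by $\lambda(\alpha_r\cdots\alpha_1)=\sum_k\lambda(\alpha_k)$, and let $Z^1_I\subseteq C^1$ be the subspace of those $\lambda$ with $\lambda(\gamma)=\lambda(\gamma')$ for every generating commutativity relation $\gamma-\gamma'\in I$. Fixing a base vertex (legitimate since $Q$ is connected) and a spanning tree of $Q$ gives the standard identification $\Hom_{gr}(\pi_1(Q,I_v),K^+)\cong Z^1_I/d(C^0)$, where $(dc)(\alpha)=c(t(\alpha))-c(s(\alpha))$. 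So it suffices to prove $HH^1(A)\cong Z^1_I/d(C^0)$.

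For the construction, to each $\lambda\in Z^1_I$ I would associate the $K$-linear map $\delta_\lambda$ determined on paths by $\delta_\lambda(p)=\lambda(p)\,p$. The condition $\lambda\in Z^1_I$ is exactly what is needed for $\delta_\lambda$ to be well defined on $A$ (parallel paths in a relation receive equal weight), and the additivity of $\lambda$ under concatenation makes $\delta_\lambda$ a derivation. This yields a linear map $Z^1_I\to Der(A)$; I would then check that it carries $d(C^0)$ into $\mathcal{I}nn(A)$, since for $x=\sum_i c_ie_i$ one computes $\mathcal{A}dd_x(\alpha)=(c_{t(\alpha)}-c_{s(\alpha)})\alpha$, i.e. $\mathcal{A}dd_x=\delta_{dc}$. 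Hence $\lambda\mapsto[\delta_\lambda]$ descends to a well-defined homomorphism $\Phi\colon Z^1_I/d(C^0)\to HH^1(A)$.

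The heart of the proof is the surjectivity and injectivity of $\Phi$, both of which reduce to a single normalization step. First I would show that every derivation $\delta$ is cohomologous to one vanishing on all idempotents: the idempotents span the separable subalgebra $E=\bigoplus_iKe_i$, so the restriction of $\delta$ to $E$ is an inner derivation of $A$, and subtracting the corresponding $\mathcal{A}dd_x$ gives a representative with $\delta(e_i)=0$ for all $i$. For such a normalized $\delta$ the relations $\delta(e_i)=0$ force $\delta(e_iAe_j)\subseteq e_iAe_j$; since $A$ is schurian each $e_iAe_j$ is at most one-dimensional, so $\delta$ acts there as a scalar, and in particular $\delta(\alpha)=\lambda(\alpha)\alpha$ for a well-defined $\lambda\in C^1$. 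Because $\delta$ is a derivation vanishing on idempotents it is determined on all paths by $\delta(p)=\lambda(p)p$, so $\delta=\delta_\lambda$, and well-definedness of $\delta$ on $A$ forces $\lambda\in Z^1_I$; this proves surjectivity. For injectivity I would observe that a normalized inner derivation $\mathcal{A}dd_x$ satisfies $\mathcal{A}dd_x(e_i)=0$, whence $x$ commutes with every $e_i$ and thus lies in $\bigoplus_i e_iAe_i=\bigoplus_iKe_i$, i.e. $x$ is diagonal; consequently a normalized derivation $\delta_\lambda$ is inner only if $\lambda=dc$ is a gradient. Combining these, $\delta\mapsto\lambda$ identifies normalized derivations modulo diagonal inner derivations with $Z^1_I/d(C^0)$, which is the desired isomorphism.

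I expect the main obstacle to be the normalization lemma together with the verification that it interacts correctly with inner derivations --- that is, controlling exactly which inner derivations survive the normalization $\delta(e_i)=0$ (the diagonal ones) so that no cohomology is lost or spuriously created. The schurian hypothesis is the essential structural input: it is what guarantees that a normalized derivation is forced to be diagonal, and hence that $\Phi$ is an isomorphism rather than merely a monomorphism, as happens for a general presentation. A secondary point requiring care is the bookkeeping in the identification $\Hom_{gr}(\pi_1(Q,I_v),K^+)\cong Z^1_I/d(C^0)$, in particular checking that the spanning-tree description is independent of the choices made and compatible with the grading recorded by the subscript $gr$.
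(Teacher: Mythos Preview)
The paper does not supply its own proof of this theorem: it is quoted from De la Pe\~na--Saor\'in \cite{pen-sao} and used as a black box, so there is nothing in the present paper to compare your argument against.

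That said, your outline is essentially the standard proof of this result and is correct in substance. The key steps---identifying $\Hom_{gr}(\pi_1(Q,I_v),K^+)$ with $Z^1_I/d(C^0)$, building the diagonal derivation $\delta_\lambda$, normalizing an arbitrary derivation so that it kills the idempotents via separability of $E=\bigoplus_i Ke_i$, and then invoking the schurian condition $\dim_K e_iAe_j\le 1$ to force $\delta(e_iAe_j)\subseteq e_iAe_j$ to be scalar---are exactly the ingredients used in the original argument (and in the closely related treatments by Assem--de la Pe\~na and by Farkas--Geiss--Green--Marcos). Your identification of the possible difficulty is also accurate: the only place one must be careful is in checking that, after normalization, the inner derivations that remain are precisely those coming from diagonal elements $x\in\bigoplus_i Ke_i$, which is what pins down the kernel as $d(C^0)$. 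For incidence algebras this goes through because $e_iAe_i=Ke_i$; in a general presentation the map $\Phi$ you build is only injective, as you note.
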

Here  $\Hom_{gr} (\pi_1 (Q , I_v ) , K^+)$ is the group of all group homomorphisms from the group $\pi_1$ to the additive group $K^{+}$, where $K^{+}$ denotes the additive group of the field. 

For a definition of the homotopy group $\pi_1(Q, I_v)$ see for instance \cite{pen-vil, pen-sao, far-gei-gre-mar}.

It should be noted that for the implication ``If $HH^1(K\Delta)$ $=0$ then $K\Delta$ is simply connected'' we need the hypothesis that $K\Delta$ is a PHI algebra. The following is a counterexample, showing that this is not valid, in general.
Consider the projective plane, whose triangulation has the simplicial complex with the fundamental group isomorphic to $\mathbb{Z}_2$. Knowing that the fundamental group of the simplicial complex is isomorphic to the fundamental group of poset $\Delta$ associated with this complex, applying the previous theorem we get:
$$
\Hom (\mathbb{Z}_2 , K^+) \cong HH^1 (K\Delta).
$$
Then $K\Delta$ is not simply connected but $HH^1 (K\Delta)=0$, if the characteristic of $K$ is not 2.

The main theorem is in article ``Topological invariants of piecewise hereditary algebras'' \cite{meu} with the following statement:
\begin{teo}[Le Meur \cite{meu}]
Let $A$ be a connected algebra derived equivalent to a hereditary abelian category $\mathcal{H}$ whose oriented graph $\mathcal{K}_\mathcal{H}$ of tilting objects is connected. The following are equivalent:
\begin{enumerate}[\itshape a)]
\item $HH^1(A) = 0$.
\item $A$ is simply connected.
\end{enumerate}
\end{teo}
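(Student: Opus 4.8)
The plan is to reduce the stated equivalence to two structural facts about the intrinsic fundamental group $\pi_1(A)$, defined through the universal connected Galois covering of $A$, so that by definition $A$ is simply connected exactly when $\pi_1(A)=1$. The two facts are: (i) a comparison isomorphism $HH^1(A)\cong \Hom(\pi_1(A),K^+)$, extending the de la Pe\~na--Saor\'in formula from incidence algebras to all piecewise hereditary algebras of the stated kind; and (ii) the freeness of the group $\pi_1(A)$. Granting both, the theorem is formal: for a free group $F$ of rank $r$ one has $\Hom(F,K^+)\cong (K^+)^{r}$, and over the nonzero field $K$ this group vanishes precisely when $r=0$, that is, when $F$ is trivial. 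Hence $HH^1(A)=0 \iff \Hom(\pi_1(A),K^+)=0 \iff \pi_1(A)=1$, which is condition b). This simultaneously yields both implications a)$\Rightarrow$b) and b)$\Rightarrow$a).

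First I would establish the comparison (i) by means of the Galois-covering formalism. Each connected grading of $A$ by an abelian group corresponds to a group homomorphism from $\pi_1(A)$ into that group, and the gradings valued in the additive group $K^+$ are exactly the ones recorded by $HH^1(A)$. The triangularity forced by the piecewise hereditary hypothesis guarantees that every class in $HH^1(A)$ is represented by such a grading and that distinct homomorphisms give distinct classes, so the natural map $\Hom(\pi_1(A),K^+)\to HH^1(A)$ is bijective. This is the conceptual content underlying the de la Pe\~na--Saor\'in isomorphism, here promoted to the derived-categorical setting, and it is what converts condition a) into the purely group-theoretic statement on $\pi_1(A)$.

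The heart of the argument is the freeness (ii), and this is exactly where the connectedness of the oriented graph $\mathcal{K}_\mathcal{H}$ of tilting objects is used. I would proceed in two stages. In the first stage I show that $\pi_1$ is preserved along the edges of $\mathcal{K}_\mathcal{H}$: a single tilt carries a Galois covering of $\End T$ to a Galois covering of the endomorphism algebra of the mutated tilting object, and I would check that this induces a canonical isomorphism of intrinsic fundamental groups compatible with the comparison maps of (i). Since $\mathcal{K}_\mathcal{H}$ is connected, this permits replacing $A=\End T$ by the endomorphism algebra of any convenient tilting object of $D^b(\mathcal{H})$ without altering $\pi_1$. In the second stage I invoke Happel's theorem: $\mathcal{H}$ is derived equivalent either to $\md KQ$ for a finite quiver $Q$ without oriented cycles, or to $\Coh \mathbb{X}$ for a weighted projective line. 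Choosing the tilting object whose endomorphism algebra is the hereditary algebra $KQ$ (respectively a canonical algebra in the sheaf case), I can compute $\pi_1$ directly: for $KQ$ there are no relations, so $\pi_1$ is the fundamental group of the underlying graph of $Q$, which is free of rank equal to $\rk H_1$ of that graph; in the canonical case one reads off a free group from the standard presentation. Transporting back along $\mathcal{K}_\mathcal{H}$ gives freeness of $\pi_1(A)$.

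The main obstacle is the first stage of the freeness argument, namely controlling $\pi_1$ under tilting. Whereas $HH^1$ is a derived invariant, the fundamental group is a priori attached to a presentation of the algebra, so one must prove that mutation of tilting objects induces a well-defined isomorphism on intrinsic fundamental groups and that this isomorphism intertwines the comparison maps to $HH^1$ of (i). This demands the full strength of the correspondence between Galois coverings in the module category and the geometry of tilting in $D^b(\mathcal{H})$, and it is precisely at this point that the hypothesis on $\mathcal{K}_\mathcal{H}$ is indispensable: without the connectedness of the tilting graph there is no path along which $A$ can be linked to a hereditary or canonical representative whose fundamental group is manifestly free.
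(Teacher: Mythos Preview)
The paper does not give its own proof of this statement. The theorem is quoted verbatim from Le~Meur's article \cite{meu} and used as a black box: immediately after stating it the authors write ``In order to use this we need the fact that the oriented graph of tilting objects \ldots\ is connected'' and then assemble outside results (Happel--Unger, Barot--Kussin--Lenzing, Fu--Geng) to verify that hypothesis in the cases they care about. There is therefore nothing in the paper to compare your argument against.

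As for the proposal itself, your two-step strategy---a comparison $HH^1(A)\cong \Hom(\pi_1(A),K^+)$ together with freeness of $\pi_1(A)$, the latter obtained by propagating along the connected tilting graph to a hereditary or canonical representative---is indeed the architecture of Le~Meur's proof, and you have correctly located the genuine difficulty in showing that $\pi_1$ is invariant under a single tilting mutation. Two cautions, however. First, the comparison isomorphism (i) is not a formality for arbitrary algebras: the de~la~Pe\~na--Saor\'in result is specific to incidence algebras, and your sentence ``the triangularity forced by the piecewise hereditary hypothesis guarantees that every class in $HH^1(A)$ is represented by such a grading'' hides real work---Le~Meur needs his notion of the intrinsic $\pi_1$ via maximal Galois coverings, and the surjectivity of $\Hom(\pi_1(A),K^+)\to HH^1(A)$ is established, not asserted. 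Second, in stage one you would also need to show that the universal Galois covering itself lifts through a tilt, not merely that some isomorphism of abstract groups exists; this is what makes the isomorphism canonical and compatible with (i). Your outline is the right shape, but both of these steps require the machinery developed in \cite{meu} rather than the short justifications you sketch.
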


In order to use this we need the fact that the oriented graph of tilting objects (defined below) is connected.

\begin{definition}[tilting graph \cite{hap-ung}]
The oriented graph $\mathcal{K}_\mathcal{H}$ of tilting objects of a category $\mathcal{H}$ has vertex set in bijection with the isoclasses of the tilting objects. Let $T$,$T'$ be  
non isomorphic tilting objects, there exist an arrow $T' \to T$ if $T'=U \oplus Y$, $T=U \oplus X$ with $X$,$Y$ are non-isomorphisms indecomposables and there is a short exact sequence
$$
0 \longrightarrow Y \longrightarrow \tilde{U} \longrightarrow X \longrightarrow 0
$$
with $\tilde{U} \in \add U$.
\end{definition}

When $\mathcal{H}$ is derived equivalent to a hereditary algebra which is not of wild type, Happel and Unger \cite{hap-ung} decided on  the connectedness of  $\mathcal{K}_\mathcal{H}$ with the following result:
\begin{teo}[Happel-Unger \cite{hap-ung}]
Let $KQ$ tame hereditary algebras. The $\mathcal{K}_{KQ}$ is connected if and only if $Q \neq \widetilde{A}_{1,p}$.
\end{teo}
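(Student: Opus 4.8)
The plan is to decide the connectedness of $\mathcal{K}_{KQ}$ from the shape of the module category of the tame hereditary algebra $H=KQ$. Write $n=\abs{Q_0}$; then $\md H$ is the additive union of the preprojective component $\mathcal{P}$, the preinjective component $\mathcal{I}$, and a $\mathbb{P}^1(K)$-family of stable tubes, exactly $t$ of which are non-homogeneous, with ranks $r_1,\dots,r_t$ satisfying $\sum_i(r_i-1)=n-2$; one checks that $t\geq 2$ holds for every tame type except $\widetilde{A}_{1,p}$ (where $t=1$, or $t=0$ and $H$ is the Kronecker algebra when $p=1$). I will use the standard classification of tilting $H$-modules: each has the form $T=T_+\oplus T_0\oplus T_-$ with $T_+$ preprojective, $T_-$ preinjective and $T_0$ regular, supported on the non-homogeneous tubes, the summands of $T_0$ inside a tube of rank $r$ forming an exceptional sequence of length at most $r-1$. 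Since every edge of $\mathcal{K}_{KQ}$ joins two tilting modules, everything reduces to deciding when any two tilting modules are linked by a chain of exchange sequences.

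For the implication $Q\not\cong\widetilde{A}_{1,p}\Rightarrow$ connected I would argue in three steps. (1) All \emph{preprojective} tilting modules lie in one component $\mathcal{C}_{\mathcal P}$: each is linked to $H=\bigoplus_i P_i$ by a chain of exchanges that successively move a summand towards the projectives (equivalently, by reflections of complete slices in $\mathcal{P}$); dually all preinjective tilting modules lie in a component $\mathcal{C}_{\mathcal I}$ containing $DH$. (2) Any tilting module with $T_0\neq 0$ can be pushed, by exchanging summands of $T_+$ and $T_-$ inside $\mathcal{P}$ and $\mathcal{I}$, to one whose regular part attains the maximal length $n-2$, so that $T_+$ and $T_-$ are then indecomposable; call these the \emph{maximal-regular} tilting modules. (3) When $t\geq 2$ the maximal-regular tilting modules are linked to one another and to both $\mathcal{C}_{\mathcal P}$ and $\mathcal{C}_{\mathcal I}$: exchanging the unique preprojective summand of a maximal-regular $T$ either moves it within $\mathcal{P}$ — and by step (1) eventually reaches $\mathcal{C}_{\mathcal P}$ — or replaces it by a regular summand in a \emph{second} non-homogeneous tube, and the presence of that second tube is exactly what keeps the outcome a tilting module; combined with rotations of the exceptional sequences inside the tubes this connects the whole maximal-regular set and attaches it to $\mathcal{C}_{\mathcal P}$ and, symmetrically, to $\mathcal{C}_{\mathcal I}$. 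With (1) and (2) this forces $\mathcal{C}_{\mathcal P}=\mathcal{C}_{\mathcal I}=\mathcal{K}_{KQ}$.

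For the converse, take $Q=\widetilde{A}_{1,p}$, so that there is at most one non-homogeneous tube. I would exhibit a function on tilting modules — concretely the sign of the defect of the non-regular boundary summand, equivalently whether $T$ has a preinjective summand — and check that it is constant along every edge of $\mathcal{K}_{KQ}$: with only one non-homogeneous tube the exchange that would let a preprojective summand escape into the regular part "on the preinjective side" fails to produce a tilting module. Since this function separates $H$ from $DH$, the graph is disconnected. The main obstacle — shared by both directions — is precisely this bookkeeping: determining which exchanges within a single tube, and between $\mathcal{P}$ or $\mathcal{I}$ and a tube, actually yield tilting modules, and then proving the claimed invariance for $\widetilde{A}_{1,p}$; this rests on the fine combinatorics of exceptional sequences in stable tubes and of the regular tilting modules over $\widetilde{A}_{p,q}$.
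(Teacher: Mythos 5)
The paper does not prove this statement: it is imported from Happel--Unger \cite{hap-ung}, so there is no internal proof to compare with, and your attempt has to stand on its own; as written it is a roadmap rather than a proof. The background you invoke is correct (shape of $\md KQ$ in the tame case, the bound $\sum_i(r_i-1)=n-2$ on regular rigid summands, $t\ge 2$ non-homogeneous tubes exactly when $Q\neq\widetilde{A}_{1,p}$, edges of $\mathcal{K}_{KQ}$ coming from the two complements of a sincere almost complete tilting module), but every step carrying the actual content is asserted rather than proved, and you concede this yourself: ``determining which exchanges actually yield tilting modules'' is not bookkeeping to be postponed, it \emph{is} the theorem. Concretely: in step (1) the reduction of an arbitrary preprojective tilting module to $H$ is only justified for slice modules (``reflections of complete slices''), while preprojective tilting modules need not be slices; in step (2) it is neither shown that exchanges confined to $T_+\oplus T_-$ can always inflate the regular part to length $n-2$, nor does the stated conclusion follow --- a tilting module with $n-2$ regular summands can perfectly well have two preprojective and no preinjective summands, so ``$T_+$ and $T_-$ indecomposable'' is false in general; and in step (3) the claim that exchanging the preprojective summand ``moves it within $\mathcal{P}$ and by step (1) eventually reaches $\mathcal{C}_{\mathcal P}$'' is a non sequitur, since a tilting module with nonzero regular part is never preprojective and step (1) says nothing about it --- the mechanism by which the regular summands are eventually exchanged away, which is exactly where the hypothesis $t\ge 2$ must be used, is never described.

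The converse has the same character: the assertion that ``$T$ has a preinjective summand'' (or the sign of the defect of the boundary summand --- these are not obviously the same invariant) is constant along every edge of $\mathcal{K}_{K\widetilde{A}_{1,p}}$ is precisely the disconnectedness you are trying to prove, and no argument is offered beyond ``the exchange fails to produce a tilting module''. For $p\ge 2$ there exist tilting modules mixing preprojective, regular and preinjective summands, so the invariance is a genuine computation with complements in the unique non-homogeneous tube, not a formality (only the Kronecker case is immediate, where the tilting modules are exactly $P_i\oplus P_{i+1}$ and $I_{j+1}\oplus I_j$). A further slip: tame hereditary algebras have no regular tilting modules at all --- your own bound $n-2$ rules them out --- so the ``fine combinatorics of regular tilting modules over $\widetilde{A}_{p,q}$'' you defer to does not exist as stated; what is actually needed is the combinatorics of regular rigid (partial tilting) modules in tubes and of complements of almost complete tilting modules, i.e.\ the content of Happel and Unger's paper. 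So the proposal identifies the right landscape but leaves the decisive steps unproved, and two of its intermediate claims are incorrect as formulated.
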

We observe that $HH^1 (K\widetilde{A}_{1,p}) \equiv \mathbb{K}$ then the PHI algebras of type $\widetilde{A}_{1,p}$ are not simply connected.

Barot, Kussin and Len\-zing \cite{bar-kus-len} proved the connectedness of $\mathcal{K}_\mathcal{H}$ provided that $\mathcal{H} \cong' \Coh \mathbb{X}$ for weighted projective line $\mathbb{X}$ of tubular type.

Recently on the work \cite{fu-gen}, Fu and Geng proved the following result:
\begin{teo}[Fu-Geng \cite{fu-gen}]
Let $\mathcal{H}$ be a connected hereditary abelian category over $K$. The tilting graph $\mathcal{K}_\mathcal{H}$ is connected provided that $\mathcal{H}$ does not contain nonzero projective objects.
\end{teo}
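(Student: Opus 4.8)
The plan is to reduce the statement to the case of categories of coherent sheaves on weighted projective lines and then to settle that case by type. First I would observe that if the connected hereditary abelian category $\mathcal{H}$ has no nonzero projective object, then $\mathcal{H}$ cannot be a module category $\md H$, because the indecomposable projective $H$-modules are honest projective objects of $\md H$. Since $\mathcal{H}$ is assumed to possess a tilting object, Happel's theorem together with the structure theory of quasi-tilted algebras \cite{hap-rei-sma} gives that $\mathcal{H}$ is equivalent to $\md H$ or to $\Coh\mathbb{X}$ for some weighted projective line $\mathbb{X}$; the first possibility is now excluded, so $\mathcal{H}\cong\Coh\mathbb{X}$. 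It therefore suffices to prove that $\mathcal{K}_{\Coh\mathbb{X}}$ is connected for every $\mathbb{X}$, and I would split this according to whether $\mathbb{X}$ is domestic, tubular or wild.

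The tubular case is exactly the theorem of Barot-Kussin-Lenzing \cite{bar-kus-len} quoted above, so nothing is needed there. For the domestic and wild cases one cannot simply appeal to Happel-Unger \cite{hap-ung}: the hereditary heart $\Coh\mathbb{X}$ and the module-category heart $\md KQ$ of a derived-equivalent tame quiver $Q$ have genuinely different tilting graphs --- for $Q=\widetilde{A}_{1,p}$ the latter is disconnected while the former must be connected --- so a direct argument inside $\Coh\mathbb{X}$ is required. The plan here is to use the exchange/mutation calculus of tilting objects together with Serre duality (Auslander-Reiten theory) in $\Coh\mathbb{X}$, and to show by induction on the total rank of a tilting object that (i) every tilting object is connected in $\mathcal{K}_{\Coh\mathbb{X}}$ to a tilting bundle, each summand lying in a tube being exchanged away, and (ii) every tilting bundle is connected to the canonical tilting bundle $T_{\mathrm{can}}=\bigoplus_{0\le\vec x\le\vec c}\mathcal{O}(\vec x)$ by mutations that decrease the rank, the line bundles being joined into a single ladder by exchange triangles generalizing $0\to\mathcal{O}(a)\to\mathcal{O}(a+1)^2\to\mathcal{O}(a+2)\to 0$ on $\mathbb{P}^1$. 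An equivalent bookkeeping passes to endomorphism algebras: $\End T_{\mathrm{can}}$ is a canonical algebra $\Lambda$, the tilting objects $T$ with $\Hom_{D^b}(T_{\mathrm{can}},T[i])=0$ for all $i\neq 0$ are precisely the tilting $\Lambda$-modules, tilting mutation in $\mathcal{H}$ restricts on this subgraph to tilting mutation over $\Lambda$, and one finishes by combining the known behaviour of tilting modules over canonical (concealed-canonical) algebras with a bridging lemma mutating an arbitrary tilting object of $\Coh\mathbb{X}$ into this subgraph.

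The step I expect to be the main obstacle is precisely the place where the hypothesis is used: showing that no such mutation process gets stuck, i.e. that every almost-complete partial tilting object of $\Coh\mathbb{X}$ admits the second complement needed to perform an exchange, and then arranging the exchanges so that a well-chosen complexity measure (the total rank, or $\dim\Hom(T_{\mathrm{can}},T)$ once $T$ lies in the $\Lambda$-module region) strictly decreases. The absence of projective objects is exactly what makes ``two complements'' available everywhere --- the feature that breaks down for $\md K\widetilde{A}_{1,p}$ and produces the disconnectedness there --- so the hypothesis enters essentially at this point; controlling the induction in the wild case, where there is no slope stratification to lean on, is the delicate part.
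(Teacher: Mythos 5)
This statement is quoted in the paper from \cite{fu-gen}; the paper gives no proof of it, so your attempt can only be measured against the cited source, and as it stands it is an outline rather than a proof. Your opening reduction is acceptable in spirit, but note that Happel's theorem \cite{hap2} only gives a \emph{derived} equivalence with $\md KQ$ or $\Coh \mathbb{X}$; to conclude that $\mathcal{H}$ itself is equivalent to $\Coh \mathbb{X}$ once projectives are absent you need the finer classification of hereditary abelian categories with tilting object up to equivalence (Happel--Reiten, or Lenzing \cite{len} in the noetherian setting), not the structure theory of quasi-tilted algebras in \cite{hap-rei-sma}. That is a citation issue, not a fatal one.

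The genuine gap is that the domestic and wild sheaf cases --- which are exactly the content of the theorem, since the tubular case is \cite{bar-kus-len} --- are left as a plan whose two pivotal claims are unproved and are of the same order of difficulty as the theorem itself. Claim (a), that every almost complete tilting object of $\Coh \mathbb{X}$ admits a second complement so that no exchange gets stuck, is a nontrivial theorem: for module categories the analogous statement fails precisely for non-sincere almost complete tilting modules (this is the source of the $\widetilde{A}_{1,p}$ disconnectedness in \cite{hap-ung}), and establishing the sheaf-side statement is essentially where the work of \cite{fu-gen} lies; their argument proceeds through cluster-category/exchange techniques rather than the rank induction you propose. Claim (b), that exchanges can be organized so that the total rank (or $\dim \Hom(T_{\mathrm{can}},T)$) strictly decreases until one reaches the canonical tilting bundle, is not substantiated and is doubtful as stated: the exchange sequences $0\to\mathcal{O}(a)\to\mathcal{O}(a+1)^{2}\to\mathcal{O}(a+2)\to 0$ are special to the projective line, $\Hom$ spaces between line bundles on a general $\mathbb{X}$ may vanish so the proposed ``ladder'' need not consist of exchange triangles, mutations of tilting bundles can raise rank, and in the wild case you give no termination argument at all (you yourself flag this as the main obstacle). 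Until (a) and (b) are proved, the proposal does not establish the statement; it only relocates it.
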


Therefore, together with the result of Le Meur \cite{meu}, we can statement:

\begin{teo} \label{h1-sc}
Let $K\Delta$ be a PHI algebra that is not of wild-quiver type. If $HH^1(K\Delta)=0$, then 
the algebra $K\Delta$ is simply connected.
\end{teo}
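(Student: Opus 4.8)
\textbf{Proof plan for Theorem \ref{h1-sc}.}

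The plan is to reduce the statement to Le Meur's theorem by verifying, in each of the two possible derived-equivalence types for a PHI algebra, that the hereditary abelian category $\mathcal{H}$ with $D^b(K\Delta)\cong D^b(\mathcal{H})$ can be chosen so that its tilting graph $\mathcal{K}_\mathcal{H}$ is connected. By Happel's theorem, $\mathcal{H}$ is derived equivalent either to $\md KQ$ for a finite connected acyclic quiver $Q$, or to $\Coh\mathbb{X}$ for some weighted projective line $\mathbb{X}$. First I would dispose of the sheaf case: $\Coh\mathbb{X}$ has no nonzero projective objects, so the Fu--Geng theorem applies directly and gives connectedness of $\mathcal{K}_{\Coh\mathbb{X}}$; then Le Meur's theorem yields the equivalence of $HH^1(K\Delta)=0$ and simple connectedness, and in particular the implication we want.

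Next I would treat the quiver case $K\Delta\cong' KQ$. By hypothesis $Q$ is not of wild type, so $Q$ is either Dynkin or Euclidean (extended Dynkin). If $Q$ is Dynkin, $KQ$ is representation-finite and the connectedness of the tilting graph is classical (Happel--Unger); if $Q$ is Euclidean, the Happel--Unger theorem quoted above gives connectedness of $\mathcal{K}_{KQ}$ precisely when $Q\neq\widetilde{A}_{1,p}$. So the one case to rule out is $Q=\widetilde{A}_{1,p}$. Here I would invoke the observation recorded just before the theorem: $HH^1(K\widetilde{A}_{1,p})\cong K\neq 0$, so the hypothesis $HH^1(K\Delta)=0$ is vacuous in that case and there is nothing to prove. (One could alternatively note that an incidence algebra derived equivalent to $K\widetilde{A}_{1,p}$ would have to realize the Kronecker-type quiver with commutativity relations, but the vanishing-of-$HH^1$ shortcut is cleaner.) In all remaining subcases $\mathcal{K}_\mathcal{H}$ is connected, so Le Meur's theorem applies and $HH^1(K\Delta)=0$ forces $K\Delta$ simply connected.

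The only genuine subtlety is making sure the dichotomy is exhaustive and that "not of wild-quiver type" is exactly the hypothesis needed to run it: a PHI algebra is by definition piecewise hereditary, hence of quiver type or of sheaf type; in the sheaf case Fu--Geng needs no extra assumption, and in the quiver case excluding wild quivers leaves only Dynkin and Euclidean diagrams, where the tilting graph is known to be connected except for $\widetilde{A}_{1,p}$, which is harmless for our implication. I expect the main obstacle — really the only place one must be careful — to be the bookkeeping around categories that are simultaneously of quiver type and of sheaf type (tubular cases), where one should simply pick the sheaf description and appeal to Fu--Geng (or to Barot--Kussin--Lenzing in the tubular case) rather than worrying about which hereditary model to use; once the right model is fixed, the argument is a direct concatenation of the cited theorems.
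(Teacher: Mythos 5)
Your proposal is correct and follows essentially the same route as the paper: the paper's proof is exactly the concatenation of Le Meur's theorem with the connectedness results of Fu--Geng (sheaf case, no projectives), Happel--Unger (tame quiver case away from $\widetilde{A}_{1,p}$), and Barot--Kussin--Lenzing (tubular), together with the observation that $HH^1(K\widetilde{A}_{1,p})\cong K$ makes that exceptional case vacuous for the stated implication. Your write-up merely makes the case analysis (Dynkin/Euclidean/sheaf) more explicit than the paper does, which is fine.
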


\section{Global dimension} \label{sec4}
The \emph{projective dimension} of an object $M$ in an abelian category $\mathcal{A}$ is by definition 
$$
\dpr_\mathcal{A} M = \sup \{d \in \mathbb{N} \: / \: \Ext_\mathcal{A}^{d} (M,M') \neq 0 \text{ for some } M' \}.
$$
and the \emph{global dimension} of $\mathcal{A}$ is the $\sup\{\dpr_\mathcal{A} M : M \text{ is an object in } \mathcal{A} \}$.

When we refer to the global dimension of algebras $A$, we are considering the global dimension of the category $\md A$. The piecewise hereditary algebras have finite global dimension. We mention the following result:

\begin{teo}[Happel-Reiten-Smal\o\ \cite{hap-rei-sma}]
Let $\mathcal{A}$ be a piecewise hereditary, abelian category with finite length and $n$ non-isomorphisms simples objects. Then the category 
$\mathcal{A}$ has global dimension less or equal to $n$. 
\end{teo}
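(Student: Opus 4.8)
The plan is to reduce $\gl\mathcal A$ to Yoneda extensions between simple objects, transport these extensions across the given derived equivalence into $D^b(\mathcal H)$, and then exploit the fact that simple objects become extremely rigid there because $\mathcal H$ is hereditary.

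First I would record a dévissage reduction. Since $\mathcal A$ has finite length, repeatedly feeding composition series into the long exact sequence for $\Ext_{\mathcal A}^{\bullet}$ in each variable shows that
\[
\gl \mathcal A \;=\; \sup\bigl\{\, k : \Ext^k_{\mathcal A}(S,S')\neq 0 \text{ for some simple objects } S, S' \,\bigr\},
\]
where as usual $\Ext^k_{\mathcal A}(S,S')\cong \Hom_{D^b(\mathcal A)}(S,S'[k])$; no a priori finiteness of $\gl\mathcal A$ is needed, it will fall out. We may also assume $\mathcal A$ is connected: otherwise $\mathcal A$ splits as a finite product of abelian categories, each again piecewise hereditary and with strictly fewer simples, and we induct on the number of simples.

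Now fix a triangle equivalence $F\colon D^b(\mathcal A)\to D^b(\mathcal H)$. The key observation is that a simple object $S_i$ is indecomposable in $\mathcal A$, hence $FS_i$ is indecomposable in $D^b(\mathcal H)$; but $\mathcal H$ is hereditary, so every object of $D^b(\mathcal H)$ is isomorphic to the direct sum of its shifted cohomologies, and therefore indecomposability forces $FS_i\cong H_i[-d_i]$ for a single indecomposable $H_i\in\mathcal H$ and a single integer $d_i$. Consequently
\[
\Ext^k_{\mathcal A}(S_i,S_j)\;\cong\;\Hom_{D^b(\mathcal H)}\!\bigl(H_i[-d_i],H_j[k-d_j]\bigr)\;\cong\;\Ext^{\,k+d_i-d_j}_{\mathcal H}(H_i,H_j),
\]
which vanishes unless $k+d_i-d_j\in\{0,1\}$, since $\mathcal H$ has no extensions in negative degrees and none in degrees $\geq 2$. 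Hence $\gl\mathcal A\le \bigl(\max_j d_j-\min_i d_i\bigr)+1$, and the whole problem reduces to bounding the spread of the integers $d_1,\dots,d_n$.

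The hard part — and the step I expect to be the main obstacle — is to show that $\{d_1,\dots,d_n\}$ consists of consecutive integers, so that it spans at most $n-1$. I would argue by contradiction: if an integer $r$ with $\min_i d_i<r<\max_i d_i$ is omitted, partition the simples as $U=\{i:d_i<r\}$ and $V=\{j:d_j>r\}$, both nonempty and covering all of them. For $i\in U$ and $j\in V$ one has $d_i\le r-1$ and $d_j\ge r+1$, so $1+d_i-d_j\le -1<0$ and $1+d_j-d_i\ge 3\ge 2$; by the displayed isomorphism both $\Ext^1_{\mathcal A}(S_i,S_j)$ and $\Ext^1_{\mathcal A}(S_j,S_i)$ then vanish. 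Thus no arrow of the Gabriel quiver of $\mathcal A$ joins a vertex of $U$ to a vertex of $V$, contradicting the connectedness of $\mathcal A$. With no gaps, $\{d_1,\dots,d_n\}$ is an interval with at most $n$ elements, so $\max_j d_j-\min_i d_i\le n-1$ and $\gl\mathcal A\le n$. Besides the no-gap argument, the points needing care are the dévissage in an abelian category that may lack projectives (only Yoneda $\Ext$ is available) and the degree bookkeeping in $D^b(\mathcal H)$; the conceptual core, however, is the rigidity $FS_i\cong H_i[-d_i]$ forced by hereditariness of $\mathcal H$.
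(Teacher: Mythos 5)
The paper offers no proof of this statement: it is quoted verbatim from Happel--Reiten--Smal\o\ \cite{hap-rei-sma}, so there is no internal argument to compare yours against. On its own merits your proof is correct, and it is essentially the standard argument for this result: simples are sent by the equivalence to stalk complexes $H_i[-d_i]$ because every object of $D^b(\mathcal{H})$ splits into its shifted cohomologies when $\mathcal{H}$ is hereditary, the isomorphism $\Ext^k_{\mathcal{A}}(S_i,S_j)\cong\Ext^{k+d_i-d_j}_{\mathcal{H}}(H_i,H_j)$ bounds $\gl\mathcal{A}$ by the spread of the $d_i$ plus one, and connectedness forces the $d_i$ to form an interval of at most $n$ consecutive integers. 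Two steps are asserted rather than proved, both standard but worth a word: (i) in the reduction to the connected case, that each factor of a product decomposition of $\mathcal{A}$ is again piecewise hereditary requires checking that the induced orthogonal decomposition of $D^b(\mathcal{H})$ restricts to a block decomposition $\mathcal{H}=\mathcal{H}_1\times\mathcal{H}_2$ with $D^b(\mathcal{A}_i)\cong D^b(\mathcal{H}_i)$; and (ii) the final contradiction uses the block-decomposition principle for length categories, namely that if the simples split into two nonempty sets with no $\Ext^1$ in either direction between them, then $\mathcal{A}$ decomposes as a product. With those routine facts supplied, the degree bookkeeping and the no-gap argument are exactly right.
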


We asked ourself the following question: Is there an upper bound for the global dimension of  PHI algebras? 

This question was answered by Ladkani \cite{lad2}.  To state the result of Ladkani we recall that an algebra is called sincere if it admits a 
sincere indecomposable module in its category of modules, that is, an indecomposable module such that every simple module is a composition factor of it. 
The following statement is a particular case of Ladkani's statement:

\begin{teo}\cite{lad2}
A sincere, piecewise hereditary algebra has global dimension less or equal to $3$.
\end{teo}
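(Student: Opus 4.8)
The plan is to transport everything to the derived category and then bound the global dimension by the spread of the cohomological shifts of the simple modules. Fix a triangle equivalence $F\colon D^b(A)\xrightarrow{\ \sim\ }D^b(\mathcal H)$ with $\mathcal H$ hereditary. The structural fact I would start from is that, $\mathcal H$ being hereditary (and Krull--Schmidt, as it is in all cases relevant here), every object of $D^b(\mathcal H)$ is isomorphic to the direct sum of the shifts of its own cohomologies; in particular the indecomposable objects of $D^b(\mathcal H)$ are exactly the stalk complexes $N[s]$ with $N\in\mathcal H$ indecomposable and $s\in\mathbb Z$. Since $F$ preserves indecomposability, for every indecomposable $A$-module $X$ there are an indecomposable $N_X\in\mathcal H$ and an integer $s_X$ with $F(X)\cong N_X[s_X]$. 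I will record these integers for the indecomposable projectives, $F(P_i)\cong N_i[s_i]$, and for the simple modules, $F(S_i)\cong N_i'[s_i']$.

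Next I would reduce $\gl A$ to the spread of the $s_i'$. Recall that $\gl A=\max\{\,j\ge0:\Ext_A^j(S_i,S_k)\neq0\ \text{for some}\ i,k\,\}$, and that for $X,Y\in\mathcal H$ one has $\Hom_{D^b(\mathcal H)}(X,Y[t])\cong\Ext_{\mathcal H}^t(X,Y)$, which vanishes unless $t\in\{0,1\}$ since $\mathcal H$ is hereditary. Carrying $\Ext_A^j(S_i,S_k)=\Hom_{D^b(A)}(S_i,S_k[j])$ through $F$ produces $\Hom_{D^b(\mathcal H)}\bigl(N_i',N_k'[\,s_k'-s_i'+j\,]\bigr)$, so $\Ext_A^j(S_i,S_k)\neq0$ forces $s_k'-s_i'+j\in\{0,1\}$, i.e.\ $j\le s_i'-s_k'+1$. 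Hence
\[
\gl A\ \le\ \bigl(\max_i s_i'-\min_i s_i'\bigr)+1 .
\]

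The decisive step — the one that uses the hypothesis — is to show that sincerity squeezes the $s_i'$ into three consecutive values. Choose a sincere indecomposable $A$-module $M$ and write $F(M)\cong N[s]$. Sincerity says $[M:S_i]\ge1$, i.e.\ $\Hom_A(P_i,M)\neq0$, for every $i$; transporting this through $F$ gives $\Hom_{D^b(\mathcal H)}\bigl(N_i,N[\,s-s_i\,]\bigr)\neq0$, whence $s-s_i\in\{0,1\}$, that is $s_i\in\{s-1,s\}$ for \emph{all} $i$ — so already the shifts of the projectives lie in a window of length one. Comparing now the shift of $S_i$ with that of $P_i$: from $\Hom_A(P_i,S_i)\cong K\neq0$ we get $\Hom_{D^b(\mathcal H)}\bigl(N_i,N_i'[\,s_i'-s_i\,]\bigr)\neq0$, hence $s_i'\in\{s_i,s_i+1\}\subseteq\{s-1,s,s+1\}$. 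Therefore $\max_i s_i'-\min_i s_i'\le2$, and the displayed inequality yields $\gl A\le3$.

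I expect the only genuinely delicate point to be the middle of the last paragraph: one must see that the mere existence of \emph{one} sincere indecomposable module already traps all the projective shifts $s_i$ in two consecutive integers, and this is exactly where the sincerity hypothesis is spent. Everything else is bookkeeping with the vanishing $\Ext_{\mathcal H}^{\ge2}=0$; notably, the argument never appeals to Happel's classification of $\mathcal H$ and is indifferent to whether $A$ is of quiver type or of sheaf type.
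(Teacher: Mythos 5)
Your proof is correct, and it shares its decisive step with the paper while finishing along a genuinely different route. The heart of your argument --- using the sincere \emph{indecomposable} module $M$, with $FM\cong N[s]$ a stalk complex, to force the shifts $s_i$ of the indecomposable projectives $FP_i\cong N_i[s_i]$ into the two consecutive values $\{s-1,s\}$ --- is exactly the step the paper performs in Section~\ref{sec5} in its proof that $\glf A\le 3$. Where you diverge is in what is done with that information: the paper feeds the fact that the tilting object $T=FA$ has all its summands in $\mathcal H[m-1]\vee\mathcal H[m]$ into the Alvares--Le Meur--Marcos description of the strong global dimension, concludes $\glf A\le 3$, and then obtains the present statement from the inequality $\gl A\le\glf A$; you instead stay elementary, comparing the shifts of the simples with those of the projectives via $\Hom_A(P_i,S_i)\neq 0$, and bounding $\gl A$ by the spread of the simple shifts plus one, using only $\Ext^{\ge 2}_{\mathcal H}=0$, the decomposition of objects of $D^b(\mathcal H)$ into shifted stalk complexes, and the identification $\Ext^j_A(S_i,S_k)\cong\Hom_{D^b(\mathcal H)}(FS_i,FS_k[j])$. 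Your route avoids the strong global dimension machinery (Happel--Zacharia, Alvares--Le Meur--Marcos) altogether, which is a real simplification for this particular statement and is closer in spirit to Ladkani's original argument; the paper's route costs those external results but buys the stronger conclusion $\glf A\le 3$, of which the theorem under review is then a corollary. One point you rightly rely on and should keep explicit: the reduction needs $M$ indecomposable so that $FM$ sits in a single shift --- which is consistent with the paper's definition of a sincere algebra as one admitting a sincere indecomposable module.
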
 

Connected incidence algebras are sincere, as we see next.

\begin{prop} \label{sincero}
Let $K\Delta = KQ_{K\Delta}/I$ be an incidence algebra. Then $K\Delta$ is a sincere algebra.
\end{prop}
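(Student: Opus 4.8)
The plan is to exhibit an explicit indecomposable sincere $K\Delta$-module. Since $\Delta$ is a finite connected poset, it has at least one minimal element; in fact the key observation is that the Hasse quiver $Q_{K\Delta}$ has a well-defined ``global" structure coming from the partial order. I would consider the module $M$ whose underlying representation assigns $K$ to every vertex of $Q_{K\Delta}$ and the identity map $\mathrm{id}_K$ to every arrow. One checks this is a well-defined $K\Delta$-module: the defining relations of $I$ are the commutativity relations $\gamma-\gamma'$ for parallel paths $\gamma,\gamma'$, and since every arrow acts as the identity, any path acts as the identity, so all commutativity relations are satisfied. Call this module $M$; by construction every simple $S_a$ (for $a\in\Delta$) occurs as a composition factor, since $\dim_K M_a = 1$ for all $a$, so $M$ is sincere.

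The remaining step is to show $M$ is indecomposable, equivalently that $\End_{K\Delta}(M)$ is local. An endomorphism $f\colon M\to M$ is a tuple $(f_a)_{a\in\Delta}$ with $f_a\in K$ such that for each arrow $\alpha\colon a\to b$ we have $f_b\cdot\mathrm{id}_K = \mathrm{id}_K\cdot f_a$, i.e. $f_a=f_b$. Since the Hasse quiver is connected (our standing assumption on incidence algebras), this forces all $f_a$ to be equal to a single scalar $\lambda\in K$, so $\End_{K\Delta}(M)\cong K$, which is local (indeed a field). Hence $M$ is indecomposable. Since $M$ is an indecomposable module admitting every simple as a composition factor, $K\Delta$ is sincere by definition.

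I do not anticipate a serious obstacle here; the only point requiring a little care is the verification that $M$ is genuinely a module over the \emph{quotient} $KQ/I$ rather than just over $KQ$, but this is immediate from the fact that all arrows act invertibly (as $\mathrm{id}_K$), so parallel paths act identically. One could phrase the whole argument more conceptually by saying $M$ corresponds, under the interpretation of $K\Delta$ as the $K$-linearization of the poset category, to the constant functor $\Delta\to\mathrm{Vect}_K$ with value $K$; connectedness of $\Delta$ then gives indecomposability for free. Either formulation yields the proposition.
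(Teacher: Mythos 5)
Your proposal is correct and follows essentially the same route as the paper: the same ``constant'' representation with $K$ at every vertex and $\mathrm{id}_K$ on every arrow, sincerity from the one-dimensional fibres, and indecomposability from $\End(M)\cong K$ via connectedness of the Hasse quiver. Your extra remark that the commutativity relations in $I$ are satisfied (so $M$ is genuinely a $KQ/I$-module) is a small verification the paper leaves implicit, but the argument is the same.
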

\begin{proof}
We need to show the existence of an indecomposable, sincere, module $M$ over $K\Delta$. The candidate $M$ is the  module associated with the 
following representation:
\begin{enumerate} [\itshape a)]
\item for each vertex $a$ in $Q_{K\Delta}$ we associate $K$;
\item for each arrow $\alpha \colon a \to b$ in $Q_{K\Delta}$ We associate the identity $1 \colon K \to K$. 
\end{enumerate}
First, we will show that $M$ is indecomposable. For this, we will study the $\End M$. We consider $f = (f_a)_{a \in Q_0}$ a non-zero morphism of $\End M$. Thus, there exists $f_a \colon K \to K $ not zero for some $a \in Q_0$, implying that $f_a$ is an isomorphism. Given an arrow $\alpha \colon a \to b $, we have that $f_a 1 = 1 f_b$. If the arrow is in the other direction, we get the same result. Thus, no matter the direction of the arrow, we 
conclude that $f_a = f_b$. So, since the graph is connected, we always have a walk connecting the vertex $a$ to any vertex $c$:
$$
\xymatrix@1{
a \ar@{-}[r] &\bullet \ar@{-}[r] &\dotsb \ar@{-}[r]& c
}
$$
By a finite process, we conclude that $f_a = f_c$ for all vertex $c$ of $Q_{K\Delta}$ and consequently $\End M \cong K $, therefore $M$ is an indecomposable module.

It is clear that $M$ is sincere. 

Therefore, $M$ is a sincere indecomposable module over an incidence algebra $K\Delta$. 
\end{proof} 

As a consequence we get the following corollary:

\begin{cor}
The global dimension of a PHI algebra is less or equal to $3$.
\end{cor}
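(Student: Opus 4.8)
The plan is straightforward: combine the two results that have just been established. By Proposition~\ref{sincero}, every connected incidence algebra $K\Delta$ is sincere. Now suppose $K\Delta$ is moreover a PHI algebra, that is, it is piecewise hereditary. Then $K\Delta$ is a sincere, piecewise hereditary algebra, and Ladkani's theorem (the result from \cite{lad2} quoted above) applies verbatim to give $\gl K\Delta \leq 3$.

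So the proof is essentially a one-line deduction: apply Proposition~\ref{sincero} to obtain sincerity, then feed this into Ladkani's bound for sincere piecewise hereditary algebras. I would write it as follows.

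\begin{proof}
By hypothesis, $K\Delta$ is a PHI algebra, hence in particular a piecewise hereditary algebra. By Proposition~\ref{sincero}, the incidence algebra $K\Delta$ is sincere. Therefore $K\Delta$ is a sincere piecewise hereditary algebra, and by Ladkani's theorem \cite{lad2} its global dimension is at most $3$.
\end{proof}

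There is no real obstacle here; the work has all been done in Proposition~\ref{sincero} and in the cited theorem of Ladkani. The only thing to be mildly careful about is the standing assumption, made in Section~\ref{sec2}, that incidence algebras are connected (so that the Hasse quiver is connected) — this connectivity is exactly what makes the module $M$ constructed in Proposition~\ref{sincero} indecomposable and hence what makes $K\Delta$ sincere. With that convention in force, the corollary follows immediately.
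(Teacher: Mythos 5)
Your proof is correct and matches the paper's intended argument exactly: the corollary is stated as an immediate consequence of Proposition~\ref{sincero} (connected incidence algebras are sincere) combined with Ladkani's bound for sincere piecewise hereditary algebras. Your remark about the standing connectedness assumption is also consistent with the paper's conventions.
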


\begin{definition}
We call the module $M$ in the proposition above the \emph{canonical sincere module}. 
\end{definition}

The representation-finite sincere algebras have global dimension less or equal to two. Before proving this 
affirmation, we need the definition of a directed module.

\begin{definition}[directed module]
A cycle in the module category is a sequence 
$$
M_0 \xrightarrow{f_1} M_1 \xrightarrow{f_2} M_2 \xrightarrow{f_3} \dotsc \rightarrow M_{t -1} \xrightarrow{f_t} M_t \cong M_0 \: ,
$$
where $t\geq 1$, each $M_i$ is indecomposable and each morphism  $f_i$ is non-zero, and non isomorphism.

Let $M$ be an indecomposable module, $M$ is called directed if it does not belong to any cycle.
\end{definition} 

Happel showed in the article ``On the derived category of a finite-dimen\-sional algebra'' the following result:
\begin{cor}[Happel \cite{hap3}] \label{Adirec}
Let $A$ be a representation-finite, piecewise hereditary algebra. Then $\md{A}$ is directed, that is, all indecomposable $A$-modules are directed. 
\end{cor}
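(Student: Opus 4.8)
The plan is to use representation-finiteness to force $A$ to be piecewise hereditary of Dynkin type, and then to transport the acyclicity of the Auslander--Reiten quiver of the bounded derived category of a hereditary algebra of Dynkin type through the embedding $\md A\hookrightarrow D^b(A)$.

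First I would reduce to the Dynkin case. By Happel's structure theorem \cite{hap2}, the hereditary category $\mathcal{H}$ with $D^b(A)\cong D^b(\mathcal{H})$ satisfies $\mathcal{H}\cong' \md KQ$ for some finite connected acyclic quiver $Q$, or $\mathcal{H}\cong' \Coh\mathbb{X}$ for some weighted projective line $\mathbb{X}$. If $Q$ is not Dynkin, or in the sheaf case, the algebra $A$ is an iterated tilted algebra of the corresponding (Euclidean, tubular or wild) type, and such algebras are representation-infinite; this is well known (see \cite{hap3}). Since $A$ is assumed representation-finite, the only surviving possibility is $\mathcal{H}\cong' \md K\Delta$ with $\Delta$ a Dynkin diagram, so $D^b(A)\cong D^b(\md K\Delta)=:D$.

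Next I would invoke Happel's description of $D$ \cite{hap3}: $D$ is a Krull--Schmidt triangulated category whose Auslander--Reiten quiver is the translation quiver $\mathbb{Z}\Delta$ and in which $\operatorname{rad}^\infty=0$, so that every nonzero non-isomorphism between indecomposables of $D$ is a nonzero sum of compositions of irreducible morphisms. The key combinatorial observation is that $\mathbb{Z}\Delta$ is acyclic: along each arrow the $\mathbb{Z}$-coordinate is non-decreasing, it strictly increases along the connecting arrows between consecutive copies of $\Delta$, and inside one copy the arrows form a fixed orientation of the tree $\Delta$, which has no oriented cycle; hence $\mathbb{Z}\Delta$ admits no oriented path of positive length from a vertex to itself.

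Finally, suppose for contradiction that some indecomposable $A$-module lies on a cycle $M_0\xrightarrow{f_1}M_1\to\dotsb\xrightarrow{f_t}M_t\cong M_0$ with each $M_i$ indecomposable and each $f_i$ nonzero and not an isomorphism. Under the fully faithful exact embedding $\md A\hookrightarrow D^b(A)\cong D$ this becomes a sequence of indecomposable objects of $D$ joined by nonzero non-isomorphisms. Because $\operatorname{rad}^\infty(D)=0$, each $f_i$ lies in $\operatorname{rad}^{n_i}\setminus\operatorname{rad}^{n_i+1}$ for some $n_i\ge 1$, which by the standard relation between the radical filtration and paths in the Auslander--Reiten quiver yields an oriented path of length $n_i\ge 1$ from $M_{i-1}$ to $M_i$ in $\mathbb{Z}\Delta$. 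Concatenating these paths produces a closed oriented path of positive length in $\mathbb{Z}\Delta$, contradicting its acyclicity. Hence no indecomposable $A$-module lies on a cycle, i.e.\ $\md A$ is directed. The one step that is not routine is the reduction of the first paragraph --- showing that representation-finiteness forces Dynkin type; once $D^b(A)\cong D^b(\md K\Delta)$ is available, everything else is a transport of the acyclicity of $\mathbb{Z}\Delta$ through the derived equivalence and the module embedding.
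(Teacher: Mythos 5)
Your second and third paragraphs are fine: for $\mathcal{H}=\md K\Delta$ with $\Delta$ Dynkin, the derived category is standard, $\mathbb{Z}\Delta$ is acyclic, and transporting a cycle of $\md A$ through the embedding $\md A\hookrightarrow D^b(A)$ does yield the desired contradiction. The genuine gap is exactly the step you flag as ``not routine'': the reduction to Dynkin type is false. Representation-finiteness does not force the hereditary category to be of Dynkin type, because representation-finiteness is not preserved under derived equivalence. A concrete counterexample to your claim: take the gentle algebra whose quiver is the square with arrows $1\to 2\to 3$ and $1\to 4\to 3$ and whose relations declare both paths of length two to be zero. This algebra is representation-finite (it is a string algebra with no bands and only finitely many strings, having exactly ten indecomposables), yet by Assem--Skowro\'nski's characterization of iterated tilted algebras of type $\widetilde{A}_n$ (gentle one-cycle algebras satisfying the clock condition; here one clockwise and one counterclockwise relation) it is derived equivalent to a representation-infinite tame hereditary algebra of type $\widetilde{A}_3$. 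More generally, tilted algebras of Euclidean or wild type whose tilting module has both preprojective and preinjective summands can be representation-finite, and representation-finite piecewise hereditary algebras of sheaf type also occur. So the cases you discard in the first paragraph are nonempty, and they are precisely where the corollary has content; your argument only proves the Dynkin-type case.

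Note also that the paper gives no proof of this statement---it is quoted from Happel \cite{hap3}---and Happel's argument does not proceed by a Dynkin reduction. Instead one transports a hypothetical cycle $M_0\to\dotsb\to M_t\cong M_0$ to $D^b(\mathcal{H})$, writes $FM_i\cong X_i[n_i]$ with $X_i\in\mathcal{H}$, uses hereditariness of $\mathcal{H}$ (nonzero maps only into shifts $0$ or $1$) to conclude that around the cycle all $n_i$ coincide, so the cycle lands inside a single copy of $\mathcal{H}$, and then contradicts representation-finiteness of $A$ itself. If you want a complete proof, your last two paragraphs must be replaced by an argument of this kind that works for an arbitrary hereditary $\mathcal{H}$ (quiver or sheaf type), not just for $\mathbb{Z}\Delta$ with $\Delta$ Dynkin.
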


We say that $A$ is directed if the category $\md A$ is directed.

Observe that for a directed algebra, over an algebraically closed field $K$, the endomorphism ring of an indecomposable module is isomorphic to $K$. since if there is a non zero endomorphism which is not an isomorphism, we get a cycle of lenght $1$.

Now, we use the following Ringel theorem \cite{rin}:
\begin{teo}[Ringel \cite{rin}]
Let $A$ be an algebra having a sincere and directed indecomposable module. Then $A$ is a tilted algebra.
\end{teo}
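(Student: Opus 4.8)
The plan is to produce a \emph{complete slice} in the Auslander--Reiten quiver $\Gamma_A$ of $A$ and then invoke the structural characterisation of tilted algebras: a connected algebra is tilted precisely when $\Gamma_A$ contains a complete slice, that is, a section $\Sigma$ (a connected, acyclic, convex full subquiver of one component that meets each $\tau$-orbit of that component exactly once) for which the module $\bigoplus_{X \in \Sigma} X$ is sincere. The sufficiency direction of this characterisation produces, from such a $\Sigma$, a hereditary category $\mathcal{H}$ and a tilting object realising $A = \End_{\mathcal{H}} T$ as a tilted algebra; it is the main external input and is independent of the statement we are proving. Our job is thus to manufacture a complete slice from the given sincere directed indecomposable $M$.

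First I would record the basic consequences of directedness. Since $M$ lies on no cycle of nonzero non-isomorphisms between indecomposables, it is exceptional: $\End_A M \cong K$ (as already observed for directed modules) and $\Ext^1_A(M,M)=0$. The key combinatorial fact, immediate from directedness, is a \emph{separation property}: no indecomposable $X$ can be at once a proper predecessor and a proper successor of $M$, for a path $M \to \dotsb \to X \to \dotsb \to M$ would be a cycle through $M$. This cleanly splits the indecomposables related to $M$ into predecessors and successors, and it is this dichotomy that keeps the component $\mathcal{C}$ of $\Gamma_A$ containing $M$ from folding back on itself.

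Next I would build the section. Inside $\mathcal{C}$ I would take $\Sigma$ to be the ``boundary'' between the predecessors and the successors of $M$ --- concretely, $M$ together with those indecomposables $Y \in \mathcal{C}$ that are predecessors of $M$ but are maximal such, in the sense that the forward translate $\tau^{-1}Y$ is no longer a predecessor of $M$ --- and verify, using the separation property and an analysis of sectional paths, that $\Sigma$ is connected, acyclic, convex, and meets each $\tau$-orbit of $\mathcal{C}$ exactly once. Since $M \in \Sigma$ is already sincere, $\bigoplus_{X\in\Sigma}X$ is sincere, so $\Sigma$ is a complete slice and the criterion yields that $A$ is tilted.

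The hard part will be the construction step: proving that the boundary set $\Sigma$ is genuinely a convex section meeting every $\tau$-orbit exactly once requires a delicate control of sectional paths and of $\Hom$- and $\Ext$-vanishing along $\mathcal{C}$, that is, of the way arrows and the translate $\tau$ interact near $M$. This local-to-global analysis of the component around a directing sincere module is the technical heart of the theorem; by contrast the homological preliminaries and the final appeal to the slice criterion are comparatively routine once the slice has been exhibited.
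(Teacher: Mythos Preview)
The paper does not prove this theorem at all: it is merely quoted from Ringel \cite{rin} and then applied. There is therefore no ``paper's own proof'' to compare your proposal against.

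That said, your outline is a faithful sketch of the standard argument (essentially the one in Ringel's book). The strategy of locating a complete slice through the sincere directing module $M$ and then invoking the slice criterion for tilted algebras is exactly how the result is established. Your identification of the technical core --- showing that the boundary set $\Sigma$ is a convex section meeting each $\tau$-orbit once --- is accurate; this is where the work lies, and it relies on the Auslander--Reiten theory of directing modules (in particular, that the component containing $M$ is standard and that $\Hom$ and $\Ext$ behave well along sectional paths near $M$). One small refinement: in the literature the section is often taken dually, as $M$ together with the \emph{minimal successors} of $M$ whose $\tau$-translate is still a successor, but your predecessor-side construction works symmetrically. If you were to flesh this out into a full proof you would need to be explicit about why every $\tau$-orbit of $\mathcal{C}$ is actually hit by $\Sigma$ (this uses sincerity in an essential way, not just for the ``complete'' part of ``complete slice'') and why $\mathcal{C}$ contains all projectives, but as a plan your proposal is sound.
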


The following result is a consequence of the two previous statements.

\begin{prop}
Let $A$ a representation-finite sincere algebra. Then $A$ is a tilted algebra, consequently $\gl{A} \leq 2$.
\end{prop}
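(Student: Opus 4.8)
The plan is to chain together the three results the excerpt has just recalled. We are given that $A$ is representation-finite and sincere, so by definition $A$ admits a sincere indecomposable module $M$. The first step is to upgrade "sincere indecomposable" to "sincere directed indecomposable". For this I would invoke Happel's Corollary~\ref{Adirec}, but that corollary requires $A$ to be piecewise hereditary, which is not among our hypotheses. So the first real task is to argue that a representation-finite sincere algebra is automatically piecewise hereditary — or, better, to bypass piecewise heredity entirely. The cleanest route is: a representation-finite sincere algebra has, in particular, only finitely many indecomposables, and a classical fact (Bautista--Gabriel--Roiter--Salmer\'on, or the theory of sincere algebras as developed by Ringel) is that sincere representation-finite algebras are representation-directed. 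Alternatively, since the paper seems to want to keep the exposition self-contained using only the stated results, I suspect the intended reading is that Happel's corollary is being applied after one notes that a sincere representation-finite algebra is tilted (hence piecewise hereditary), but that is circular. The honest fix is to cite directly that representation-finite sincere algebras are directed.

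Granting that $M$ is a sincere \emph{directed} indecomposable $A$-module, the second step is immediate: Ringel's theorem (quoted above) says that an algebra possessing a sincere directed indecomposable module is a tilted algebra. Hence $A$ is tilted.

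The third step converts "tilted" into the dimension bound. A tilted algebra is by definition $\End_H(T)$ for $H$ a hereditary algebra and $T$ a tilting $H$-module; it is a standard fact (and also follows from the Happel--Reiten--Smal\o\ bound quoted in this section applied to $\mathcal{H}=\md H$, since a tilted algebra is piecewise hereditary of type a hereditary \emph{algebra} with the two-term tilting complex giving width~$1$) that $\gl A \le 2$. Concretely, for a tilted algebra the projective dimension of every module is at most $2$ because tilting modules have projective dimension $\le 1$ and the torsion pair splits the module category into pieces controlled by $T$. So $\gl A \le 2$, as claimed.

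The main obstacle is squarely the first step: Happel's Corollary~\ref{Adirec} as stated needs piecewise heredity, which we do not yet know, so I cannot quote it verbatim without circularity (piecewise heredity would follow from being tilted, which is what we are trying to prove). The resolution is to replace that appeal with the genuinely classical input that every representation-finite sincere algebra is representation-directed — this is where the representation-finiteness hypothesis does its real work, and it is the one place where the proof genuinely depends on structure theory outside the chain of results literally displayed in the excerpt. Everything after that (Ringel $\Rightarrow$ tilted $\Rightarrow$ $\gl \le 2$) is a formal consequence.
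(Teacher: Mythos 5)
Your reduction from ``tilted'' to $\gl A \leq 2$ is fine, and you correctly spotted that Corollary \ref{Adirec} carries ``piecewise hereditary'' among its hypotheses. But the repair you propose does not work: it is not a classical fact --- indeed it is false --- that every representation-finite sincere algebra is representation-directed. The local algebra $K[x]/(x^2)$ (a loop modulo $x^2$, hence of the allowed form $KQ/I$ with $I$ admissible) is representation-finite and sincere (it is itself a sincere indecomposable module), yet it is self-injective of infinite global dimension, so it is neither representation-directed nor tilted; cyclic Nakayama algebras give connected counterexamples with any number of simple modules. So your chain ``sincere $+$ representation-finite $\Rightarrow$ directed $\Rightarrow$ tilted'' breaks at the first arrow, and in fact the proposition as literally printed cannot be proved by any argument, since these algebras also violate its conclusion.

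The paper's own argument is different from your route and avoids the problem by context rather than by a new lemma: the proposition is presented as ``a consequence of the two previous statements,'' i.e.\ of Corollary \ref{Adirec} and of Ringel's theorem, and it is only applied to PHI algebras of finite representation type --- so the intended (though unstated) hypothesis is that $A$ is in addition piecewise hereditary. With that hypothesis there is no circularity of the kind you worried about: piecewise heredity is an assumption, not something to be derived from tiltedness. Happel's corollary then gives that $\md A$ is directed, hence the sincere indecomposable module is a sincere directed module; Ringel's theorem gives that $A$ is tilted; and tilted algebras have global dimension at most $2$, exactly as in your final step. In short, your step three is correct, your step one rests on a false claim, and the correct reading is to restore the piecewise hereditary hypothesis rather than to bypass it.
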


\begin{cor}
If $A$ is a PHI algebra of finite representation type they its global dimension is less or equal to $2$.
\end{cor}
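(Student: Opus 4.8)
The plan is to assemble the statement from the results already collected in this section, with essentially no new argument needed. First I would observe that, by the standing convention of this paper, a PHI algebra $A = K\Delta$ is connected, so Proposition \ref{sincero} applies and furnishes the canonical sincere indecomposable $A$-module $M$. Hence $A$ is a sincere algebra, and indeed a sincere algebra that is piecewise hereditary, since being a PHI algebra means precisely being an incidence algebra that is piecewise hereditary.

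Next I would use the hypothesis that $A$ is of finite representation type. Since $A$ is both representation-finite and piecewise hereditary, Happel's Corollary \ref{Adirec} applies and tells us that $\md A$ is directed, i.e. every indecomposable $A$-module is directed. In particular the canonical sincere module $M$ is a directed indecomposable module, so $A$ admits a sincere directed indecomposable module. At this point Ringel's theorem gives that $A$ is a tilted algebra, and since tilted algebras have global dimension at most two we conclude $\gl A \leq 2$; this is exactly the content of the Proposition immediately preceding the corollary, applied to $A = K\Delta$.

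I do not expect a genuine obstacle here: the only point that requires care is verifying that the hypotheses of Happel's Corollary \ref{Adirec} and of Ringel's theorem are met, namely that the combination ``incidence algebra'' (hence connected, hence sincere by Proposition \ref{sincero}) together with ``representation-finite'' and ``piecewise hereditary'' simultaneously yields sincerity \emph{and} directedness of the relevant indecomposable module. Once that observation is in place, the bound $\gl A \leq 2$ is immediate, and the whole corollary can be phrased as a one-line specialization of the preceding Proposition to the class of PHI algebras of finite representation type.
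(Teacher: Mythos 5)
Your argument is correct and is essentially the paper's own route: the corollary is just the preceding Proposition specialized to PHI algebras, with sincerity supplied by Proposition \ref{sincero} (via the canonical sincere module), directedness by Happel's Corollary \ref{Adirec}, and tiltedness (hence $\gl A \leq 2$) by Ringel's theorem. No discrepancy to report.
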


The global dimension of an incidence algebra is related with it strong simply connectedness. Before we show some results 
in this direction, we will introduce a family of algebras with global dimension equal to three called critical algebras.

\begin{definition}[critical algebra \cite{bor-fern-tre}] 
Let $B$ be an algebra. We say that $B$ is critical if it satisfies the following properties:
\begin{enumerate}[(i)]
\item $B$ has a unique source $i$ and a unique sink $j$,
\item Let $S_a$ be a simple module associated with source $a$ and let $S_b$ be a simple module associated with sink $b$, then $\dpr{S_a} = 3$ and 
$\din{S_b} = 3$. If $S$ is a simple module associated with other vertice then $\dpr{S} \leq 2$ and $\din{S} \leq 2$.
\item Consider the minimal projective resolution of the simple module $S_a$:
$$
0 \longrightarrow P_3 \longrightarrow P_2 \longrightarrow P_1 \longrightarrow P_0 \longrightarrow S_a \longrightarrow 0.
$$
Let $P$ be the following projective module, $P=\bigoplus_{k=0}^3 P_k$. Then all indecomposable projective are in $\add{P}$, and each indecomposable projective is a direct 
summand of exactly one $P_k$, for $k \in \{0,\dotsc,3\}$.
\item Consider the  minimal injective resolution of the $S_b$:
$$
0 \longrightarrow S_b \longrightarrow I_0 \longrightarrow I_1 \longrightarrow I_2 \longrightarrow I_3 \longrightarrow 0.
$$
Consider the  $B$-module $I=\bigoplus_{k=0}^3 I_k$. Then all indecomposable injectives are in $\add{I}$, and each indecomposable injective is a direct 
summand of exactly one $I_k$, for $k \in \{0,\dotsc,3\}$.
\item $B$ does not contain any proper full subcategory that verifies i), ii), iii) e iv).
\end{enumerate}
\end{definition}

A description by quivers and relations of all the critical algebras can be found in the work ``A criterion for global 
dimension two for strongly simply connected schurian algebras'' \cite{bor-fern-tre}.

\begin{prop}[Bordino-Fernandez-Trepode \cite{bor-fern-tre}]
Let $B$ be a critical algebra. Then the algebra has one of the following presentations.

\begin{align*}
A_1:
\xymatrix@1{
\bullet \ar[r] ^(.3){}="a" & \bullet \ar[r] ^{}="b" & \bullet \ar[r] ^(.6){}="c" & \bullet \ar@{.} @/^/ "a";"b" \ar@{.} @/^/ "b";"c"
} && 
B_1:
\begin{aligned}
\xymatrix{
& \bullet \ar[dr] ^(.3){}="B" &&\\
\bullet \ar[ur] \ar[dr] ^(.3){}="A" \ar@{.}[rr] && \bullet \ar[dr] ^{}="D" & \\
& \bullet \ar[dr] ^{}="C" \ar@{.}[rr] \ar[ur] && \bullet \\
&& \bullet \ar[ur] \ar@{.} @/_/ "A";"C" \ar@{.} @/^/ "B";"D" &
}
\end{aligned}
\end{align*}
\begin{align*}
A_l \; (l \geq 2):
\begin{aligned}
\xymatrix@R8pt@C10pt{
&& 1 \ar[dr] 	&\\
\bullet \ar[r] ^{}="A" _{}="E" & \bullet \ar[ur] ^{}="B" \ar[dddr] _{}="D" \ar[dr] _{}="C" \ar@{.}[rr] && \bullet \ar@{.} @/^/ "A";"B" \ar@{.} @/_/ "E";"C" \ar@{.} @/_/ "E";"D"\\
&& 2 \ar[ur]	&\\
&& \vdots		&\\
&& l \ar[uuur]	&
}
\end{aligned}
&& 
Q_n \; (n \geq 2):
\begin{aligned}
\xymatrix@R10pt@C15pt{
&& \bullet \ar[dll] \ar[dl] \ar[d] \ar[drr] &&\\
1 \ar[dd] \ar[ddr] & 2 \ar[dd] \ar[ddr] & 3 \ar[dd] & \cdots & n \ar[dd] \ar[ddllll] \\
&&&&\\
\bullet \ar[drr] & \bullet \ar[dr] & \bullet \ar[d] & \cdots & \bullet \ar[dll] \\
&& \bullet &&
}
\end{aligned}
\end{align*}
\begin{align*}
B_m \; (m \geq 3):
\begin{aligned}
\xymatrix@R8pt@C9pt{
&&&&& \bullet \ar@/_1pc/[ddlllll] \ar@/_/[ddlll] \ar@{.}@/_4mm/[ddddll] \ar[ddl] \ar[ddrrr] \ar@/^1pc/[ddrrrrr] \ar@{.}@/_9mm/[ddddllll] \ar@{.}@/^9mm/[ddddrrrr] &&&&&\\
&&&&&&&&&&\\
1 \ar[ddr] ^(.3){}="A" && 2 \ar[ddl] \ar@{.}@/_6mm/[ddddrrr] \ar[ddr] && 3 \ar[ddl] && \cdots && m-1 \ar[ddl] \ar@{.}@/^6mm/[ddddlll] \ar[ddr] && m \ar[ddl] ^(.3){}="C" \\
&&&&&&&&&&\\
& 1' \ar@/_/[ddrrrr] ^(.7){}="B" && 2' \ar[ddrr] && \cdots  && \bullet \ar[ddll]&& m'-1 \ar@/^/[ddllll] ^(.7){}="D"	&\\
&&&&&&&&&&\\
&&&&& \bullet \ar@{.} @/_2pc/ "A";"B" \ar@{.} @/^2pc/ "C";"D" &&&&&
}
\end{aligned}
\end{align*}

In the case $Q_n$, above, the relations are given by declaring that two parallel paths are equal.
\end{prop}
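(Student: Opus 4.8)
The plan is to derive the classification directly from the defining conditions (i)--(v) of a critical algebra (the full argument is carried out in \cite{bor-fern-tre}). Since $\dpr S_i = 3$, the algebra $B$ has global dimension exactly $3$, and conditions (iii)--(iv) attach to it a rigid combinatorial skeleton. First I would use (iii) to define a \emph{layer function} $\ell\colon Q_0 \to \{0,1,2,3\}$ by setting $\ell(a)=k$ exactly when $P_a$ is the (necessarily unique) indecomposable summand of the $k$-th term $P_k$ of the minimal projective resolution of $S_i$ in which $P_a$ occurs. Because $P_0$ is the projective cover of the simple $S_i$ it is indecomposable, so $\ell^{-1}(0)=\{i\}$; dually (iv) yields a colayer function $\ell'$ with $(\ell')^{-1}(0)=\{j\}$. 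The first substantive step is to show that $\ell$ \emph{grades the quiver}: every arrow $a\to b$ satisfies $\ell(b)=\ell(a)+1$. This I would establish by an induction on the layers, using that the multiplicity‑free layered shape of the resolution in (iii) leaves no room for an arrow to skip a level — such an arrow would make some indecomposable projective appear in two different terms $P_k$, or force an intermediate simple $S_b$ to have $\dpr S_b=3$, against (ii)--(iii). Consequently $Q$ is acyclic, concentrated in four levels, with $i$ its only source (at level $0$) and, since $B$ has a unique sink, $j$ its only sink, necessarily at level $3$.

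Next I would bring in the minimality clause (v) together with connectedness. Every vertex must lie on an oriented path from $i$ to $j$, for otherwise the full convex subcategory spanned by the vertices that do would already satisfy (i)--(iv). Moreover the bipartite arrow‑incidence between two consecutive levels is pushed to be as economical as possible, so that at this stage $B$ is completely described by: (a) the layer cardinalities $n_k=|\ell^{-1}(k)|$, with $n_0=n_3=1$; (b) the bipartite graphs of arrows between levels $k$ and $k+1$; and (c) for each pair of parallel paths of length two, a choice between a commutativity relation and a zero relation — this choice being dictated by the requirement that the minimal projective resolution of $S_i$ and the minimal injective coresolution of $S_j$ have exactly the shapes prescribed in (iii)--(iv). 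Carrying out the case analysis on the data (a)--(c), and deleting every configuration that contains a proper full subcategory of the same type (so that (v) holds), should leave precisely the families $A_1$, $B_1$, $A_l$ $(l\ge 2)$, $Q_n$ $(n\ge 2)$ and $B_m$ $(m\ge 3)$; the remark about $Q_n$ is then just the observation that in that family all the length‑two relations turn out to be commutativity relations, so $KQ_n/I$ is the incidence algebra of the displayed poset.

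The hard part will be the last step: showing that the local requirements — each indecomposable projective in exactly one $P_k$, each indecomposable injective in exactly one $I_k$, intermediate simples having $\dpr$ and $\din$ at most $2$, and $\dpr S_i=\din S_j=3$ \emph{exactly} — are restrictive enough that only finitely many ``shapes'' survive, and then verifying that each survivor actually satisfies the minimality clause (v). Controlling the relations is the delicate point: for every diamond one must decide whether it commutes or vanishes and then propagate that choice consistently through all four terms of the resolution and of the coresolution simultaneously. It is here that the built‑in hypotheses that $B$ is schurian and strongly simply connected are used, both to keep the combinatorics finite and to make the resulting presentations unambiguous.
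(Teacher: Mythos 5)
This proposition is not proved in the paper at all: it is quoted verbatim from Bordino--Fern\'andez--Trepode \cite{bor-fern-tre}, and the authors use it as an external classification result (their own contribution is only the observation, after the statement, that among these presentations only $Q_n$ can occur as an incidence algebra). So there is no internal proof to compare yours against; the only legitimate ``proof'' within this paper is the citation.

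Judged as a proof in its own right, your proposal has a genuine gap: it is a plan, not an argument. The decisive step --- the case analysis on the layer cardinalities, the bipartite arrow patterns between consecutive layers, and the choice of zero versus commutativity relation on each pair of parallel paths, ending in exactly the five families $A_1$, $B_1$, $A_l$, $Q_n$, $B_m$ --- is only asserted (``should leave precisely\ldots'', ``the hard part will be the last step''), and that is precisely the content of the theorem. The intermediate lemma you lean on is also unproved and nontrivial: the claim that the layer function coming from condition (iii) grades the quiver, i.e.\ every arrow $a\to b$ satisfies $\ell(b)=\ell(a)+1$, does not follow just from ``an arrow skipping a level would put a projective in two terms $P_k$ or force $\dpr S_b=3$''; the vertices occurring in $P_2$ are those in the top of the second syzygy, and how these relate to arrows out of level-one vertices depends on the relations, which is exactly what is being classified. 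Verification of the minimality clause (v) for each survivor is likewise deferred. Finally, you invoke ``built-in hypotheses that $B$ is schurian and strongly simply connected''; these are part of the setting of \cite{bor-fern-tre} but are not among conditions (i)--(v) as the paper states them, so if your argument needs them you must either add them to the hypotheses or explain where they come from. As written, the proposal reduces to ``the full argument is carried out in \cite{bor-fern-tre}'', which is exactly what the paper itself does, but without the missing combinatorial work it cannot stand as an independent proof.
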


Using this proposition, we see that the only critical incidence algebras are the ones whose quiver is the $Q_n$, since
the presentations of algebras $A_1$, $A_l$, $B_1$ and $B_l$ have non-commutative relations.

Now we can state a theorem of Bordino, Fern\'andez and Trepode \cite{bor-fern-tre} which we will use in our proposition \ref{dimglFSC}.

\begin{teo}[Bordino-Fern\'andez-Trepode \cite{bor-fern-tre}]
Let $A$ be a strongly simply connected schurian algebra with global dimension greater or equal to three. Then there exists a full subcategory $B$ 
of $A$ such that $B$ is critical.
\end{teo}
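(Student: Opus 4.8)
The plan is to produce the critical subcategory as a \emph{minimal} full convex subcategory of $A$ whose global dimension is still at least three. Since $A$ itself has $\gl A \ge 3$ and has only finitely many vertices, the family $\mathcal{C}$ of full convex subcategories $C$ of $A$ with $\gl C \ge 3$ is nonempty and has a minimal element $B$ with respect to inclusion. This $B$ is the candidate: it is again strongly simply connected (a full convex subcategory of a strongly simply connected algebra is strongly simply connected) and again schurian (full subcategories of schurian algebras are schurian), and we must verify it satisfies conditions (i)--(v) of the definition of a critical algebra. Note that once (ii) is established $B$ automatically has global dimension exactly three, which is consistent with $B$ being minimal in $\mathcal{C}$.

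For (i) and (ii) I would argue by minimality together with a ``support'' reduction. Because $\gl B \ge 3$ there is a vertex $i$ with $\dpr S_i \ge 3$; using that $B$ is schurian and strongly simply connected one shows $i$ must be a source, since otherwise the full convex subcategory supported on the vertices occurring in a minimal projective resolution of $S_i$ would be a proper member of $\mathcal{C}$, contradicting minimality. Dually, applying the argument to $B^{\mathrm{op}}$ and using $\gl B = \gl B^{\mathrm{op}}$ (both equal the supremum of the projective dimensions of the simples and the supremum of their injective dimensions) yields a sink $j$ with $\din S_j \ge 3$. The same support reduction applied to any other simple $S_a$ with $\dpr S_a \ge 3$ or $\din S_a \ge 3$ would again cut $B$ down properly; hence $i$ and $j$ are unique, $\dpr S_i = \din S_j = 3$, and every other simple has projective and injective dimension at most $2$.

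Conditions (iii) and (iv) carry the real content. Take the minimal projective resolution $0 \to P_3 \to P_2 \to P_1 \to P_0 \to S_i \to 0$ and let $B'$ be the full convex subcategory of $B$ on the vertices $a$ with $P_a$ a direct summand of some $P_k$. The point is that the separation property enjoyed by strongly simply connected algebras forces the $P_k$ to restrict to projective $B'$-modules with the differentials unchanged, so the displayed complex is still a projective resolution over $B'$ and $\gl B' \ge 3$; minimality of $B$ then gives $B' = B$, i.e. every indecomposable projective lies in $\add(P_0 \oplus \cdots \oplus P_3)$. The remaining ``exactly one $P_k$'' clause comes from schurianness --- the $\Hom$ spaces being at most one-dimensional pins down which $P_a$ can appear, and the triangular (poset) structure prevents the same $P_a$ from sitting in two of the $P_k$. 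Dualizing through $B^{\mathrm{op}}$ and the minimal injective resolution of $S_j$ gives (iv). Finally (v) is immediate from minimality: a proper full subcategory of $B$ satisfying (i)--(iv) would possess a simple of projective dimension three, hence global dimension $\ge 3$, contradicting minimality once one observes that such a subcategory is necessarily convex.

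The step I expect to be the main obstacle is precisely the restriction argument in (iii): one must rule out that passing to the full convex subcategory carried by the minimal projective resolution causes the resolution to ``collapse'', and this is exactly where the separation and coseparation conditions characterizing strong simple connectedness must be invoked (and where schurianness is used to control the combinatorics of which projectives occur, and in which single homological degree). The rest is bookkeeping with syzygies and the symmetry exchanging $B$ and $B^{\mathrm{op}}$.
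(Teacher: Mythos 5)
This statement is not proved in the paper at all: it is quoted verbatim from Bordino--Fern\'andez--Trepode \cite{bor-fern-tre} and used as a black box in Proposition \ref{dimglFSC}, so there is no in-paper argument to compare yours against; I can only assess your sketch on its own terms. The overall strategy (pass to a minimal full \emph{convex} subcategory $B$ with $\gl B \ge 3$ and show it is critical) is reasonable, and the part concerning (i)--(ii) can indeed be pushed through: the support of a minimal projective resolution of $S_a$ lies in the successor-closed (hence convex) set of successors of $a$, over which projectives and the resolution restrict, so minimality forces every vertex with $\dpr S_a \ge 3$ to have all of $B$ as its set of successors; acyclicity then gives uniqueness and that this vertex is the unique source, and $\dpr S_i = 3$ exactly follows because $\dpr S_i \ge 4$ would force some other simple to have projective dimension $\ge 3$. (Your own phrasing -- restricting to ``the vertices occurring in the resolution'' -- is not quite right, since that set need not be convex; you need the successor-closed hull.)

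The genuine gaps are in (iii)--(v), which you yourself flag as ``the main obstacle'' but never close. First, the set of vertices $a$ such that $P_a$ is a summand of some $P_k$ in the resolution of $S_i$ need not be convex, so your minimality argument does not apply to it; if you replace it by its convex hull, minimality only yields that $B$ equals that hull, which is strictly weaker than the assertion in (iii) that \emph{every} indecomposable projective occurs among the $P_k$. The appeal to ``the separation property'' is exactly the missing content, not a proof. Second, the ``exactly one $P_k$'' clause does not follow from minimality of the resolution (which only forbids common summands in \emph{consecutive} degrees) nor obviously from schurianness or triangularity; ruling out, say, $P_a$ occurring in both $P_1$ and $P_3$ requires an actual argument. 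Third, condition (v) quantifies over all proper \emph{full} subcategories satisfying (i)--(iv), whereas your $B$ is minimal only among \emph{convex} subcategories of global dimension $\ge 3$; your claim that any full subcategory satisfying (i)--(iv) is automatically convex is asserted without justification, and without it minimality gives you nothing about non-convex full subcategories. As it stands, the proposal establishes (i)--(ii) modulo small repairs, but the heart of criticality -- (iii), (iv) and (v) -- remains unproved.
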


Let $A$ be a schurian triangular algebra, the interval $[x, y]$ between $x$ and $y$ is the full subcategory of $A$ generated by all points $z \in A$ which lie on a nonzero path from $x$ to $y$, that is, such that $xAz \; zAy \neq 0$.
 
In order to state our next theorem we need the definition of crown.
 
\begin{definition}[crown \cite{ass-pla-red-tre}]
Let $C$ be a full subcategory of $A$ generated by $2n$ points $\{x_1,\ldots,x_n,y_1,\ldots,y_n\}$, with $n \geq 2$, and of the form:
\begin{gather*}
\xymatrix{
x_1 \ar[dd] \ar[ddr]& x_2 \ar[dd] \ar[ddr] 	& 				& x_n \ar[dd] \ar[ddlll] \\
					&						& \cdots		& \\
y_1 				& y_2					& 	& y_n
}
\end{gather*}
\begin{enumerate}[\itshape i)]
\item We say that $C$ is a weak crown in $A$ if:
\begin{enumerate}
\item For each $i$, $[x_i,y_i]$ intersects those of $[x_{i-1},y_i]$ and
$[x_i,y_{i+1}]$, and of no other $[x_h,y_l]$ (here, and in the sequel, we agree to set
$y_{n+1}=y_1$ and $x_0=x_n$).
\item the intersection of three distinct $[x_h,y_l]$ is empty.
\end{enumerate}

\item A weak crown $C$ is said to be a crown if, for each $i$, the intersection of $[x_i,y_i]$ and of $[x_i,y_{i+1}]$ is $x_i$, and the intersection of $[x_{i-1},y_i]$ and of $[x_i,y_i]$ is $y_i$.
\end{enumerate}
\end{definition}

Before stating the proposition \ref{dimglFSC}, we need the following result:

\begin{teo} [Assem-Castonguay-Marcos-Trepode \cite{ass-cas-mar-tre}]
Let $K\Delta$ be an incidence algebra, $K\Delta$ is strongly simply connected if and only if $K\Delta$ does not contain a crown.
\end{teo}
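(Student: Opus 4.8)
The plan is to unwind the definition into a statement about closed walks in the Hasse quiver and then treat the two implications, the hard one by a minimality argument.

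\emph{Reduction.} As $K\Delta$ is triangular, it is strongly simply connected exactly when every convex subcategory of it is simply connected. A convex subcategory of $K\Delta$ is the incidence algebra $K\Delta'$ of an order-convex subposet $\Delta'\subseteq\Delta$, and for such $\Delta'$ every interval $[x,y]$ computed inside $\Delta'$ equals the one computed inside $\Delta$; hence a crown of $\Delta'$ is a crown of $\Delta$, so being crown-free is inherited by order-convex subposets. It thus suffices to prove that $K\Delta$ is simply connected if and only if $\Delta$ has no crown, with the understanding that for ``crown $\Rightarrow$ not simply connected'' it is enough to exhibit one convex subcategory that is not simply connected, and using that for incidence algebras simple connectedness can be tested on the canonical commutativity presentation, so that ``not simply connected'' means $\pi_1(Q,I)\neq 0$. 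I would also use, in both directions, the identification for incidence algebras of $\pi_1(Q,I)$ with the fundamental group of the order complex of $\Delta$.

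\emph{Crown $\Rightarrow$ not simply connected.} Given a crown on $\{x_1,\dots,x_n,y_1,\dots,y_n\}$, let $K\Delta_1$ be the convex subcategory of $K\Delta$ generated by the $2n$ intervals $[x_i,y_i]$ and $[x_{i-1},y_i]$ (indices mod $n$), and consider the ``necklace'' closed walk that ascends through $[x_1,y_1]$, descends through $[x_n,y_1]$, ascends through $[x_n,y_n]$, descends through $[x_{n-1},y_n]$, and so on, closing up by descending through $[x_1,y_2]$. I would show that this walk represents a nonzero class of infinite order in the first homology of the order complex, hence a non-trivial element of its fundamental group, which for an incidence algebra is $\pi_1(Q_{\Delta_1},I)$; so $K\Delta_1$ is not simply connected. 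The crown axioms enter exactly here: because any two of these intervals meet in at most one of the points $x_i,y_i$ and no point lies in three of them, each $2$-simplex of the order complex is supported inside a single, contractible, interval, so no $2$-chain can bound the necklace, which winds around all of them. (Equivalently, one dualises this to a non-inner derivation of $K\Delta_1$ and concludes by the theorem of De la Pe\~na and Saor\'in \cite{pen-sao} that $\Hom_{gr}(\pi_1(Q_{\Delta_1},I),K^+)\neq 0$.)

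\emph{Crown-free $\Rightarrow$ simply connected.} Suppose not; then some order-convex subposet of $\Delta$ has non-trivial fundamental group, and one chooses such a $\Delta_0$ with a minimal number of points, so that $K\Delta_0$ is not simply connected while $K\Delta''$ is simply connected for every proper order-convex subposet $\Delta''$ of $\Delta_0$ (an order-convex subposet of $\Delta_0$ is again order-convex in $\Delta$). Pick a reduced closed walk $w$ in $Q_{\Delta_0}$ of minimal length among all closed walks in a non-trivial homotopy class; then by minimality of $\Delta_0$ the order-convex hull of the vertices of $w$ is all of $\Delta_0$, and by minimality of length $w$ has no repeated vertex apart from its endpoints (a repetition would split $w$ into two closed walks, one of them shorter and non-trivial). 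As $\Delta_0$ is triangular, $w$ decomposes into a cyclic sequence of maximal ascending and descending subwalks; reducedness and non-triviality force at least two of each, and their turning points are pairwise distinct local minima $x_1,\dots,x_k$ and local maxima $y_1,\dots,y_k$ of $w$ with $k\geq 2$, $x_{i-1}\preceq y_i$, $x_i\preceq y_i$, the ascending pieces lying in the intervals $[x_{i-1},y_i]$ and $[x_i,y_i]$. It remains to verify the weak-crown and crown conditions for $\{x_1,\dots,x_k,y_1,\dots,y_k\}$, each of which follows from minimality: an ``extra'' intersection of two of these intervals beyond a shared $x_i$ or $y_i$, or a common point of three of them, would allow one to homotope $w$ --- across a commutativity relation --- either onto a strictly shorter non-trivial closed walk or into a proper order-convex subposet of $\Delta_0$, where every class is trivial; either outcome contradicts the choice of $w$ and $\Delta_0$. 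Hence $\Delta_0$, and therefore $\Delta$, contains a crown.

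\emph{Main obstacle.} The delicate part is the last implication, and within it the passage from a merely ``cyclically shaped'' non-contractible walk to an honest crown: one must derive \emph{all} the precise intersection clauses --- pairwise intersections collapsing to single points, no triple intersections, each $[x_i,y_i]$ meeting only its two neighbours --- from minimality of $w$ and of $\Delta_0$, which requires a careful analysis of the small and degenerate configurations ($k=2$, adjacent or nearly coinciding intervals, a walk backtracking near a shared vertex) and a precise argument turning each forbidden overlap into a genuine shortening of $w$ or a genuine push of $w$ into a proper convex subposet. A secondary technical point is the handling of fundamental groups of incidence algebras --- presentation-independence and the comparison with the order complex --- which streamlines the bookkeeping but must be used with care about base points and about which $2$-cells occur.
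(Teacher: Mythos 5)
The paper does not prove this statement at all: it is quoted verbatim from Assem--Castonguay--Marcos--Trepode \cite{ass-cas-mar-tre} and used as a black box in Proposition \ref{dimglFSC}, so your attempt has to stand entirely on its own. The direction ``crown $\Rightarrow$ not strongly simply connected'' is essentially sound as you sketch it: the union of the $2n$ intervals is order-convex, fullness of the crown forces every chain (hence every simplex of the order complex) into a single interval, each interval is a cone, and the crown axioms make the nerve of this cover a $2n$-cycle, so the canonical commutativity presentation of that convex subcategory has non-trivial fundamental group, which already denies simple connectedness.

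The other direction, however, contains a genuine gap, and it is exactly where the content of the theorem lies. Your extraction of a crown from a minimal non-contractible closed walk $w$ reduces every crown axiom to the sentence ``an extra intersection would allow one to homotope $w$ onto a strictly shorter non-trivial walk or into a proper convex subposet,'' but neither alternative is verified, and neither is automatic. If $x_i\preceq y_j$ for a non-adjacent pair, or if some $z$ lies in two non-adjacent intervals, the shortcut you would splice in is a saturated chain whose length you do not control, so the resulting closed walks need not be shorter than $w$; ``minimal length'' is therefore not known to decrease, and one would need a different measure (number of turning points, or a lexicographic invariant) together with a case analysis of the degenerate configurations ($k=2$, comparabilities among the $x_i$ or among the $y_i$, an intersection point equal to a turning point) --- precisely the analysis you defer in your ``main obstacle'' paragraph. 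Likewise the option ``push $w$ into a proper order-convex subposet where every class is trivial'' is asserted without any construction. A further unproved input is the reduction ``not simply connected $\Rightarrow$ the canonical commutativity presentation has non-trivial $\pi_1$'': simple connectedness quantifies over all presentations, so you need the presentation-independence of the fundamental group for incidence algebras, which you invoke but do not justify; note it cannot be replaced by the De la Pe\~na--Saor\'in theorem quoted in the paper, since $HH^1(K\Delta)=0$ does not imply triviality of $\pi_1$ (the paper's own projective-plane example). As it stands the proposal is a plausible plan, not a proof, of the implication ``crown-free $\Rightarrow$ strongly simply connected.''
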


Now we have the result on incidence algebras. 

\begin{prop} \label{dimglFSC}
Let $K\Delta$ be a strongly simply connected incidence algebra. Then the global dimension of $K\Delta$ is less or equal to two.
\end{prop}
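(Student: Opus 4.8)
The plan is to combine the structural results recalled just above with a contradiction argument. Suppose $K\Delta$ is a strongly simply connected incidence algebra with $\gl K\Delta \geq 3$. Being an incidence algebra, $K\Delta$ is schurian and triangular, so the theorem of Bordino, Fern\'andez and Trepode applies: there exists a full subcategory $B$ of $K\Delta$ which is critical. A full subcategory of an incidence algebra is again an incidence algebra (one restricts the poset to the corresponding subset of vertices and the commutativity relations restrict accordingly), so $B$ is itself a critical incidence algebra.

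Next I would invoke the classification of critical algebras recalled in the Bordino--Fern\'andez--Trepode proposition above: every critical algebra has one of the presentations $A_1$, $B_1$, $A_l$ ($l\geq 2$), $Q_n$ ($n\geq 2$), or $B_m$ ($m\geq 3$). As observed immediately after that proposition, the presentations of $A_1$, $A_l$, $B_1$ and $B_m$ all carry non-commutativity relations (a zero relation, or a relation identifying a path with $0$, rather than merely equating parallel paths), hence none of these can occur as an incidence algebra; the only critical incidence algebra is therefore the one of type $Q_n$. So $B$ is of type $Q_n$ for some $n \geq 2$.

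The heart of the argument is then to show that $Q_n$ contains a crown, which forces $K\Delta$ to contain a crown and contradicts the Assem--Castonguay--Marcos--Trepode theorem characterising strongly simply connected incidence algebras. Concretely, in the quiver $Q_n$ one has a top source, the $n$ middle vertices labelled $1,\dots,n$, the $n$ unlabelled vertices on the next row, and a bottom sink; taking the $n$ vertices $1,\dots,n$ together with the $n$ vertices on the row below them, and examining the intervals between them, one recognises exactly the shape in the definition of a (weak) crown, with the incidence relations guaranteeing that consecutive intervals $[x_i,y_i]$ and $[x_i,y_{i+1}]$ meet precisely in $x_i$ and that $[x_{i-1},y_i]$ and $[x_i,y_i]$ meet precisely in $y_i$, while non-adjacent intervals are disjoint and no point lies in three of them. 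Hence $B$, and a fortiori $K\Delta$, contains a crown.

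The main obstacle I expect is the last step: verifying carefully that the subquiver of $Q_n$ picked out really satisfies all the combinatorial conditions in the definition of crown — in particular the ``triple intersection is empty'' and the exact-intersection conditions ii) — rather than just visually resembling the picture. One must check the shape of the commutativity relations in $Q_n$ (two parallel paths are equal, as stated in the proposition) to pin down each interval $[x_i,y_j]$ as a full subcategory and confirm the intersection pattern; once this bookkeeping is done, the contradiction is immediate and the proposition follows, so $\gl K\Delta \leq 2$.
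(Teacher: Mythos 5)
Your proposal is correct and follows essentially the same route as the paper: apply the Bordino--Fern\'andez--Trepode theorem to obtain a critical full subcategory, note that among the critical presentations only $Q_n$ is compatible with an incidence algebra (the others have non-commutativity relations), and observe that $Q_n$ yields a crown, contradicting the Assem--Castonguay--Marcos--Trepode characterisation of strong simple connectedness. Your version merely spells out details the paper leaves implicit (that full subcategories of incidence algebras are incidence algebras, and the verification of the crown conditions inside $Q_n$).
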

\begin{proof}
We recall that an incidence algebra is a schurian algebra. If the global dimension of $K\Delta$ is greater or equal to three then it will contain 
a full subcate\-gory $B$ such that $B$ is critical. Since $K\Delta$ is an incidence algebra, $B$ is of the form $Q_n$ and thus, the incidence algebra $K\Delta$ 
would contain 
a crown. This contradicts the result above.
\end{proof}

\section{Strong global dimension} \label{sec5}

Let $\mathcal{P}_A$ be the full subcategory of $\md A$ consisting of the projective modules. We denote $C^b(\mathcal{P}_A)$ the category of 
bounded complexes with entries in  $\mathcal{P}_A$ and $K^b(\mathcal{P}_A)$ the corresponding homotopy category.

\begin{definition}
 A complex $P=(P^i, d^i)$ in $C^b(\mathcal{P}_A)$ is called radical if the image of each differential $d^i$ is contained in the radical of $P^{i-1}$.
\end{definition}

The following proposition is well known.

\begin{prop}
 Every complex $P=(P^i, d^i)$  in  $C^b(\mathcal{P}_A)$ is homotopic to a unique, up to isomorphism, radical complex.
\end{prop}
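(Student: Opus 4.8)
The plan is to prove the statement by an explicit construction: given any bounded complex $P=(P^i,d^i)$ in $C^b(\mathcal{P}_A)$, produce a radical complex homotopic to it, and then argue that two homotopic radical complexes are isomorphic. First I would establish existence. The idea is to remove, step by step, the "non-radical part" of each differential. Concretely, if some $d^i\colon P^i\to P^{i-1}$ is not radical, then, since $A$ is a finite-dimensional algebra and $P^i$, $P^{i-1}$ are projective, $\Hom_A(P^i,P^{i-1})/\!\operatorname{rad}$ is nonzero precisely when $d^i$ has a component that is a split injection or, more to the point, an isomorphism between a summand of $P^i$ and a summand of $P^{i-1}$; this follows from the fact that a non-radical map of projectives restricts to an isomorphism on suitable indecomposable summands (a Fitting-type argument, or the observation that $\operatorname{rad}\End$ consists exactly of the non-isomorphisms on indecomposables). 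One then splits off the subcomplex $\cdots\to 0\to Q\xrightarrow{\ \sim\ }Q\to 0\to\cdots$ concentrated in degrees $i$ and $i-1$, which is contractible, and the complementary complex is homotopy equivalent to $P$ with strictly smaller total dimension of the entries. Iterating finitely many times (the total dimension $\sum_i\dim_K P^i$ is a non-negative integer that strictly decreases) yields a radical complex homotopic to $P$.

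For the uniqueness part I would show that if $P$ and $P'$ are radical complexes in $C^b(\mathcal{P}_A)$ that are homotopy equivalent, then they are isomorphic as complexes. Let $f\colon P\to P'$ and $g\colon P'\to P$ be mutually inverse up to homotopy, so $gf=\operatorname{id}_P - (d s + s d)$ for some homotopy $s$. The key point is that $ds+sd$ has all components landing in the radical: indeed $s^i\colon P^i\to P'^{i-1}$ composed with radical differentials, plus radical differentials composed with $s$, produce maps in $\operatorname{rad}\Hom_A(P^i,P^i)$ (using that the differentials of $P$ and $P'$ are radical). Hence $gf = \operatorname{id}_P + (\text{something in }\operatorname{rad}\End_{C^b(\mathcal{P}_A)}(P))$, which is an automorphism of the complex $P$ because $\operatorname{rad}\End$ is a nilpotent (or at least quasi-regular) ideal in the finite-dimensional algebra $\End_{C^b(\mathcal{P}_A)}(P)$; similarly $fg$ is an automorphism of $P'$. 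Therefore $f$ is a split mono and split epi as a chain map, hence an isomorphism of complexes.

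The main obstacle I anticipate is the bookkeeping in the existence step: one must be careful that after splitting off a contractible summand in degrees $i,i-1$, the new differentials $d^{i+1}$ and $d^{i-1}$ are modified (by the Gaussian-elimination formula) and may fail to be radical even if they were before — so the induction must be set up on a quantity, such as the total $K$-dimension $\sum_i \dim_K P^i$, that genuinely drops at each step, rather than trying to fix the complex "one differential at a time" left to right. Once the right inductive invariant is chosen, each individual step is the standard Gaussian elimination lemma for complexes, and the rest is formal. The uniqueness step is then essentially the observation that idempotents and units lift modulo the radical in the finite-dimensional endomorphism algebra of a bounded projective complex; I would state this as a lemma about $\operatorname{rad}\End_{C^b(\mathcal{P}_A)}$ and invoke it directly.
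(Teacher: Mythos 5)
Your proposal is correct and is the standard argument: the paper itself states this proposition without proof (calling it well known), and your two steps --- Gaussian elimination with induction on $\sum_i \dim_K P^i$ for existence, and the Nakayama-type observation that a chain endomorphism which is the identity plus a degreewise-radical map is an automorphism for uniqueness --- are exactly the usual way this fact is established. The only point worth polishing is the uniqueness step: rather than invoking $\operatorname{rad}\End_{C^b(\mathcal{P}_A)}(P)$ abstractly, note directly that each degree of $gf=\operatorname{id}_P-(ds+sd)$ is an endomorphism of $P^i$ congruent to the identity modulo $\operatorname{rad}P^i$, hence invertible, and a chain map that is invertible in each degree is an isomorphism of complexes.
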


Due to the former proposition when we consider a  complex in the category $K^b(\mathcal{P}_A)$ we will assume that it is radical, since it is isomorphic in $K^b(\mathcal{P}_A)$ to a unique radical complex.

In reality what happens is that the full subcategory of $K^b(\mathcal{P}_A)$ whose objects are the radical complexes is equivalent to the category $K^b(\mathcal{P}_A)$.

 \begin{definition}

If $P=(P^i, d^i) \in 
K^b(\mathcal{P}_A)$ is a radical complex, which is not zero, then there are integers $r \leq s$ such that $P^i=0$ for all $i<r$ and $P^i=0$ for all $s<i$, with $P_r$ and $P_s$ not zero. 
The length of $P$ is defined as $\operatorname{l}(P)=s-r$.
\end{definition}

We can now define the strong global dimension.

\begin{definition}[strong global dimension \cite{hap-zac}]
Let $A$ a finite-dimensional algebra. The strong global dimension of $A$ is defined in the following way.
$$
\glf A = \sup \{\operatorname{l}(P) \: | \: P \in K^b(\mathcal{P}_A)\; \text{ indecomposable}\}.
$$
\end{definition}

Observe that the strong global dimension can be infinite.

The next theorem is important in the study of the strong global dimension for PHI algebras.
\begin{teo}[Happel-Zacharia \cite{hap-zac}]
Let $A$ a finite-dimensional algebra. The algebra is piecewise hereditary if and only if $\glf A$ is finite.
\end{teo}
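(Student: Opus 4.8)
\emph{Proof proposal.} The two implications are of entirely different natures, and I would treat them separately.

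\emph{Piecewise hereditary $\Rightarrow$ $\glf A<\infty$ (the formal direction).} Assume $A$ is piecewise hereditary of type $\mathcal{H}$. By the consequence of Rickard's theorem recalled above there is a tilting object $T\in D^b(\mathcal{H})$ with $\End T=A$, hence a triangle equivalence $\Phi=\mathbf{R}\Hom_{D^b(\mathcal{H})}(T,-)\colon D^b(\mathcal{H})\to D^b(A)$. The structural input is the standard fact that, $\mathcal{H}$ being hereditary, every object $X$ of $D^b(\mathcal{H})$ splits as $X\cong\bigoplus_i H^i(X)[-i]$: this follows by a short induction on the number of nonzero cohomologies, the point being that $\Hom_{D^b(\mathcal{H})}(N[-j],N'[1])=\Ext^{j+1}_{\mathcal{H}}(N,N')=0$ for $j\geq 1$, which forces the canonical truncation triangles to split. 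In particular the indecomposable objects of $D^b(\mathcal{H})$ are exactly the shifted stalks $M[n]$ with $M\in\mathcal{H}$ indecomposable. Writing $T\cong\bigoplus_i H^i(T)[-i]$ with $i$ ranging over a finite interval and using $\Ext^n_{\mathcal{H}}=0$ for $n\geq 2$, one checks that $\Phi(M[n])=\mathbf{R}\Hom_{D^b(\mathcal{H})}(T,M)[n]$ has nonzero cohomology only in a window of at most $c$ consecutive degrees, with $c$ depending on $T$ alone. So every indecomposable object $Y$ of $D^b(A)$ has cohomology supported in at most $c$ consecutive degrees. Since piecewise hereditary algebras have finite global dimension $g$ (Happel-Reiten-Smal\o\ \cite{hap-rei-sma}), $D^b(A)\simeq K^b(\mathcal{P}_A)$; building a perfect-complex representative of $Y$ out of the stalks $H^i(Y)[-i]$ along the truncation triangles, where each stalk has a projective resolution of length $\leq g$, an induction on the number of nonzero cohomologies yields a representative occupying at most $g+c$ consecutive degrees. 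The radical representative can only be shorter, so its length is $\leq g+c$, and therefore $\glf A\leq g+c<\infty$.

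\emph{$\glf A<\infty$ $\Rightarrow$ $A$ piecewise hereditary (the substantive direction).} Here the plan is an induction on $d=\glf A$. If $d\leq 1$ then $A$ is hereditary, hence trivially piecewise hereditary of type $\md A$. If $d\geq 2$, I would try to construct a tilting complex $T\in K^b(\mathcal{P}_A)$ for which $B:=\End_{K^b(\mathcal{P}_A)}(T)$ satisfies $\glf B<d$: one inspects the indecomposable radical complexes of maximal length $d$, observes that their extreme terms play the role of the projectives (or injectives) of a ``hereditary part'' of $D^b(A)$, and mutates at these, in the spirit of an APR-type tilt, so as to shorten them. Since $B$ is then derived equivalent to $A$ and being piecewise hereditary is invariant under derived equivalence, the inductive hypothesis applied to $B$ finishes the proof. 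An equivalent and perhaps cleaner formulation of the goal is that $\glf A<\infty$ should force $A$ to be derived equivalent to a quasi-tilted algebra — using the Happel-Reiten-Smal\o\ characterization of quasi-tilted algebras ($\gl\leq 2$ together with the dichotomy ``$\dpr\leq 1$ or $\din\leq 1$'' on indecomposables), equivalently the existence of a bounded $t$-structure on $D^b(A)$ with hereditary heart; the finiteness of $\glf A$ is exactly the bound on how far indecomposable complexes can spread that one needs for such a heart to exist. The radical-square-zero case proved by Kerner-Skowro\'nski-Yamagata-Zacharia \cite{ker-sko-yam-zac} is the template for the combinatorics, and once a hereditary heart is produced Happel's classification \cite{hap2} identifies it as $\md KQ$ or $\Coh \mathbb{X}$.

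The main obstacle lies entirely in the converse. Carrying out the reduction step precisely — choosing which indecomposable complexes to mutate at, verifying that the result is a genuine tilting complex, and proving that the strong global dimension actually decreases — is delicate and constitutes the technical heart of the argument; the forward direction is by comparison a routine consequence of the splitting of objects in the derived category of a hereditary category.
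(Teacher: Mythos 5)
This theorem is not proved in the paper at all: it is quoted from Happel--Zacharia \cite{hap-zac} and used as a tool, so there is no internal proof to compare against, and your attempt has to stand on its own. Your forward direction (piecewise hereditary $\Rightarrow$ $\glf A<\infty$) is essentially correct: the splitting of objects of $D^b(\mathcal{H})$ into shifted stalks, the observation that $\Hom_{D^b(\mathcal{H})}(T_j[-m_j],M[n+i])$ can be nonzero only for $n+i+m_j\in\{0,1\}$ (so the cohomology of the image of an indecomposable lives in a window of width bounded in terms of $T$), and the passage from a cohomology window of width $c$ plus $\gl A\le g$ to a projective representative of width $\le g+c$, with the radical representative no longer, is the standard argument and is sound, if compressed at the step ``one checks that.''

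The genuine gap is the converse, which is the substantive content of the theorem, and there you have written a research plan rather than a proof. You never construct the tilting complex $T$ you propose to mutate to, never verify that mutating ``at the extreme terms of the maximal-length indecomposables'' produces an object that generates $K^b(\mathcal{P}_A)$ and has no self-extensions in nonzero degrees, and never prove the key quantitative claim that $\glf(\End T)<\glf A$ -- without that strict decrease the induction on $d=\glf A$ does not get off the ground. Your alternative formulation (``finite $\glf$ should force a bounded $t$-structure with hereditary heart'') is not an argument but a restatement of the conclusion: producing such a heart, and then knowing it has a tilting object so that Happel's classification \cite{hap2} applies, is exactly what Happel and Zacharia's paper does and what is missing here. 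You acknowledge this yourself in the final paragraph, so the honest assessment is: forward direction acceptable, converse not established; as a proof of the biconditional the proposal is incomplete, and for the purposes of the present paper the correct move is what the authors do, namely cite \cite{hap-zac}.
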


Thus it is clear that PHI algebras have finite strong global dimension. A tool to compute the strong global dimension of an algebra is the following theorem.
\begin{teo}[Alvares, Le Meur, Marcos \cite{alv-lem-mar}]
Let $\mathcal{T}$ be triangulated category which is triangle equivalent to the bounded derived category of a hereditary abelian category. Let $T \in \mathcal{T}$ a tilting object. 

There exists a full and additive subcategory $\mathcal{H} \subset \mathcal{T}$ which is hereditary and abelian, such that the embedding $\mathcal{H} \hookrightarrow \mathcal{T}$ extends to a triangle equivalence $D^b(\mathcal{H}) \cong \mathcal{T}$, and
$$
T \in \bigvee_{m=0}^l \mathcal{H}[m]
$$
for some integer $l \geq 0$. Moreover, if $(\End_\mathcal{T} T)^{op}$ is not hereditary then there exists such a pair $(\mathcal{H},l)$ verifying $\glf 
(\End_\mathcal{T} T)^{op} = l+2$
\end{teo}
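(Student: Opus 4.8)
\textit{Proof proposal.} The plan is to transport the computation of the strong global dimension into $\mathcal{T}$ itself and to read off the lengths of indecomposable complexes from the position of $T$ relative to a hereditary heart. Write $A=(\End_\mathcal{T} T)^{op}$. Since $T$ is tilting, Rickard's theorem gives a triangle equivalence $F\colon D^b(A)\xrightarrow{\sim}\mathcal{T}$ with $F(A)=T$, carrying the indecomposable projective $A$-modules to the indecomposable summands of $T$; as $A$ is piecewise hereditary it has finite global dimension, so $K^b(\mathcal{P}_A)\simeq D^b(A)\simeq\mathcal{T}$ and $\glf A$ equals the supremum of the lengths of the minimal radical complexes representing the indecomposable objects of $\mathcal{T}$. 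Two standard facts make these lengths computable. First, in $D^b(\mathcal{H})$ for any hereditary heart $\mathcal{H}$ every object splits as the direct sum of its shifted cohomologies, so every indecomposable object is a shift $X_0[k]$ of an indecomposable $X_0\in\mathcal{H}$; since shifting does not change the length of the associated complex we may always assume $X\in\mathcal{H}$. Second, for a finite-dimensional basic algebra the term in cohomological degree $c$ of a minimal radical complex of an object $Y$ contains the indecomposable projective $P_j$ with multiplicity $\dim_K\Hom_{D^b(A)}(Y,S_j[-c])$, where $S_j$ runs through the simple $A$-modules; transporting through $F$ and writing $\Sigma_j:=F(S_j)$, the length of the complex representing $X$ is
\[
\max\{\,c : \Hom_\mathcal{T}(X,\Sigma_j[-c])\neq 0 \text{ for some } j\,\}-\min\{\,c : \Hom_\mathcal{T}(X,\Sigma_j[-c])\neq 0 \text{ for some } j\,\}.
\]

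The existence of a pair $(\mathcal{H},l)$ is the easy half: $T$ is a bounded object of $\mathcal{T}\simeq D^b(\mathcal{H}_0)$, so its $\mathcal{H}_0$-cohomology is concentrated in finitely many consecutive degrees, and after a shift $T\in\bigvee_{m=0}^{l_0}\mathcal{H}_0[m]$. The real content is the identity $\glf A=l+2$ for a suitable heart, which I would obtain by proving $\glf A\le l(\mathcal{H})+2$ for \emph{every} hereditary heart $\mathcal{H}$ and $\glf A\ge l(\mathcal{H})+2$ for a heart of \emph{minimal} width, and then taking $\mathcal{H}$ to minimise $l(\mathcal{H})$.

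For the upper bound, fix $\mathcal{H}$ with $T\in\bigvee_{m=0}^{l}\mathcal{H}[m]$ and write $T\cong\bigoplus_{m=0}^{l}T_m[-m]$ with $T_m=H^m(T)\in\mathcal{H}$. Taking $X\in\mathcal{H}$ indecomposable and decomposing each $\Sigma_j$ into its $\mathcal{H}$-cohomology, the groups $\Hom_\mathcal{T}(X,\Sigma_j[-c])$ reduce, by hereditarity ($\Ext_\mathcal{H}^{\,n}=0$ for $n\ge 2$), to sums of $\Ext_\mathcal{H}^{0}$ and $\Ext_\mathcal{H}^{1}$ terms; consequently the degrees $c$ for which they are nonzero fill an interval whose span exceeds by one the span of the $\mathcal{H}$-cohomological degrees of the family $\{\Sigma_j\}_j$. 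The crux is then a width-duality lemma: the $\mathcal{H}$-cohomology of $\bigoplus_j\Sigma_j$ occupies at most $l+2$ consecutive degrees. I would extract this from the defining pairing $\Hom_\mathcal{T}(T,\Sigma_j[i])\cong\delta_{i,0}\,S_j$, which, together with the confinement of $T$ to $l+1$ consecutive degrees and the vanishing $\Ext_\mathcal{H}^{\,n}=0$ for $n\ge2$, forces the cohomology of each $\Sigma_j$ into a band of span $l+1$ lying just below that of $T$. Combining the two estimates yields $\glf A\le l+2$ for every hereditary heart $\mathcal{H}$.

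For the lower bound, and here the non-hereditary hypothesis enters, I would exhibit, for a heart of minimal width, an indecomposable $X\in\mathcal{H}$ whose minimal complex attains length $l+2$: one chooses $X$ with a nonzero $\Ext^1_\mathcal{H}$ into the top $\mathcal{H}$-cohomology of $\{\Sigma_j\}_j$ and a nonzero $\Hom_\mathcal{H}$ into the bottom one, so that both endpoints of the interval above are realised simultaneously and the resulting complex is indecomposable. If $A$ were hereditary this extremal construction must fail, since then the optimal heart is $\md A$ with $l=0$ and $\glf A=1\neq l+2$; this is exactly the role of the hypothesis. The main obstacle is the width-duality lemma together with this extremal realisation, that is, controlling the precise position of the cohomology of the dual simple objects $\Sigma_j$ relative to $T$ while simultaneously selecting the heart that makes $T$ as narrow as possible. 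The passage between different hereditary hearts is governed by Happel--Reiten--Smal\o{} tilting at torsion pairs, and organising this optimisation is the technical heart of the argument.
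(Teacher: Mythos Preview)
There is nothing to compare against: the paper does not prove this statement. It is quoted as a result of Alvares, Le~Meur and Marcos from \cite{alv-lem-mar} and used as a black box in Section~\ref{sec5}; the companion lemma expressing $\glf A$ in terms of $\ell^{\pm}_T(Y)$ is likewise only cited, not proved.

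Regarding the proposal itself: your outline identifies the right skeleton --- the stalk decomposition of indecomposables in $D^b(\mathcal{H})$, a length formula read off from $\Hom$-vanishing degrees, an upper bound coming from the cohomological width of $T$, and an optimisation over hereditary hearts via HRS tilting --- but the two load-bearing claims are only asserted. Your ``width-duality lemma'' (that the cohomology of the $\Sigma_j$ spans at most $l+2$ consecutive degrees) does not follow just from the pairing $\Hom_\mathcal{T}(T,\Sigma_j[i])\cong\delta_{i,0}S_j$ and hereditarity in the way you indicate: that vanishing constrains only $\Hom$ and $\Ext^1$ between the pieces of $T$ and the pieces of $\Sigma_j$, and deducing a band of width exactly $l+2$ requires a careful two-sided argument you have not supplied. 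More seriously, the lower bound --- finding, for a heart of minimal width, an indecomposable $X\in\mathcal{H}$ that simultaneously realises both extremes --- is simply postulated; you say one ``chooses $X$'' with the required $\Hom$ and $\Ext^1$ properties, but the existence of such an indecomposable $X$, and the fact that minimality of $l$ forces it, is precisely the nontrivial content. As written this is a roadmap, not a proof.
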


Inspired by article the ``Jordan H\"older theorems for derived module categories of piecewise hereditary algebras'' \cite{hug-koe-liu}, Alvares, Le Meur and 
Marcos have used an alternative definition of the strong global dimension \cite{alv-lem-mar}. 
\begin{lema}[Alvares, Le Meur, Marcos \cite{alv-lem-mar}]
Let $T \in D^b(\mathcal{H})$ be a tilting object such that $A = (\End T)^{op}$. Given a object $Y$ in $D^b(\mathcal{H})$, we define
\begin{equation*}
\begin{split}
\ell^{+}_T(Y) &= \sup \{n \in \mathbb{Z} \: | \: \Hom_{D^b({\mathcal{H}})}(Y,T[n]) \neq 0 \} \\
\ell^{-}_T(Y) &= \inf \{n \in \mathbb{Z} \: | \: \Hom_{D^b({\mathcal{H}})}(T[n],Y) \neq 0 \}.
\end{split}
\end{equation*}
Then the strong global dimension of $A$ is  $\glf A = \sup \{ \ell^{+}_T(Y) - \ell^{-}_T(Y) \: | \: Y \in D^b(\mathcal{H}) \}$.
\end{lema}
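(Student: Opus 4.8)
The plan is to move the whole computation through a derived equivalence and reduce it to an elementary reading of the two ends of a minimal complex of projectives. First I would fix the equivalence: by Rickard's theorem \cite{ric} there is a triangle equivalence $F\colon D^b(\mathcal{H}) \xrightarrow{\ \sim\ } D^b(A)$ that sends the tilting object $T$ to the regular module $A_A$, viewed as a stalk complex concentrated in degree $0$. Indeed $\mathbf{R}\Hom_{\mathcal{H}}(T,T)\cong\End_{\mathcal{H}} T$ is concentrated in degree $0$, because $\Ext^i_{\mathcal{H}}(T,T)=0$ for all $i\neq 0$ (for $i=1$ this is the tilting hypothesis, for $i\geq 2$ it is heredity, and for $i<0$ it holds since $T$ is an honest object of $\mathcal{H}$). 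Since $A$ is piecewise hereditary it has finite global dimension, so $D^b(A)\cong K^b(\mathcal{P}_A)$, and $F$ commutes with shifts; writing $P:=FY$ for $Y\in D^b(\mathcal{H})$ we therefore get
\begin{gather*}
\Hom_{D^b(\mathcal{H})}(Y,T[n])\cong\Hom_{K^b(\mathcal{P}_A)}(P,A[n]), \\
\Hom_{D^b(\mathcal{H})}(T[n],Y)\cong\Hom_{K^b(\mathcal{P}_A)}(A[n],P).
\end{gather*}
By the proposition on radical complexes I may assume $P=(P^i,d^i)$ is radical; let $r\leq s$ be the integers with $P^i=0$ for $i<r$ and for $i>s$, and $P^r\neq 0\neq P^s$, so that $\operatorname{l}(P)=s-r$.

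Next I would compute $\ell^{+}_T(Y)$ and $\ell^{-}_T(Y)$ in terms of $r$ and $s$, where $A[n]$ denotes the stalk complex with $A$ in degree $-n$. If $n>-r$, then $A[n]$ is concentrated in degree $-n<r$, where $P$ vanishes, so there is no nonzero chain map $P\to A[n]$ and the Hom space is $0$. If $n=-r$, a chain map $P\to A[-r]$ is an arbitrary homomorphism $f\colon P^r\to A$ (the chain-map condition is automatic since $P^{r-1}=0$), and it is null-homotopic exactly when $f=h\circ d^r$ for some $A$-module map $h\colon P^{r+1}\to A$. Choosing an indecomposable projective direct summand $P_j$ of the nonzero module $P^r$ and taking $f$ to be the composite $P^r\to P_j\hookrightarrow A$ of the projection with the canonical inclusion, the image of $f$ is a direct summand of $A$, hence not contained in $\operatorname{rad} A$; on the other hand, since $P$ is radical one has $\operatorname{im} d^r\subseteq\operatorname{rad} P^{r+1}$, so any $f$ factoring through $d^r$ satisfies $\operatorname{im} f=h(\operatorname{im} d^r)\subseteq h(\operatorname{rad} P^{r+1})\subseteq\operatorname{rad} A$. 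Thus $f$ is not null-homotopic, $\Hom_{K^b(\mathcal{P}_A)}(P,A[-r])\neq 0$, and $\ell^{+}_T(Y)=-r$. The dual argument, applied to chain maps $A[n]\to P$ and to an indecomposable projective summand $P_k$ of $P^s$ (using $\operatorname{im} d^{s-1}\subseteq\operatorname{rad} P^s$), gives $\Hom_{K^b(\mathcal{P}_A)}(A[n],P)=0$ for $n<-s$ and $\neq 0$ for $n=-s$, so $\ell^{-}_T(Y)=-s$.

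Putting the two computations together, $\ell^{+}_T(Y)-\ell^{-}_T(Y)=(-r)-(-s)=s-r=\operatorname{l}(P)=\operatorname{l}(FY)$. Since $F$ is an equivalence, $P=FY$ ranges over all objects of $K^b(\mathcal{P}_A)$ as $Y$ ranges over $D^b(\mathcal{H})$; and for a radical complex the length is the maximum of the lengths of its indecomposable direct summands, so the supremum over all such $P$ coincides with the supremum over indecomposable $P$. Hence
$$
\sup\{\,\ell^{+}_T(Y)-\ell^{-}_T(Y)\ \mid\ Y\in D^b(\mathcal{H})\,\}=\sup\{\,\operatorname{l}(P)\ \mid\ P\in K^b(\mathcal{P}_A)\ \text{indecomposable}\,\}=\glf A,
$$
which is the assertion. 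I expect the one genuinely delicate point to be the claim that the chosen chain maps are not null-homotopic: it is exactly there that minimality (radicality) of $P$ is essential, and the argument must keep straight the module-theoretic fact that a projective direct summand of $A$ escapes $\operatorname{rad} A$ while anything factoring through a differential of a radical complex does not. Everything else is bookkeeping of the degrees in which $P$ is supported.
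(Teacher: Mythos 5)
Your reduction to $K^b(\mathcal{P}_A)$ and the computation of $\ell^{\pm}_T$ are correct and constitute the real content of the lemma (which this paper only quotes from \cite{alv-lem-mar}, so there is no internal proof to compare against): for a nonzero radical complex $P=FY$ supported in degrees $r\leq s$, your projective-summand argument (a map hitting a direct summand of $A$, resp.\ of $P^s$, cannot factor through a differential whose image lies in the radical) does give $\ell^{+}_T(Y)=-r$ and $\ell^{-}_T(Y)=-s$, hence $\ell^{+}_T(Y)-\ell^{-}_T(Y)=\operatorname{l}(P)$.

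The gap is in your final step. The claim that for a radical complex the length is the maximum of the lengths of its indecomposable direct summands is false: the summands need not be supported in overlapping degrees. For example $P=A\oplus A[N]$ is radical (zero differentials) of length $N$, while all of its indecomposable summands have length $0$; so $\sup\{\operatorname{l}(P)\}$ over \emph{all} radical complexes is $+\infty$ whenever $A\neq 0$. Your own computation detects this: $\ell^{+}_T(Y\oplus Y[N])-\ell^{-}_T(Y\oplus Y[N])=\ell^{+}_T(Y)-\ell^{-}_T(Y)+N$, so the supremum over all objects $Y\in D^b(\mathcal{H})$ is infinite; concretely, for $A=K$, $\mathcal{H}=\md K$, $T=K$ and $Y=K\oplus K[5]$ one gets $5>0=\glf K$. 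Hence the formula can only hold — and in \cite{alv-lem-mar} is proved — with $Y$ ranging over \emph{indecomposable} objects of $D^b(\mathcal{H})$ (equivalently shifts of indecomposables of $\mathcal{H}$; note $\ell^{+}_T-\ell^{-}_T$ is invariant under shifting $Y$). With that restriction your argument closes immediately and needs no direct-sum claim: $F$ preserves indecomposability, every indecomposable radical complex in $K^b(\mathcal{P}_A)$ is isomorphic to $FY$ for some indecomposable $Y$, and therefore $\sup\{\ell^{+}_T(Y)-\ell^{-}_T(Y)\}=\sup\{\operatorname{l}(P) \mid P \text{ indecomposable}\}=\glf A$. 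So: delete the false claim, restrict the supremum to indecomposables, and the rest of your proof stands.
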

We want to give an upper bound for the strong global dimension of the PHI algebras. We have a more general result.

\begin{teo}
Let $A$ be a sincere, piecewise hereditary algebra. Then 
$$\glf A \leq 3 \; .$$
\end{teo}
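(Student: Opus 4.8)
The plan is to combine the two tools already at hand: the characterization of the strong global dimension via the functions $\ell^{\pm}_T$ (the Lemma of Alvares--Le Meur--Marcos), and Ladkani's theorem that a sincere piecewise hereditary algebra has ordinary global dimension $\leq 3$. Since $A$ is piecewise hereditary, fix a hereditary abelian category $\mathcal{H}$ with $D^b(A)\cong D^b(\mathcal{H})$ and a tilting object $T\in D^b(\mathcal{H})$ with $A=(\End T)^{op}$; by Happel's theorem we may take $\mathcal{H}=\md KQ$ or $\mathcal{H}=\Coh\mathbb{X}$. We want to show $\ell^{+}_T(Y)-\ell^{-}_T(Y)\leq 3$ for every indecomposable $Y\in D^b(\mathcal{H})$. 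Normalizing, we may assume $\ell^{-}_T(Y)=0$, i.e. $\Hom(T[n],Y)=0$ for $n<0$ and $\Hom(T,Y)\neq 0$; we must then prove $\Hom(Y,T[n])=0$ for $n\geq 4$.

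The first step is to translate $\ell^{-}_T(Y)=0$ into a statement about $\md A$ via the equivalence $R\Hom(T,-)\colon D^b(\mathcal{H})\to D^b(A)$. Writing $M=R\Hom(T,Y)\in D^b(A)$, the vanishing $\Hom_{D^b(\mathcal{H})}(T[n],Y)=\Hom_{D^b(A)}(A,M[-n])=H^{-n}(M)$ says exactly that $M$ has cohomology concentrated in nonnegative degrees, with $H^0(M)\neq 0$; and $\ell^{+}_T(Y)$ becomes $\sup\{n\mid \Hom_{D^b(A)}(M,A[n])\neq 0\}$. So the claim reduces to: if $M\in D^b(A)$ has cohomology in degrees $\geq 0$ and $H^0(M)\neq 0$, then $\Hom_{D^b(A)}(M,A[n])=0$ for $n\geq 4$. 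Here is where sincerity must enter. Because $A$ is sincere, it carries the canonical sincere indecomposable module (or, more to the point, Ladkani's hypothesis is met), so $\gl A\leq 3$; hence $\din_A S\leq 3$ for every simple $S$, and in particular every $A$-module has injective dimension $\leq 3$. The second step is to run the hyperext spectral sequence $E_2^{p,q}=\bigoplus_i \Ext^p_A(H^{-i+q}(M)\text{ placed suitably},\ )\Rightarrow \Hom_{D^b(A)}(M,A[p+q])$ — more cleanly, filter $M$ by its cohomology and use that $\Hom_{D^b(A)}(N[-i],A[n])=\Ext^{n-i}_A(N,A)$ — to see that a nonzero $\Hom_{D^b(A)}(M,A[n])$ forces $\Ext^{n-i}_A(H^{i}(M),A)\neq 0$ for some $i\geq 0$, hence $n-i\leq \dpr_A H^i(M)\le \gl A\le 3$, giving $n\le 3+i$. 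By itself this is not enough; the extra input needed is that the \emph{top} cohomology $H^{i}(M)$ for large $i$ cannot contribute to high $\Ext^{n-i}$, because $H^0(M)\neq 0$ pins the complex near degree $0$.

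The real work, and the step I expect to be the main obstacle, is ruling out the "degree-$4$ spread" when cohomology is not concentrated in a single degree — i.e. showing that one cannot simultaneously have $H^0(M)\neq 0$ and a class in $\Ext^{4-i}_A(H^i(M),A)$ surviving all differentials. I would handle this by passing back through the equivalence to $\mathcal{H}$, where the structure is rigid: a tilting object $T$ in $D^b(\mathcal{H})$ lives in at most two consecutive shifts of $\mathcal{H}$ when $A$ is not hereditary (this is the Alvares--Le Meur--Marcos theorem, giving $\glf A=l+2$ with the tilting object in $\bigvee_{m=0}^l\mathcal{H}[m]$), so it suffices to show $l\leq 1$ for sincere $A$. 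Now $l\leq 1$ should be forced by sincerity together with $\gl A\le 3$: if $T\in\mathcal{H}\vee\mathcal{H}[1]\vee\mathcal{H}[2]$ with genuinely nonzero pieces in all three shifts, then decomposing $T=T_0\oplus T_1[1]\oplus T_2[2]$ with $T_j\in\mathcal{H}$ and computing $A=\End T$ as a "triangular" matrix algebra with $\Hom_\mathcal{H}$ and $\Ext^1_\mathcal{H}$ entries, one produces a simple $A$-module of projective dimension exactly $3$ whose support omits some vertex — contradicting sincerity, which (via Ladkani/the directed-module arguments available for the representation-finite case, and via the weighted-projective-line classification otherwise) forbids $A$ from decomposing that far. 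So the endgame is: (i) invoke Alvares--Le Meur--Marcos to reduce $\glf A\leq 3$ to $l\leq 1$; (ii) suppose $l=2$ and write $T=T_0\oplus T_1[1]\oplus T_2[2]$; (iii) extract from the three-step filtration a simple with projective and injective dimension $3$ sitting at a source/sink, and show its complementary vertex is not in the support of the canonical sincere module, contradicting Proposition~\ref{sincero}. I expect step (iii) — making the non-sincerity explicit from $l=2$ — to be where all the care is needed, since it requires understanding precisely how the three blocks $T_0,T_1,T_2$ glue, and this is the only place the hypothesis "sincere" (as opposed to merely "piecewise hereditary") is used.
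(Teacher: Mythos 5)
There is a genuine gap. Your own outline identifies the correct reduction (via Alvares--Le Meur--Marcos it suffices to show that the tilting object $T=FA$ lies in at most two consecutive shifts of $\mathcal{H}$, i.e.\ $l\leq 1$), but the one step where sincerity must do the work --- ruling out $l=2$ --- is never actually carried out: you only sketch a hoped-for contradiction in which $T=T_0\oplus T_1[1]\oplus T_2[2]$ would ``produce a simple $A$-module of projective dimension exactly $3$ whose support omits some vertex,'' and you yourself flag this as the main obstacle. As stated it does not work without substantial extra input: sincere piecewise hereditary algebras of global dimension $3$ do exist (the paper exhibits a PHI algebra with $\gl=\glf=3$), so the mere existence of a simple of projective dimension $3$ contradicts nothing, and you give no argument for why the three blocks $T_0,T_1,T_2$ would force a vertex outside the support of a sincere indecomposable module. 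Likewise, your first line of attack (the $\ell^{\pm}_T$ characterization plus Ladkani's bound $\gl A\leq 3$ and a hypercohomology/filtration argument) only yields $n\leq 3+i$, which, as you concede, is not enough; and Ladkani's theorem is in any case not needed --- in the paper it comes out as a corollary of this very result.

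What fills the gap is a short direct computation with the sincere module itself, which your proposal never exploits beyond invoking ``sincerity'' abstractly. Let $M$ be a sincere indecomposable module and $A=P_1\oplus\dotsb\oplus P_n$ the decomposition into indecomposable projectives. Sincerity gives $\Hom_A(P_i,M)\neq 0$ for every $i$; transporting through the equivalence $F\colon D^b(A)\to D^b(\mathcal{H})$ and writing $FP_i=T_i[m_i]$, $FM=M'[m]$ with $T_i,M'\in\mathcal{H}$, the nonvanishing of $\Hom_{D^b(\mathcal{H})}(T_i[m_i],M'[m])$ together with hereditariness of $\mathcal{H}$ (only $\Hom$ and $\Ext^1$ survive between stalk objects) forces $m-m_i\in\{0,1\}$ for every $i$. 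Hence all indecomposable summands of $T=FA$ lie in $\mathcal{H}[m-1]\vee\mathcal{H}[m]$, which is exactly the statement $l\leq 1$ you were after, and then $\glf A\leq l+2\leq 3$. So your overall architecture is compatible with the paper's proof, but the decisive step is missing, and the positive argument above replaces the unproved contradiction scheme in your items (ii)--(iii).
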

\begin{proof} 
We consider $M$ a sincere module and $A=P_1 \oplus \dotsc \oplus P_n$ the decomposition of $A$  in indecomposable modules. 

By hypothesis, there exist a quasi-inverse functor $F \colon D^b(A) \to D^b(\mathcal{H})$ which makes the triangulated equivalence, where $\mathcal{H}$ is a hereditary category.

Since $M$ is sincere, we have that
$$
0 \neq \Hom_A (P_i,M) = \Hom_{D^b(A)} (P_i,M) \cong \Hom_{D^b(\mathcal{H})} (FP_i,FM). 
$$

Let $FA=T$ be a tilting object in $D^b(\mathcal{H})$ such that $(\End T)^{op} \cong A$. For each $i \in \{1,\dotsc ,n\}$, we denote $FP_i=T_i[m_i]$ the indecomposable direct summand of $T$ where $T_i \in \mathcal{H}[0]$ such that $D^b(\mathcal{H})= \bigvee_{m \in \mathbb{Z}} \mathcal{H}[m]$. Also we denote $FM=M'[m]$ where $M' \in \mathcal{H}[0]$.

Now, for each $i \in \{1,\dotsc ,n\}$, we have the following:
$$
0 \neq \Hom_{D^b(\mathcal{H})} (FP_i,FM) = \Hom_{D^b(\mathcal{H})}(T_i[m_i],M'[m]).
$$ 
Therefore $m_i=m$ or $m_i=m-1$, for each $i \in \{1,\dotsc ,n\}$, implying that the indecomposables direct summands of $T$ are in $\mathcal{H}[m-1]$ or $\mathcal{H}[m]$ which  shows that $\glf A \leq 3$.
\end{proof}

As an immediate consequence of previous theorem, we get the following:
\begin{cor}
Let $K\Delta$ be a PHI algebra . Then $\glf K\Delta \leq 3$.
\end{cor}

Another corollary is what has already been proved by Ladkani in \cite{lad2}. 
\begin{cor}
Let $A$ be a sincere, piecewise hereditary algebra. Then 
$$
\gl A \leq 3 \; .
$$
\end{cor}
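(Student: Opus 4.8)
The final statement to prove is: \emph{Let $A$ be a sincere, piecewise hereditary algebra. Then $\gl A \leq 3$.} This is presented as a corollary, and the natural plan is to derive it directly from the preceding theorem ($\glf A \leq 3$) together with the elementary comparison between global dimension and strong global dimension.

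The plan is as follows. First I would recall the basic inequality $\gl A \leq \glf A$ for any finite-dimensional algebra $A$. This is standard: given a simple module $S$ with $\dpr_A S = d$, take its minimal projective resolution $0 \to P_d \to \dotsb \to P_0 \to S \to 0$ and truncate it to the radical complex $P_\bullet = (P_d \to \dotsb \to P_0)$ sitting in degrees $-d,\dotsc,0$. Minimality of the resolution forces this complex to be radical, and an indecomposable direct summand of $P_\bullet$ in $K^b(\mathcal{P}_A)$ must still have $P_d$-part and $P_0$-part nonzero (since $P_d \to P_{d-1}$ is a monomorphism into the radical and $P_0 \to S$ is a projective cover, the complex has no contractible summands at either end), so there is an indecomposable complex of length $d$; hence $\glf A \geq d$ for every $d$ achieved, giving $\gl A = \sup_S \dpr_A S \leq \glf A$. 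Second, I would simply invoke the theorem just proved, $\glf A \leq 3$ for $A$ sincere and piecewise hereditary, and combine: $\gl A \leq \glf A \leq 3$.

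Alternatively — and this may be what the authors intend given the phrasing ``Another corollary is what has already been proved by Ladkani'' — one can read off the bound directly from the proof of the theorem rather than through the general inequality. In that proof one shows, via the quasi-inverse $F \colon D^b(A) \to D^b(\mathcal{H})$ and the sincere module $M$, that all indecomposable summands of the tilting object $T = FA$ lie in $\mathcal{H}[m-1] \vee \mathcal{H}[m]$ for a single integer $m$. By the theorem of Alvares--Le Meur--Marcos quoted in the excerpt, writing $T \in \bigvee_{j=0}^{l}\mathcal{H}[j]$ with $l$ minimal gives $\glf (\End_{\mathcal{T}}T)^{op} = l+2$ when the endomorphism algebra is not hereditary, and here $l \leq 1$; but in fact the same analysis controls the ordinary projective dimensions: for a simple $A$-module $S$, the minimal projective resolution corresponds under $F$ to a complex built from the $FP_i = T_i[m_i]$, all concentrated in two consecutive shifts, and $\Ext$-vanishing in the hereditary category $\mathcal{H}$ bounds the projective dimension by $3$. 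Either route works; the cleanest is the first, citing $\gl A \leq \glf A$.

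I do not expect a genuine obstacle here — the statement is a corollary in the literal sense. The only point requiring a sentence of care is the justification that truncating a minimal projective resolution yields an \emph{indecomposable} radical complex of the expected length (one must rule out that the length-$d$ complex decomposes in $K^b(\mathcal{P}_A)$ into shorter pieces losing the extreme terms), but this follows from minimality exactly as in the standard proof that $\gl A \leq \glf A$, and for sincere piecewise hereditary $A$ one could alternatively cite Ladkani's original argument in \cite{lad2}. So the proof will be short: state $\gl A \leq \glf A$, apply the theorem, conclude.
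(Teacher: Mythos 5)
Your first route is exactly the paper's proof: the authors dispose of the corollary in one line by invoking the inequality $\gl A \leq \glf A$ together with the theorem $\glf A \leq 3$, just as you do (your sketch of why truncated minimal resolutions give indecomposable radical complexes is the standard justification the paper leaves implicit). Correct, and essentially the same approach.
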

\begin{proof} 
This is a consequence of inequality $\gl A \leq \glf A$.
\end{proof}

\begin{example}
The incidence algebra, whose quiver is given below, is a PHI algebra which has global dimension equal to three, and therefore it also has strong global dimension three.
$$
\xymatrix{
						&\bullet \ar[ddr] \ar[r]&\bullet \ar[dr]&\\
\bullet \ar[ur] \ar[dr]	&						&				&\bullet \\
						&\bullet \ar[uur] \ar[r]&\bullet \ar[ur]&
}
$$
\end{example}

Our result \ref{dimglFSC} implies that  the family of PHI algebras that has  global dimension and strong global dimension equal to three are not strongly 
simply connected. 

It would be interesting to give a characterization of this family of algebras? Observe that the PHI algebra above is in this class.

\section{PHI algebras of sheaf type} \label{Phiasfeixe}
In 1987, Geigle and Lenzing introduced category of coherent sheaves on the weighted projective line 
$\mathbb{X}$\ \cite{gei-len}, denoted by  $\Coh \mathbb{X}$. This is an abelian hereditary category which is
derived equivalent to category of modules over a canonical algebra $C(p,\lambda)$ \cite{gei-len}. Some basic references to this subject are  ``Tame 
algebras and integral quadratic forms'' \cite{rin} and ``Elements of the representation theory of associative algebras, volume three'' \cite{sim-sko3}.

%
%

The article ``A class of weighted projective curves arising in representation theory of finite dimensional algebras'' \cite{gei-len} 
by Geigle and Lenzing, and the paper ``Introduction to coherent sheaves on weighted projective line'' \cite{che-kra} by Chen and Krause are important 
texts for an introduction to the theory of category of coherent sheaves on weighted projective line.  We will use the characterization of the category 
$\Coh \mathbb{X}$ exposed in ``Hereditary noetherian categories with a tilting complex'' \cite{len} by Lenzing.

The PHI algebras $K\Delta \cong' \Coh \mathbb{X}$ were studied in the article ``Which canonical algebras are derived equivalent to incidence algebras of posets?'' \cite{lad1} of Ladkani, which in part, relates to our work.
This article was published in 2008, and presents a direct study of PHI algebras of sheaves type or equivalently PHI algebras of canonical type, 
nomenclature influenced by the derived equivalence between the categories $\Coh (\mathbb{X})$ and the category of modules over a canonical algebra $C(p,\lambda)$. The main theorem of this article follows:

\begin{teo}[Ladkani \cite{lad1}]
Let $K\Delta$ be an incidence algebra. If $C(p,\lambda) \cong' K\Delta$ then $p=(p_1,p_2,p_3)$ or $p=(p_1,p_2)$, where $p_i \geq 2$. For 
each weighted projective line with $p=(p_1,p_2,p_3)$ or $p=(p_1,p_2)$, where $p_i \geq 2$, there exist at least one  incidence algebra $K\Delta$ such that $C(p,\lambda) \cong' K\Delta$.
\end{teo}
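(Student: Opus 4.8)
The plan is to treat the two implications separately, using throughout that every canonical algebra satisfies $C(p,\lambda)\cong'\Coh\mathbb{X}(p_1,\dots,p_t)$, so that the statement is really about which weighted projective lines $\mathbb{X}$ admit an incidence algebra $K\Delta$ with $\Coh\mathbb{X}\cong' K\Delta$. First I would normalize: a weight equal to $1$ can be suppressed without changing $\mathbb{X}$, so I may assume all $p_i\ge 2$, and with this normalization the multiset $\{p_1,\dots,p_t\}$ is a derived invariant of $\Coh\mathbb{X}$ (it is the multiset of ranks of the exceptional tubes of $\Coh\mathbb{X}$). Thus the content of the forward implication is exactly the bound $t\le 3$. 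I would also record at the outset that such a $K\Delta$ is a sincere, piecewise hereditary algebra of sheaf type, so $\gl K\Delta\le 3$ and $\glf K\Delta\le 3$ by the results of Sections~\ref{sec4} and~\ref{sec5}; this alone will not yield $t\le 3$, but it constrains the tilting object realizing $K\Delta$ in $D^b(\Coh\mathbb{X})$ to lie in two consecutive shifts of a hereditary slice, exactly as in the proof in Section~\ref{sec5}.

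For the forward direction ($t\le 3$) I would argue by derived invariants against the incidence-algebra hypothesis. If $K\Delta=KQ/I$ is an incidence algebra, then ordering the vertices by the partial order makes the Cartan matrix $C=C_{K\Delta}$ an upper unitriangular $(0,1)$-matrix, $C^{-1}$ is again integral, and the Coxeter transformation is $\Phi_{K\Delta}=-C^{-\mathrm{T}}C$. Since the conjugacy class of the Coxeter transformation is a derived invariant, $\chi_{K\Delta}(T):=\det(T\cdot\mathrm{Id}-\Phi_{K\Delta})$ must equal the Coxeter polynomial of $C(p,\lambda)$, for which there is the classical closed formula in terms of the factors $1+T+\dots+T^{p_i-1}$ (see \cite{rin, gei-len}). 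The strategy is to compare the two: from $\chi_{K\Delta}$ one extracts rigid data — the multiplicities of $T-1$ and $T+1$ as roots, the traces $\operatorname{tr}\Phi_{K\Delta}^{k}$ for small $k$ (which are bounded in terms of $|Q_0|$ and $|Q_1|$ because $C$ has $(0,1)$ entries), and the extreme coefficients of $\chi_{K\Delta}$ — and one checks that the canonical Coxeter polynomial with $t\ge 4$ weights cannot satisfy all of these constraints simultaneously, whatever the poset $\Delta$ is. This is the main obstacle of the proof: it amounts to a finite but delicate case analysis in which one must rule out every $\Delta$ with $t\ge 4$ while keeping in mind that $K\Delta$ need not be the canonical algebra itself — it is merely derived equivalent to it — so that ``schurian'' and ``$(0,1)$-Cartan matrix'' are essentially the only genuinely new hypotheses one may exploit.

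For the converse I would exhibit explicit posets. When $t=2$ the task is to produce an incidence algebra in the derived class of the tame hereditary algebra $C(p_1,p_2)=K\widetilde{A}_{p_1,p_2}$; this class contains poset algebras coming from suitable ``commutative ladder/fence'' posets, and the derived equivalence is verified by writing down an explicit tilting complex (equivalently, a sequence of APR-tilts/reflections) linking $K\Delta$ to $K\widetilde{A}_{p_1,p_2}$. When $t=3$ I would, for each $(p_1,p_2,p_3)$ with $p_i\ge 2$, build a poset $\Delta$ by an analogous three-armed recipe, imposing a commutativity relation on one pair of arms so that the resulting algebra is genuinely a Hasse-diagram algebra, and then exhibit a tilting object in $\Coh\mathbb{X}(p_1,p_2,p_3)$ — equivalently a tilting complex over $C(p,\lambda)$ supported in two consecutive degrees, consistent with $\glf K\Delta\le 3$ — whose endomorphism algebra is $K\Delta$ (these are the families of \cite{lad1}). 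This direction is constructive: the creative step is choosing the posets so that the derived equivalence exists, and once the candidates are fixed the verification is a routine, if lengthy, computation with tilting complexes.

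In summary, the cheap inputs are the reduction to $p_i\ge 2$ and the fact that the weight sequence is a derived invariant; the routine-but-lengthy input is the explicit construction of incidence algebras for $t=2$ and $t=3$; and the crux is the impossibility argument for $t\ge 4$, where the $(0,1)$-Cartan/schurian structure of incidence algebras must be played off against the explicit Coxeter polynomial (or Euler form) of the canonical algebra.
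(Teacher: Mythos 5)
This statement is quoted by the paper from Ladkani \cite{lad1} without proof, so there is no internal argument to compare against; your proposal has to stand on its own, and at its crux it does not. The forward implication (no incidence algebra is derived equivalent to $C(p,\lambda)$ with $t\ge 4$ weights) is exactly the hard part, and your text never actually proves it: you list candidate derived invariants extracted from the Coxeter polynomial (multiplicities of the roots $\pm 1$, traces of small powers of $\Phi_{K\Delta}$, extreme coefficients) and then assert that ``one checks'' these are incompatible with $(T-1)^2\prod_i(1+T+\dots+T^{p_i-1})$ for $t\ge 4$. No such incompatibility is derived, and it is not evident that these particular invariants suffice: for instance, matching the subleading coefficient only forces $\operatorname{tr}\Phi_{K\Delta}=2-t$, i.e.\ $\sum_{x\preceq y}\mu(x,y)=t-2$, which by itself excludes nothing. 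Moreover the analysis you describe cannot be ``finite'': the weight sequences with $t\ge 4$ form an infinite family, so one needs a single uniform obstruction playing the $(0,1)$ unitriangular Cartan structure of $K\Delta$ against the canonical Coxeter data, not a case check. Supplying that obstruction is precisely the content of Ladkani's theorem, and it is missing from the proposal.

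The reverse implication is treated more faithfully in spirit (Ladkani does proceed by exhibiting explicit posets and tilting objects realizing every weight type $(p_1,p_2)$ and $(p_1,p_2,p_3)$ with $p_i\ge 2$), but here too you only describe a ``ladder/fence'' and ``three-armed'' recipe without specifying the posets or verifying the tilting computation, so even the constructive half is a plan rather than a proof. The genuinely correct and useful parts of the proposal are the normalization $p_i\ge 2$ and the observation that the weight multiset is a derived invariant of $\Coh\mathbb{X}$, which correctly reduces the theorem to the bound $t\le 3$ plus a realization statement; everything after that reduction still needs to be done.
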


\begin{definition}[Euler Characteristic, domestic, tubular and wild]
\mbox{}
\begin{enumerate}

\item
The Euler characteristic $\mathcal{X}(\mathbb{X})$ of a weighted projective line is defined by the formula:
$$
\mathcal{X}(\mathbb{X})= 2 - \sum_{i=1}^n (1- \frac{1}{p_i}).
$$

\item The category $\Coh (\mathbb{X})$ is called domestic, tubular or wild, if its Euler characteristic is respectively bigger than zero, equal zero or smaller than zero. 
\end{enumerate}
\end{definition}

An algebra $A$ is called a \emph{piecewise hereditary algebra of domestic-sheaf type} or \emph{of tubular-sheaf type} or \emph{of wild-sheaf type} 
if $A \cong' \Coh (\mathbb{X})$ where $\Coh (\mathbb{X})$ is of domestic, tubular  or  wild type, respectively.

For the purpose of exhibiting some families of PHI algebras of sheaves type, 
we will study one-point extension. 
The papers of Barot, De la Pe\~na and Lenzing (\cite{bar-len1}, \cite{bar-len2} and \cite{len-pen}) provide conditions on the piecewise hereditary algebras $A$ and in the $A$-modules $M$ in order that $A[M]$ also results a piecewise hereditary. 

The next example is an example of a one point extension of a PHI algebra by its canonical sincere module, in which the one point extension is also PHI. 
It is clear that the one point extension, in the example is an incidence algebra, and we postponed the proof that it is also piecewise hereditary to \ref{phiawild}.
\begin{example}
Let $M$ be the canonical sincere module (see definition \ref{sincero}) over the algebra described below, by quiver and relation, and consider the one-point extension $K\Delta[M]$.

\begin{align*}
K\Delta :
\begin{aligned}
\xymatrix@R8pt@C10pt{
\bullet \ar[dd]	&& \bullet \ar[ll] \ar[dr]	&							& \bullet \ar[dl]\\
				&&							& \bullet \ar[dl] \ar[dr]	&\\
\bullet \ar[rr] && \bullet \ar@{.}[uu]		&							&\bullet
}
\end{aligned}
&&
K\Delta[M] :
\begin{aligned}
\xymatrix@R8pt@C10pt{
				&& 							& \star \ar[dl] \ar[dr] \ar@{.}[dd]	&\\
\bullet \ar[dd]	&& \bullet \ar[ll] \ar[dr]	&									& \bullet \ar[dl]\\
				&&							& \bullet \ar[dl] \ar[dr]			&\\
\bullet \ar[rr] && \bullet \ar@{.}[uu]		&									&\bullet
}
\end{aligned}
\end{align*}
We observe that $K\Delta[M]$ is an incidence algebra that has a poset with a unique maximal element represented by the vertex $\star$.
\end{example}

We denote $\Coh (\mathbb{X})_+$ (resp. $\Coh (\mathbb{X})_0$) the full subcategory of all vector bundles (resp. sheaves of finite length) on $\mathbb{X}$. 

Assume that we have a triangular equivalence:
$$
F \colon D^b(K\Delta) \to D^b(\Coh (\mathbb{X})).
$$

Inspired by the work of Lenzing and Skowro\'nski, \cite{len-sko}, we will show in proposition, \ref{bundle}, that the canonical sincere module $M$ is associated, 
to a vector bundle, via the canonical equivalence on the derived categories.


\begin{prop}[Lenzing-Skowro\'nski \cite{len-sko}]
Let $B$ be a representation-infinite, quasi-tilted algebra of sheaf type, obtained from the weighted projective line $\mathbb{X}$, via a tilting object $T$. 

Then each indecomposable $B$-module belongs to exactly one of the subcategories

\begin{enumerate} [\itshape a)]
\item $\md^l_0(B)$ consisting of all $Y[-1]$ where $Y$ in $\Coh (\mathbb{X})'_0$ satisfies \\ $\Hom_{\Coh (\mathbb{X})} (T_+,Y)=0$ and $\Ext^1_{\Coh (\mathbb{X})}(T'_0,Y)=0$;
\item $\md_+(B)$ consisting of all $Y$ from $T''^{\perp}_0 \cap \Coh (\mathbb{X})_+$ satisfying $\Ext^1_{\Coh (\mathbb{X})}(T_+,Y)=0$;
\item $\md^c_0(B)$ consisting of all $Y$ from $\Coh (\mathbb{X})_0$ satisfying $\Hom_{\Coh (\mathbb{X})} (T'_0,Y)=0$ and \\
$\Ext^1_{\Coh (\mathbb{X})}(T''_0,Y)=0$;
\item $\md_-(B)$ consisting of all $Z[1]$ with $Z$ in $T'^{\perp}_0 \cap \Coh (\mathbb{X})_+$ satisfying \\$\Hom_{\Coh (\mathbb{X})} (T_+,Z)=0$ ;
\item $\md^r_0(B)$ consisting of all $Z[1]$ with $Z$ in $\Coh (\mathbb{X})''_0$ satisfying $\Hom_{\Coh (\mathbb{X})} (T_+,Z)$ $=0$ and $\Hom_{\Coh (\mathbb{X})}(T''_0,Z)=0$.
\end{enumerate}
\end{prop}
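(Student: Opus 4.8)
\emph{Strategy.} The plan is to transport the whole module category of $B$ into $D^b(\Coh\mathbb{X})$ along the derived equivalence, and then read the five families off from the interaction of two structures available there: the bounded $t$-structure whose heart is $\md B$, and the intrinsic splitting of $\Coh\mathbb{X}$ into vector bundles and sheaves of finite length. Concretely, since $B$ is quasi-tilted of sheaf type we may present it as $B=\End_{\Coh\mathbb{X}}(T)$ for a tilting object $T$, and by Rickard's theorem fix a triangle equivalence $F\colon D^b(B)\to D^b(\Coh\mathbb{X})$ with $FB\cong T$. By the Happel--Reiten--Smal\o\ description of tilting in a hereditary category, $F$ identifies $\md B$ with the heart $\mathcal{C}$ of the $t$-structure obtained from $\Coh\mathbb{X}[0]$ by tilting at the torsion pair $(\mathcal{T},\mathcal{F})$, where $\mathcal{T}=\{X\in\Coh\mathbb{X}\mid\Ext^1_{\Coh\mathbb{X}}(T,X)=0\}$ and $\mathcal{F}=\{X\in\Coh\mathbb{X}\mid\Hom_{\Coh\mathbb{X}}(T,X)=0\}$. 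Because $\Coh\mathbb{X}$ is hereditary, every indecomposable of $D^b(\Coh\mathbb{X})$ is a shift $X[n]$ of an indecomposable sheaf $X$, and $\mathcal{C}$ meets only the shifts coming from $\mathcal{F}$ on one side and from $\mathcal{T}$ on the adjacent side; this is the first, coarse two-block decomposition of the indecomposable $B$-modules, and it is automatically an exclusive one since $\mathcal{T}\cap\mathcal{F}=0$ and the shift records the block.

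Next I would superimpose the bundle / finite length dichotomy. One uses that $\Coh\mathbb{X}=\Coh\mathbb{X}_0\vee\Coh\mathbb{X}_+$ as additive categories, so each indecomposable sheaf is either a vector bundle or of finite length; intersecting this alternative with the two blocks above separates the bundle families $\md_-(B)$, $\md_+(B)$ from the finite length families $\md^{l}_0(B)$, $\md^{c}_0(B)$, $\md^{r}_0(B)$. To make the membership conditions explicit I would decompose the tilting object itself as $T=T_+\oplus T_0$ into its bundle part and its finite length part, and then use Serre duality on $\Coh\mathbb{X}$ --- together with the fact that $\Hom(\text{bundle},\text{finite length})$ and $\Ext^1(\text{finite length},\text{bundle})$ are the only nonzero mixed groups --- to rewrite the conditions $\Ext^1(T,-)=0$ and $\Hom(T,-)=0$, when restricted to bundles, as conditions involving only $T_+$. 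This is exactly what produces the stated vanishings $\Ext^1_{\Coh\mathbb{X}}(T_+,Y)=0$ and $\Hom_{\Coh\mathbb{X}}(T_+,Z)=0$ attached to the bundle families (and the $\Hom_{\Coh\mathbb{X}}(T_+,-)=0$ appearing in (a) and (e)).

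For the finite length part the argument must be carried out tube by tube. The summands of $T_0$ lie in finitely many tubes of $\Coh\mathbb{X}_0$, in each of which $T_0$ restricts to a partial tilting (exceptional) object. I would split $T_0=T'_0\oplus T''_0$, and introduce $\Coh\mathbb{X}'_0$, $\Coh\mathbb{X}''_0$, according to the position of these summands inside the wings of their tubes --- a ``left'' part and a ``right'' part of each cut tube, the tubes disjoint from $T_0$ being left untouched. Using the structure of perpendicular categories inside a stable tube, the conditions $\Hom(T'_0,-)=0$ and $\Ext^1(T''_0,-)=0$ (equivalently $Z\in T'^{\perp}_0$, $Z\in T''^{\perp}_0$) carve out precisely the left stub, the central subtube (enlarged by the untouched tubes), and the right stub; combined with torsion versus torsion-free membership and with the shift from the first step this yields the three finite length families (a), (c), (e) with exactly the perpendicularity conditions displayed. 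Exhaustivity of the list is then just the case analysis of the last two steps applied to an arbitrary indecomposable sheaf, and exclusivity follows because the shift separates the two blocks, the bundle / finite length alternative is a genuine dichotomy, and inside a cut tube $T'_0$ and $T''_0$ sit on opposite sides so that the two perpendicularity conditions are incompatible; the degenerate cases of a ``maximally cut'' tube and of a tube disjoint from $T_0$ must be inspected separately.

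\textbf{Main obstacle.} The formal scaffolding --- the derived equivalence, the Happel--Reiten--Smal\o\ heart, and the hereditariness of $\Coh\mathbb{X}$ forcing indecomposables to be shifts of sheaves --- is routine. The real work is the third step: defining the decomposition $T_0=T'_0\oplus T''_0$ and the categories $\Coh\mathbb{X}'_0$, $\Coh\mathbb{X}''_0$ intrinsically, and checking that the homological conditions in (a)--(e) match membership in $\mathcal{T}$ resp.\ $\mathcal{F}$ and in the three tube stubs \emph{on the nose}. This is where the combinatorics of perpendicular categories in stable tubes, and the comparison of the complicated $\Ext^1_{\Coh\mathbb{X}}(T,-)$ with the simpler $\Hom_{\Coh\mathbb{X}}(T_+,-)$ and $\Hom_{\Coh\mathbb{X}}(T'_0,-)$, carries the argument.
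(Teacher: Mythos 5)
The paper gives no proof of this proposition: it is quoted from Lenzing--Skowro\'nski \cite{len-sko} as an external result, so there is no internal argument to compare yours against; I can only assess your sketch on its own terms, and it has a structural gap at the very first, allegedly ``routine'' step.

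You present $B$ as $\End_{\Coh \mathbb{X}}(T)$ for a tilting object $T$ \emph{in the abelian category} $\Coh \mathbb{X}$ and identify $\md B$ with the Happel--Reiten--Smal\o\ tilt of $\Coh \mathbb{X}$ at the torsion pair $(\mathcal{T},\mathcal{F})$ determined by $T$. Under that hypothesis the indecomposable $B$-modules correspond to indecomposable sheaves in exactly two consecutive shifts (torsion class in one degree, torsion-free class in the adjacent one), so your coarse two-block decomposition can never account for the three shifts $[-1]$, $[0]$, $[1]$ appearing in the statement; in particular the family in a), consisting of objects $Y[-1]$, would be forced to be empty. A representation-infinite quasi-tilted algebra of sheaf type is in general \emph{not} the endomorphism algebra of a tilting sheaf: the tilting object lives in $D^b(\Coh \mathbb{X})$ and decomposes as $T=T'_0[-1]\oplus T_+\oplus T''_0$ --- exactly the decomposition this paper itself quotes from \cite{len-sko} in the proof of Proposition \ref{bundle} --- equivalently, the hereditary heart $\mathcal{H}$ with $B=\End_{\mathcal{H}}(T)$ is obtained from $\Coh \mathbb{X}$ by shifting the tubes supporting $T'_0$, and it is this two-degree spread of $T$ that produces the three-degree spread of $\md B$ and hence five families rather than four. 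Relatedly, your proposed intrinsic definition of the splitting $T_0=T'_0\oplus T''_0$ (``left'' versus ``right'' parts of the wing inside each cut tube) is not the correct dichotomy: $T'_0$ and $T''_0$ are distinguished by the degree ($-1$ versus $0$) in which their summands occur, i.e., by whether the corresponding branch is inserted by coray or by ray insertion, with the semiregularity constraint that in any one tube only one of the two types occurs; the categories $\Coh(\mathbb{X})'_0$ and $\Coh(\mathbb{X})''_0$ are then the finite-length sheaves supported on the respective sets of tubes. Your HRS picture is accurate precisely in the almost concealed-canonical case $T'_0=0$ (which the paper treats separately in the proof of Proposition \ref{bundle}), and the tube-by-tube perpendicular-category analysis you defer as the ``main obstacle'' is real work that remains undone; but before that, the scaffolding itself needs to be corrected as above for the statement to be reachable at all.
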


\begin{prop} \label{bundle}
Let $A$ is a representation-infinite quasi-tilted algebra of sheaf type with a sincere indecomposable module $M$. 
Assume $F \colon D^b(A) \to D^b(\Coh (\mathbb{X}))$ 
is a quasi-invertible functor which defines a triangulated equivalence. 
Then $FM$ is a vector  bundle.
\end{prop}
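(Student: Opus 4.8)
The plan is to read off the possible shapes of $FM$ from the Lenzing--Skowro\'nski classification recalled above and to eliminate all of them except the vector-bundle one by using that $M$ is sincere. Since $A$ is representation-infinite quasi-tilted of sheaf type and $M$ is indecomposable, $M$ lies in exactly one of the five subcategories $\md^l_0(A),\md_+(A),\md^c_0(A),\md_-(A),\md^r_0(A)$; in each of them $FM$ is, up to the indicated cohomological shift, either an object of $\Coh\mathbb{X}_+$ (a vector bundle) or of $\Coh\mathbb{X}_0$ (a sheaf of finite length). It therefore suffices to show that $M$ does not lie in any of the three ``finite length'' classes $\md^l_0(A),\md^c_0(A),\md^r_0(A)$, and then to check that the shift is the one landing $FM$ in $\Coh\mathbb{X}_+$, that is, that $M\in\md_+(A)$.

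Next I would extract from sincerity the key nonvanishings. Write $A=P_1\oplus\cdots\oplus P_n$ with the $P_i$ indecomposable projective and $DA=I_1\oplus\cdots\oplus I_n$ with the $I_j$ indecomposable injective. For an indecomposable module $M$, sincerity means exactly that $\Hom_A(P_i,M)\ne 0$ and $\Hom_A(M,I_j)\ne 0$ for every $i,j$, since these dimensions compute the composition multiplicities of the simples in $M$. As $\Hom_A$ agrees with $\Hom_{D^b(A)}$ on modules and $F$ is a triangle equivalence, we get $\Hom_{D^b(\Coh\mathbb{X})}(FP_i,FM)\ne 0$ and $\Hom_{D^b(\Coh\mathbb{X})}(FM,FI_j)\ne 0$ for all $i,j$. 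Here $T:=FA=\bigoplus_iFP_i$ is a tilting object of $D^b(\Coh\mathbb{X})$ whose indecomposable summands, by the classification, are the vector bundles making up $T_+$ and the finite-length sheaves making up $T'_0\oplus T''_0$; dually, each $FI_j$ is the image of a summand of $T$ under the Serre functor of $D^b(\Coh\mathbb{X})$ --- twist by the dualising element, then shift --- so $F(DA)$ again consists of vector bundles and finite-length sheaves, displaced by one in degree.

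Suppose for contradiction that $FM$ is (a shift of) a sheaf of finite length. Since $M$, hence $FM$, is indecomposable and $\Coh\mathbb{X}_0=\coprod_x\mathcal{T}_x$ is the coproduct of its tubes, which are pairwise orthogonal for both $\Hom$ and $\Ext^1$, $FM$ lies in a single tube $\mathcal{T}_x$. I would then play the nonvanishings above against the three basic vanishing facts in $\Coh\mathbb{X}$: there are no nonzero maps from torsion sheaves to bundles, so $\Hom(\Coh\mathbb{X}_0,\Coh\mathbb{X}_+)=0$; Serre duality gives $\Ext^1_{\Coh\mathbb{X}}(E,Y)=0$ whenever $E$ is a bundle and $Y$ has finite length; and $\Hom$ as well as $\Ext^1$ between two distinct tubes vanish. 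Orthogonality of the tubes forces every finite-length summand of $T$ --- and, since the Serre functor preserves tubes, of $F(DA)$ as well --- into the one tube $\mathcal{T}_x$; and for each bundle summand $E$ of $T$ the surviving nonvanishing $\Hom_{D^b}(E,FM)\ne 0$ together with $\Ext^1(E,Y)=0$ pins down the degree of $FM$ relative to that of $E$, so bundle summands of $T$ sitting in different degrees would already collide. The most delicate case is the one in which $T$ is a tilting \emph{bundle} concentrated in a single degree; here one must use that the bundles $T_+$ must still generate each tube $\mathcal{T}_{x'}$ with $x'\ne x$, and that the dual requirement $\Hom_{D^b}(FM,FI_j)\ne 0$, rewritten by Serre duality, again asks for a nonzero map from a bundle \emph{into} $\mathcal{T}_x$, contradicting $\Hom(\Coh\mathbb{X}_0,\Coh\mathbb{X}_+)=0$. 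The same bookkeeping eliminates the two shifted finite-length classes and fixes the shift, so $M\in\md_+(A)$ and $FM$ is a vector bundle.

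The heart of the matter --- and the step I expect to struggle with --- is exactly this last one: converting the four families of $\Hom$-nonvanishings (out of the projective summands of $T$, into the injective summands of $F(DA)$, spread over consecutive degrees) into a genuine obstruction once $FM$ has been confined to a single tube. This is where one really needs that $A$ is quasi-tilted, so that $\md A$ occupies only consecutive $\Coh\mathbb{X}$-degrees and the positions of $T$ and $F(DA)$ are tightly controlled, and that $A$ is representation-infinite, so that $\Coh\mathbb{X}_0$ is an entire $\mathbb{P}^1$-family of mutually orthogonal tubes, none of which is large enough to ``support'' a generating tilting object; dropping either hypothesis, the confinement of $FM$ to a tube is no longer a contradiction.
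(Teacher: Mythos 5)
Your skeleton is the same as the paper's (reduce to the Lenzing--Skowro\'nski five-class description of $\ind A$ and rule out the three finite-length classes using sincerity), but the proof is not actually carried out: you yourself flag the decisive step --- turning the Hom-nonvanishings into a contradiction once $FM$ has been confined to a single tube --- as the part you expect to struggle with, and the one concrete mechanism you offer for the hardest case is wrong. In the ``delicate case'' where $T$ is a tilting bundle concentrated in one degree you claim that $\Hom_{D^b(\Coh \mathbb{X})}(FM,FI_j)\neq 0$, rewritten by Serre duality, produces a nonzero map from a bundle \emph{into} the tube containing $FM$ and that this contradicts $\Hom(\Coh \mathbb{X}_0,\Coh \mathbb{X}_+)=0$. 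The vanishing goes the other way: only maps from finite-length sheaves to bundles vanish, while $\Hom(E,Y)$ for $E$ a bundle and $Y$ in a tube is typically nonzero. Since each $FI_j$ is the Serre twist-and-shift of a bundle summand of $T$, the nonvanishing $\Hom(FM,FI_j)\neq 0$ is Serre-dual to precisely such a bundle-to-torsion map, so the injective-side conditions you add give no obstruction at all, and the case you identified as the heart of the matter remains open.

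For comparison, the paper argues more directly and never uses maps into injectives: from $\Hom_A(P_i,M)\neq 0$ it writes $FP_i=Q_i[p]$ with $Q_i$ a summand of $T=T_0'[-1]\oplus T_+\oplus T_0''$, uses hereditariness of $\Coh \mathbb{X}$ to pin $p\in\{-1,0\}$ and $FM=Y[0]$ (with a separate branch for the almost concealed-canonical case $T=T_+\oplus T_0''$, where the shifted possibility $FM=Y[1]$ is killed by $\Ext^1_{\Coh \mathbb{X}}(\text{bundle},\text{finite length})=0$), and then excludes the finite-length classes because their \emph{defining} conditions in the Lenzing--Skowro\'nski proposition already require vanishings (e.g.\ class a) demands $\Hom(T_+,Y)=0$) that sincerity forces to be nonzero. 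That is the mechanism your write-up is missing, and it replaces all of your tube-orthogonality bookkeeping. Note, finally, that your instinct about the concealed-canonical case is sound: when $T_0=0$ the class-c) conditions impose nothing involving $T_+$, so sincerity alone does not visibly exclude a finite-length $FM$ there (the paper's ``similarly'' is thinnest at exactly this point), which is all the more reason that this case cannot be settled by the duality argument you sketch.
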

\begin{proof}
We consider $A=P_1 \oplus \dotsc \oplus P_n$ the decomposition of $A$ in indecompo\-sa\-ble modules.
By hypothesis, $D^b (A)$ is equivalent as a triangulated category to $D^b (\Coh (\mathbb{X}))$, 
so there is a tilting object $T$ in $D^b (\Coh (\mathbb{X}))$ such that 
$$(\End_{D^b(\Coh (\mathbb{X}))} (T))^{op} = A .$$

Since $M$ is sincere, for each $i \in \{1,\dotsc ,n\}$ it follow:
$$
0 \neq \Hom_A (P_i,M) = \Hom_{D^b(A)} (P_i,M) \cong \Hom_{D^b(\mathcal{H})} (FP_i,FM). 
$$

Let $FP_i = Q_i[p]$ and $FM = Y[m]$, where $Q_i , Y$ belongs to $\Coh (\mathbb{X})[0]$, for each $i \in \{1,\dotsc ,n\}$. 
Therefore, 
$$
\Hom_{D^b(\Coh (\mathbb{X}))} (FP_i,FM) = \Hom_{D^b(\Coh (\mathbb{X}))} (Q_i[p], Y[m]) \neq 0.
$$

For each $i \in \{1,\dotsc ,n\}$, $Q_i[p]$ is an indecomposable direct summands of $T$, 
where $T$ decompose into a direct sum $T'_0[-1] \oplus T_+ \oplus T''_0$, see \cite{len-sko}. 
That is, $Q_i[p]$ is isomorphic to an indecomposable direct summand of $T$ consequently $p \in \{-1,0\}$, 
and moreover $m=0$. 

We use the proposition of Lenzing-Skowro\'nski \cite{len-sko} above. 
If $FM \in \Coh (\mathbb{X})'_0$, we have that 
$$
\Hom_{D^b(\Coh (\mathbb{X}))} (T_+,FM) = \Hom_{\Coh (\mathbb{X})} (T_+,FM) \neq 0
$$
Then $FM[-1] \notin \md^l_0(B)$ eliminating the case a). 
Similarly, we eliminate the cases c) and e). 
This show that $FM$ is a bundle.

Case $A$ is a almost concealed-canonical algebra, see \cite{len-sko}, then $T=T_+ \oplus T''_0$. 
Implies that $p=0$ and $m \in \{0,1\}$, 
where $\Hom_{D^b(\Coh (\mathbb{X}))} (Q_i[p], Y[m]) \neq 0$. 
The case $FM=Y[0]$ is similar to the previous argument.
Let $FM=Y[1]$. If $FM \in \Coh (\mathbb{X})''_0$, we have 
$$
\Hom_{D^b(\Coh (\mathbb{X}))} (T_+, Y[1]) \cong \Ext_{\Coh (\mathbb{X})} (T_+,Y) \neq 0
$$
This contradicts that $T_+$ is a bundle and $Y$ is a finite length sheaf. 
Therefore $FM$ is bundle.

\end{proof}

The next step is to demonstrate that the canonical sincere module $M$, of a PHI algebra is exceptional. 
We recall the definition of exceptional object:

\begin{definition}[exceptional]
An object $E$ in a triangulated $K$-category $\mathcal{T}$ is called exceptional 
if $\End (E) = K$ and, $E$ not have auto-extensions, 
that is, $\Hom_{\mathcal{T}} (E,E[n])$ $= 0$ for each non-null integer $n$. 

Corresponding, let $A$ a finite dimensional $K$-algebra, 
the $A$-module $E$ is called exceptional if $\End (E) = K$, and $\Ext_{A}^1 (E,E) = 0$.
\end{definition}

Again, the next proposition is not specific to incidence algebras. 

\begin{prop} \label{excep}
If $A$ is a representation-infinite quasi-tilted algebra of domestic-sheaf type with a sincere indecomposable module $M$, 
then $M$ is exceptional.
\end{prop}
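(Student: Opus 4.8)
The plan is to reduce the statement, via Proposition \ref{bundle} and the derived equivalence, to a purely geometric fact about domestic weighted projective lines. Let $F\colon D^b(A)\to D^b(\Coh\mathbb{X})$ be the triangle equivalence coming from the hypothesis $A\cong'\Coh\mathbb{X}$. By Proposition \ref{bundle} the object $FM$ is a vector bundle on $\mathbb{X}$, and it is indecomposable because $M$ is and $F$ is an equivalence. Since a triangle equivalence preserves $\Hom$-spaces and commutes with shifts, $M$ is exceptional as an $A$-module precisely when $FM$ is exceptional as an object of $D^b(\Coh\mathbb{X})$; and as $FM$ lies in $\Coh\mathbb{X}[0]$ and $\Coh\mathbb{X}$ is hereditary, this is the same as saying $\End_{\Coh\mathbb{X}}(FM)=K$ and $\Ext^1_{\Coh\mathbb{X}}(FM,FM)=0$. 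Thus it suffices to prove the following claim: on a weighted projective line $\mathbb{X}$ of domestic type, every indecomposable vector bundle is exceptional.

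For the claim I would exploit that $\mathbb{X}$ is domestic, so that $\Coh\mathbb{X}$ has positive Euler characteristic and is derived equivalent to $\md K\widetilde{Q}$ for a Euclidean quiver $\widetilde{Q}$ (see \cite{gei-len}; equivalently, $\Coh\mathbb{X}$ is derived equivalent to a tame concealed, hence to a tame hereditary, algebra). Fix such an equivalence $G\colon D^b(\Coh\mathbb{X})\to D^b(\md K\widetilde{Q})$. The structural point is that, up to an overall shift, $G$ matches the two separating families of tubes: it carries $\Coh\mathbb{X}_0$, the sheaves of finite length, onto the regular $K\widetilde{Q}$-modules --- both are separating tubular families indexed by $\mathbb{P}^1_K$, with exactly $n$ exceptional tubes of ranks $p_1,\dots,p_n$. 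Hence the indecomposables of $D^b(\Coh\mathbb{X})$ that are not shifts of finite-length sheaves, i.e. the shifts of indecomposable vector bundles, correspond under $G$ to the indecomposables of $D^b(\md K\widetilde{Q})$ that are not shifts of regular modules, i.e. the shifts of preprojective and preinjective $K\widetilde{Q}$-modules. Preprojective and preinjective modules are directed, so they are bricks without self-extensions, that is, exceptional; exceptionality is preserved by triangle equivalences and by shifts, so every indecomposable vector bundle on $\mathbb{X}$ is exceptional. Applying the claim to $FM$ finishes the proof.

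The main obstacle is the structural input in the second paragraph --- controlling what the derived equivalence does to the finite-length sheaves. It is standard for domestic weighted projective lines (implicit in Geigle--Lenzing and made explicit in the work of Lenzing and coauthors), but it must be cited carefully: what is really needed is that, after a global shift, $G$ restricts to an equivalence between $\Coh\mathbb{X}_0$ and the category of regular $K\widetilde{Q}$-modules. If a cleaner citation is not available, I would instead argue on the Grothendieck group: for domestic $\mathbb{X}$ the Euler form on $K_0(\Coh\mathbb{X})$ is positive semidefinite with radical spanned by a class of rank zero, so the class of any vector bundle is a real root; an indecomposable object whose class is a real root, over a hereditary category with positive semidefinite Euler form, is necessarily a brick with vanishing self-extensions, which again gives that $FM$ is exceptional. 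I would lead with the first argument and keep the second in reserve.
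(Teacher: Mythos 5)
Your argument is correct and follows essentially the same route as the paper: reduce via Proposition \ref{bundle} and the triangle equivalence to the fact that every indecomposable vector bundle on a domestic weighted projective line is exceptional, which transports back to $M$. The only difference is that the paper simply cites Lenzing--Reiten for that fact, whereas you sketch a proof via the derived equivalence with a tame hereditary algebra (a valid route; note your reserve Euler-form argument is the shakier one, since an indecomposable whose class is a real root is not automatically a brick, but it is not needed).
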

\begin{proof}
We have a triangulated equivalence:
$$
F \colon D^b(A) \to D^b(\Coh (\mathbb{X})).
$$
We show that $\Ext^1_{\Coh (\mathbb{X})} (FM,FM)=0$, we have that
$$
\Ext^1_{\Coh (\mathbb{X})} (FM,FM) = \Hom_{D^b(\Coh (\mathbb{X}))} (FM,FM[1]) \cong \Ext^1_{A} (M,M).
$$

By proposition \ref{bundle}, $FM$ in $\Coh (\mathbb{X})_+$. 
Therefore, $FM$ is a bundle. 
Then we use the theorem of Lenzing-Reiten \cite{len-rei} which states that if $\Coh (\mathbb{X})$ of domestic type, 
then each indecomposable bundle is exceptional.

Hence, the sincere indecomposable module $M$ is exceptional.
\end{proof}

We state now a particular case of a result of  Lenzing- De la Pe\~na, which we will use in \ref{phiawild}.
\begin{teo} [Lenzing-De la Pe\~na \cite{len-pen}] 
Let $A$ be an algebra derived equivalent to a canonical algebra $\Lambda$ of weight type $(p_1, \dotsc , p_n)$. 
We fix a triangle-equivalence $D^b (A) \cong D^b(\Coh (\mathbb{X}))$, where $\Coh (\mathbb{X}) \cong' \Lambda$. 
Let $M$ be an exceptional $A$-module. If 
the module $M$ corresponds to a shift of  an exceptional vector bundle over $\mathbb{X}$, and 
$\mathcal{X}(\mathbb{X}) > 0$, 
then $A[M]$ is derived equivalent to the path algebra of a wild connected quiver.

\end{teo}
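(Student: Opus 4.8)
The plan is to push the hypotheses along the given derived equivalence to a tame hereditary algebra, then use the one-point extension machinery to recognise $A[M]$ as still piecewise hereditary and of quiver type, and finally detect wildness on the Tits form. First, since $\mathcal{X}(\mathbb{X})>0$ the category $\Coh(\mathbb{X})$ is of domestic type: the weight type is one of the ``platonic'' ones and the canonical algebra $\Lambda$ is tame concealed, so $D^b(\Coh(\mathbb{X}))\cong D^b(KQ_0)$ for a Euclidean quiver $Q_0$. Composing with the fixed equivalence $D^b(A)\cong D^b(\Coh(\mathbb{X}))$ gives $G\colon D^b(A)\to D^b(KQ_0)$, and I would note that $G$ carries the exceptional module $M$ to an exceptional object of $D^b(KQ_0)$, hence to a shift $N[j]$ of an exceptional $KQ_0$-module $N$; in particular $A$ is tilted from $KQ_0$.

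Next I would analyse $A[M]$ itself. The idempotent at the extension vertex $\star$ yields a recollement $D^b(A)\to D^b(A[M])\to D^b(K)$, and since $\star$ is a source the projective $P_\star$ is an exceptional object with $D^b(A[M])=\langle D^b(A),\langle P_\star\rangle\rangle$ as a semiorthogonal decomposition (one checks $\Hom_{D^b(A[M])}(P_\star,D^b(A))=0$ in all degrees directly, using that $P_\star$ is projective and supported on $\star$). Using the one-point extension criteria of Barot--De la Pe\~na--Lenzing (\cite{bar-len1},\cite{bar-len2},\cite{len-pen}), the fact that $M$ is exceptional and comes from an exceptional bundle $E$ lets one complete $E$ to a complete exceptional sequence of $D^b(\Coh(\mathbb{X}))$ whose enlargement by $P_\star$ is a complete exceptional sequence of $D^b(A[M])$ lying in a connected hereditary abelian category $\mathcal{H}'$ with tilting object, with $A[M]\cong\End(\text{tilting object})^{op}$. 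The point — and here is where domesticity enters, through $D^b(\Coh(\mathbb{X}))\cong D^b(KQ_0)$ from the first step — is that $\mathcal{H}'$ then possesses a nonzero projective object, so $\mathcal{H}'\cong\md KQ$ for a finite connected quiver $Q$ with one more vertex than $A$; hence $A[M]\cong' KQ$.

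Finally I would show $Q$ is wild by comparing Tits forms. Since $A[M]\cong' KQ$ the Euler forms agree, and I would compute $\langle\,,\rangle_{A[M]}$ on $K_0(A[M])=K_0(A)\oplus\mathbb{Z}[S_\star]$: projective resolutions of $A$-modules over $A[M]$ coincide with those over $A$, so the restriction to $K_0(A)$ is $\langle\,,\rangle_A\cong\langle\,,\rangle_{\Coh(\mathbb{X})}$, while $\langle[S_\star],[X]\rangle_{A[M]}=-\langle[M],[X]\rangle_A$, $\langle[X],[S_\star]\rangle_{A[M]}=0$ for $[X]\in K_0(A)$, and $\langle[S_\star],[S_\star]\rangle=1$. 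Let $q$ be the symmetrisation. Its restriction to $K_0(A)$ is the Tits form of $\Coh(\mathbb{X})$, which in the domestic case is positive semidefinite of corank one, with radical generated by the class $\delta=[S_x]$ of a simple sheaf at an ordinary point. But $(\delta,[S_\star])_q=-\langle[M],\delta\rangle_A=\pm\langle[E],[S_x]\rangle_{\Coh(\mathbb{X})}=\pm\rk(E)\neq0$, using $\Ext^1_{\Coh(\mathbb{X})}(E,S_x)=0$ for a bundle $E$. Thus $\delta$ is an isotropic vector of $q$ lying outside its radical, which is impossible for a positive semidefinite integral form; hence $q$ is indefinite, so $Q$ is neither Dynkin nor Euclidean, i.e.\ $Q$ is wild, and $A[M]$ is derived equivalent to the path algebra of a wild connected quiver.

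The hard part will be the second paragraph: ruling out that $A[M]$ is genuinely of sheaf type. Since a wild canonical algebra also has indefinite Tits form, the Tits-form computation above cannot separate the two possibilities, and one really must invoke the structural analysis of \cite{len-pen}, which uses $\mathcal{X}(\mathbb{X})>0$ essentially (it forces $\Coh(\mathbb{X})$ itself to be derived equivalent to a path algebra, so that gluing on one exceptional point cannot leave the quiver-type world — this would fail for tubular or wild $\mathbb{X}$).
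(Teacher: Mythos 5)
The paper does not prove this statement at all --- it is quoted as an external result from Lenzing--de la Pe\~na \cite{len-pen} --- so your text has to stand on its own, and its central step does not. Everything hinges on your second paragraph, which is asserted rather than proved. The recollement of $D^b(A[M])$ by $D^b(A)$ and $D^b(K)$ exists for \emph{every} one-point extension, so it carries no information; the two decisive claims --- (i) that $A[M]$ is piecewise hereditary at all, i.e.\ that the enlarged exceptional sequence really lives inside a hereditary abelian category $\mathcal{H}'$ with tilting object whose endomorphism algebra is $A[M]$, and (ii) that $\mathcal{H}'$ may be chosen with a nonzero projective, i.e.\ that $A[M]$ is of quiver type --- are exactly the content of \cite{len-pen} and of the Barot--Lenzing one-point-extension machinery, and you invoke those sources for them; your closing paragraph concedes this explicitly. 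As a proof of the theorem that is circular. Moreover, the heuristic you offer for (ii), namely that for domestic $\mathbb{X}$ ``gluing on one exceptional point cannot leave the quiver-type world,'' is false as a general principle: take $H$ tame hereditary of type $\widetilde{E}_6$ (domestic, quiver type, tubular type $(2,3,3)$) and $R$ an exceptional quasi-simple regular module in the rank-two tube --- i.e.\ a module corresponding to an exceptional \emph{torsion} sheaf; then $H[R]$ is a tubular algebra of type $(3,3,3)$, of sheaf type and certainly not derived equivalent to a wild path algebra. So the hypothesis that $M$ corresponds to a \emph{bundle} must do structural work in step (i)--(ii), not merely numerical work in the Tits form, and your argument never makes it do so.

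What is correct is your third paragraph: the Euler-form bookkeeping for the one-point extension ($\langle[S_\star],[X]\rangle=-\langle[M],[X]\rangle_A$, $\langle[X],[S_\star]\rangle=0$, $\langle[S_\star],[S_\star]\rangle=1$) and the computation $(\delta,[S_\star])_q=\pm\rk E\neq 0$ are right, and they prove the conditional statement: \emph{if} $A[M]\cong' KQ$, then $Q$ is wild and connected (they also rule out the tubular case, but not wild-sheaf type, as you note). What is missing is precisely the reduction to ``$A[M]\cong' KQ$''. A way to close the gap along your own lines: by domesticity pass to a tame hereditary algebra $H$ with $D^b(\Coh \mathbb{X})\cong D^b(H)$; an exceptional bundle corresponds, up to shift, to an exceptional \emph{preprojective or preinjective} (never regular) $H$-module $N$; by the Barot--Lenzing theorem on one-point extensions and derived equivalences (\cite{bar-len1}, \cite{bar-len2}) one gets $A[M]\cong' H[N]$, and after finitely many reflections/APR-tilts (staying tame hereditary, moving $N$ into a projective, dually an injective in the preinjective case, handled by coextension/duality) one reaches $H'[P]$ with $H'$ hereditary and $P$ projective, which is itself hereditary, i.e.\ a path algebra; then your quadratic-form argument finishes. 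Without some such argument (or an honest citation of \cite{len-pen}, which is what the paper does), the proposal does not prove the theorem.
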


\begin{lema} \label{phiawild}
Let $A$ be a representation-infinite quasi-tilted algebra of domestic-sheaf type with a sincere indecomposable module $M$. Then 
$A[M]$ is derived equivalent to the path algebra of a wild connected quiver.
\end{lema}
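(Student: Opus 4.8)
The plan is to assemble the three ingredients already established in this section and invoke the Lenzing--De la Pe\~na theorem stated just above. First I would record that, by hypothesis, $A$ is derived equivalent to $\Coh(\mathbb{X})$ with $\mathbb{X}$ a weighted projective line of domestic type, so $\mathcal{X}(\mathbb{X}) > 0$; moreover, since $A$ is of domestic-sheaf type and representation-infinite, it is derived equivalent to a canonical algebra $\Lambda$ of some weight type $(p_1,\dots,p_n)$ with $\mathcal{X}(\mathbb{X}) > 0$, so that $D^b(A) \cong D^b(\Coh(\mathbb{X})) \cong' D^b(\Lambda)$. Fix such a triangle-equivalence $F \colon D^b(A) \to D^b(\Coh(\mathbb{X}))$.

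Next I would apply Proposition~\ref{bundle} to conclude that $FM$ is a vector bundle on $\mathbb{X}$, i.e.\ $FM$ lies in $\Coh(\mathbb{X})_+$ up to no shift (Proposition~\ref{bundle} gives $FM \in \Coh(\mathbb{X})_+$ directly), and then apply Proposition~\ref{excep} to conclude that $M$ is an exceptional $A$-module; combining these, $FM$ is an exceptional vector bundle over $\mathbb{X}$. Thus $M$ corresponds, under the fixed triangle-equivalence, to a shift of an exceptional vector bundle over $\mathbb{X}$ (in fact to the bundle itself, placed in degree $0$). Since also $\mathcal{X}(\mathbb{X}) > 0$, all hypotheses of the Lenzing--De la Pe\~na theorem are met, and we conclude that $A[M]$ is derived equivalent to the path algebra of a wild connected quiver.

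The one point that needs a small amount of care is the compatibility of the various triangle-equivalences: Proposition~\ref{bundle} and Proposition~\ref{excep} are phrased for a functor into $D^b(\Coh(\mathbb{X}))$, while the Lenzing--De la Pe\~na theorem is phrased via an equivalence $D^b(A) \cong D^b(\Coh(\mathbb{X}))$ with $\Coh(\mathbb{X}) \cong' \Lambda$; I would note that these can be taken to be the same $F$, so that ``$FM$ is an exceptional vector bundle'' is literally the statement ``$M$ corresponds to a shift of an exceptional vector bundle'' required as input. I expect this bookkeeping — and the verification that $A$ being representation-infinite quasi-tilted of domestic-sheaf type indeed forces it into the scope of the canonical-algebra formulation with positive Euler characteristic — to be the only real obstacle; the rest is a direct citation chain. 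One should also observe that connectedness of the resulting quiver follows since $A[M]$ is connected (the new vertex $\star$ is attached to $A$ via the sincere, hence nonzero, module $M$), and derived equivalence preserves the block decomposition.
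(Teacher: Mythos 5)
Your proposal is correct and follows essentially the same route as the paper's own proof: apply Proposition~\ref{bundle} to see that $FM$ is a vector bundle, Proposition~\ref{excep} to see that $M$ is exceptional, and then invoke the Lenzing--De la Pe\~na theorem with $\mathcal{X}(\mathbb{X})>0$. The extra bookkeeping you mention (using the same equivalence $F$ throughout, and connectedness) is implicit in the paper's argument and does not change the proof.
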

\begin{proof}
We have a triangulated equivalence:
$$
F \colon D^b(A) \to D^b(\Coh (\mathbb{X})).
$$
By proposition \ref{excep}, $M$ is exceptional. Moreover, by proposition \ref{bundle}, $M$ associated, by $F$, with a exceptional vector bundle 
over $\mathbb{X}$.

We use the theorem of Lenzing-De la Pe\~na \cite{len-pen} above. We conclude that $A[M]$ is derived equivalent to the path algebra of a wild connected quiver.
\end{proof}

We can produce several examples of PHI algebras where $K\Delta[M]$ is of wild type.

\begin{example}
We give now an example of a PHI algebra $K\Delta$ of type $\widetilde{\mathbb{E}}_6$. Using our lemma, \ref{phiawild}, we see that  the one point extension of $K\Delta$ by the 
canonical sincere module $M$  is a PHI algebra of wild type. 

\begin{align*}
K\Delta :
\begin{aligned}
\xymatrix@R8pt@C10pt{
\bullet \ar[r]	& \bullet \ar[dd]		&& \bullet \ar[ll] \ar[dr]	&\\
				&						&&							& \bullet \ar[dl]\\
\bullet 		& \bullet \ar[rr] \ar[l]&& \bullet \ar@{.}[uu]		&
}
\end{aligned}
&&
K\Delta[M] :
\begin{aligned}
\xymatrix@R8pt@C10pt{
\bullet \ar[dr]	&						&& \star \ar[d] \ar[lll] \ar@{.}[dll]	&\\
				& \bullet \ar[dd]		&& \bullet \ar[ll] \ar[dr]			&\\
				&						&&									& \bullet \ar[dl]\\
\bullet 		& \bullet \ar[rr] \ar[l]&& \bullet \ar@{.}[uu]				&
}
\end{aligned}
\end{align*}
\end{example}


\bibliographystyle{amsplain} 
\bibliography{ref}  

\end{document}